\newcommand{\N}{{\mathds{N}}}
\newcommand{\Z}{{\mathds{Z}}}
\newcommand{\R}{{\mathds{R}}}
\newcommand{\C}{{\mathds{C}}}
\newcommand{\T}{{\mathds{T}}}
\newcommand{\U}{{\mathds{U}}}
\newcommand{\D}{{\mathfrak{D}}}
\newcommand{\A}{{\mathfrak{A}}}
\newcommand{\B}{{\mathfrak{B}}}
\newcommand{\bigslant}[2]{{\raisebox{.2em}{$#1$}\left/\raisebox{-.2em}{$#2$}\right.}}
\newcommand{\Nbar}{\overline{\N}}
\newcommand{\Lip}{{\mathsf{L}}}
\newcommand{\propinquity}[1]{{\mathsf{\Lambda}_{#1}}}
\newcommand{\dpropinquity}[1]{{\mathsf{\Lambda}^\ast_{#1}}}
\newcommand{\covpropinquity}[1]{{\mathsf{\Lambda}^{\mathrm{cov}}_{#1}}}
\newcommand{\Kantorovich}[1]{{\mathsf{mk}_{#1}}}
\newcommand{\Haus}[1]{{\mathsf{Haus}_{#1}}}
\newcommand{\StateSpace}{{\mathscr{S}}}
\newcommand{\MongeKant}{{Mon\-ge-Kan\-to\-ro\-vich metric}}
\newcommand{\Lqcms}{{\JLL} quantum compact metric space}
\newcommand{\Qqcms}[1]{{$#1$}--\gQqcms}
\newcommand{\gQqcms}{quasi-Leibniz quantum compact metric space}
\newcommand{\qcms}{quantum compact metric space}
\newcommand{\unit}{1}
\newcommand{\sa}[1]{{\mathfrak{sa}\left({#1}\right)}}
\newcommand{\UIso}[4]{{\mathsf{UIso}_{#1}\left({#2}\rightarrow{#3}\middle\vert{#4}\right)}}
\newcommand{\inner}[3]{{\left<{#1},{#2}\right>_{#3}}}
\newcommand{\JLL}{Lei\-bniz}
\newcommand{\bridge}[1]{#1} %{{\overrightarrow{#1}}}
\newcommand{\dom}[1]{{\operatorname*{dom}\left({#1}\right)}}
\newcommand{\codom}[1]{{\operatorname*{codom}\left({#1}\right)}}
\newcommand{\norm}[2]{{\left\|{#1}\right\|_{#2}}}
\newcommand{\tunnelset}[4]{{\text{\calligra Tunnels}\,\left[{#1}\stackrel{#3}{\longrightarrow}{#2}\middle\vert {#4} \right]}}
\newcommand{\tunnelsetltd}[3]{{\text{\calligra Tunnels}\,\left[{#1}\stackrel{#3}{\longrightarrow}{#2} \right]}}
\newcommand{\bridgereach}[2]{{\varrho\left(\bridge{#1}\middle\vert {#2}\right)}}
\newcommand{\bridgelength}[2]{{\lambda\left(\bridge{#1}\middle\vert{#2}\right)}}
\newcommand{\bridgenorm}[2]{{\mathsf{bn}_{ \bridge{#1}  }\left({#2}\right)}}
\newcommand{\targetsettunnel}[3]{{\mathfrak{t}_{#1}\left({#2}\middle\vert{#3}\right)}}
\newcommand{\worknote}[1]{} %{{#1}}
\newcommand{\tunnelreach}[2]{{\rho\left({#1}\middle\vert{#2}\right)}}
\newcommand{\tunnelmagnitude}[2]{{\mu\left({#1}\middle\vert{#2}\right)}}
\newcommand{\tunnelextent}[1]{{\chi\left({#1}\right)}}
\newcommand{\alg}[1]{{\mathfrak{#1}}}
\newcommand{\dil}[1]{{\mathrm{dil}\left({#1}\right)}}
\theoremstyle{plain}
\newtheorem{theorem}{Theorem}[section]
\newtheorem{corollary}[theorem]{Corollary}
\newtheorem{lemma}[theorem]{Lemma}
\newtheorem{proposition}[theorem]{Proposition}
\newtheorem{theorem-definition}[theorem]{Theorem-Definition}
\theoremstyle{definition}
\newtheorem{definition}[theorem]{Definition}
\newtheorem{convention}[theorem]{Convention}
\newtheorem{hypothesis}[theorem]{Hypothesis}
\theoremstyle{remark}
\newtheorem{remark}[theorem]{Remark}
\newtheorem{notation}[theorem]{Notation}
\renewcommand{\geq}{\geqslant}
\renewcommand{\leq}{\leqslant}
\numberwithin{equation}{section}
\begin{document}

\title{The Covariant Gromov-Hausdorff Propinquity}
\author{Fr\'{e}d\'{e}ric Latr\'{e}moli\`{e}re}
\email{frederic@math.du.edu}
\urladdr{http://www.math.du.edu/\symbol{126}frederic}
\address{Department of Mathematics \\ University of Denver \\ Denver CO 80208}

\date{\today}
\subjclass[2000]{Primary:  46L89, 46L30, 58B34.}
\keywords{Noncommutative metric geometry, Gromov-Hausdorff convergence, Monge-Kantorovich distance, Quantum Metric Spaces, Lip-norms, proper monoids, Gromov-Hausdorff distance for proper monoids, C*-dynamical systems.}
\thanks{This work is part of the project supported by the grant H2020-MSCA-RISE-2015-691246-QUANTUM DYNAMICS and grant \#3542/H2020/2016/2 of the Polish Ministry of Science and Higher Education.}

\begin{abstract}
  We extend the Gromov-Hausdorff propinquity to a metric on Lipschitz dynamical systems, which are given by strongly continuous actions of proper monoids on quantum compact metric spaces via Lipschitz morphisms. We prove that our resulting metric is zero between two Lipschitz dynamical systems if and only if there exists an equivariant full quantum isometry between. We apply our work to convergence of the dual actions on fuzzy tori to the dual actions on quantum tori. Our framework is general enough to also allow for the study of the convergence of continuous semigroups of positive linear maps and other actions of proper monoids.
\end{abstract}
\maketitle

%%%%%%%%%%%%%%%%%%%%%%%%%%%%%%%%%%%%%%%%%%%

%\Fontauri

\section{Introduction}

The Gromov-Hausdorff propinquity \cite{Latremoliere13,Latremoliere13b,Latremoliere14,Latremoliere15} is a complete metric, up to full quantum isometry, on classes of quantum compact metric spaces, which induces the same topology as the Gromov-Hausdorff distance \cite{Gromov81} on the class of classical compact spaces. The propinquity is designed as a new tool to discuss approximations of quantum spaces, with the dual purpose to provide a new framework for noncommutative metric geometry and to help the construction of physical theories over quantum spaces. In this paper, we introduce a covariant extension of the propinquity, designed to capture symmetries and additional geometric properties of quantum spaces. There are two main prospective applications for including actions of groups or monoids on quantum spaces in our noncommutative metric geometry framework. First, symmetries encoded by group actions are a crucial ingredient for physical theories, and thus, approximating a physical theory would naturally include approximation of its symmetries. In some instances, symmetries may actually include all the geometry of the underlying space, as for instance for the quantum torus. Second, noncommutative heat semigroups, usually modeled as semigroups of completely positive maps, can be used to define noncommutative geometries \cite{Sauvageot03}, or to model time-evolutions of physical systems. Thus, this paper is part of our program to extend the idea of convergence of spaces, as introduced by Edwards \cite{Edwards75} for compact metric spaces and Gromov \cite{Gromov81} for proper metric spaces, to noncommutative spaces, with our propinquity \cite{Latremoliere13,Latremoliere13b}, and then to their associated structures, such as modules \cite{Latremoliere16c,Latremoliere17a,Latremoliere18a}, and now symmetries and dynamics.

We showed in \cite{Latremoliere17c} that convergence for the Gromov-Hausdorff propinquity preserves symmetries, in a general sense. It is therefore natural to define a covariant propinquity, which now includes the convergence of symmetries themselves, by quantifying how far any two Lipschitz dynamical systems are, where Lipschitz dynamical systems are given by a quantum metric space and an action of a proper metric monoid on that space via Lipschitz morphisms. 

Quantum metric spaces' definition has evolved with time \cite{Connes89,Rieffel98a,Rieffel00,Latremoliere13,Latremoliere05b,Latremoliere12b,Latremoliere15}, in order to suit the development of noncommutative metric geometry, and in particular, the construction of an appropriate noncommutative analogue of the Gromov-Hausdorff distance \cite{Rieffel00,Li06,Rieffel10c}. We will use the following definition for the work in this paper, which is the definition for our entire framework based on the Gromov-Hausdorff propinquity \cite{Latremoliere15b}, which appears to be a good candidate for the sought-after noncommutative Gromov-Hausdorff distance. Our notion of a {\qcms} involves a choice of a particular ``quasi-Leibniz property'' as explained in the definitions below.

\begin{notation}
  Throughout this paper, for any unital C*-algebra $\A$, the norm of $\A$ is denoted by $\norm{\cdot}{\A}$, the space of self-adjoint elements in $\A$ is denoted by $\sa{\A}$, the unit of $\A$ is denoted by $\unit_\A$ and the state space of $\A$ is denoted by $\StateSpace(\A)$. We also adopt the convention that if a seminorm $\Lip$ is defined on some dense subspace of $\sa{\A}$ and $a\in\sa{\A}$ is not in the domain of $\Lip$, then $\Lip(a) = \infty$.
\end{notation}

\begin{definition}\label{qcms-def}
  A \emph{\qcms} $(\A,\Lip)$ is an ordered pair of a unital C*-algebra $\A$ and a seminorm $\Lip$, called an \emph{L-seminorm}, defined on a dense Jordan-Lie subalgebra $\dom{\Lip}$ of $\sa{\A}$, such that:
  \begin{enumerate}
    \item $\{ a \in \sa{\A} : \Lip(a) = 0 \} = \R\unit_\A$,
    \item the \emph{\MongeKant} $\Kantorovich{\Lip}$ defined for any two states $\varphi, \psi \in \StateSpace(\A)$ by:
      \begin{equation*}
        \Kantorovich{\Lip}(\varphi, \psi) = \sup\left\{ |\varphi(a) - \psi(a)| : a\in \dom{\Lip}, \Lip(a) \leq 1 \right\}
      \end{equation*}
      metrizes the weak* topology restricted to $\StateSpace(\A)$,
    \item $\Lip$ satisfies the $F$-quasi-Leibniz inequality, i.e. for all $a,b \in \dom{\Lip}$:
      \begin{equation*}
        \max\left\{ \Lip\left(\frac{a b + b a}{2}\right), \Lip\left(\frac{a b - b a}{2i}\right) \right\} \leq F(\norm{a}{\A},\norm{b}{\A},\Lip(a),\Lip(b))\text{,}
      \end{equation*}
      for some \emph{permissible} function $F$, i.e. a function $F : [0,\infty)^4\rightarrow [0,\infty)$, increasing when $[0,\infty)^4$ is endowed with the product order, and such that for all $x,y,l_x,l_y \geq 0$ we have $F(x,y,l_x,l_y) \geq x l_y + y l_x$,
    \item $\Lip$ is lower semi-continuous with respect to $\norm{\cdot}{\A}$.
  \end{enumerate}
We say that $(\A,\Lip)$ is \emph{Leibniz} when $F$ can be chosen to be $F:x,y,l_x,l_y \mapsto x l_y + y l_x$. More generally, if $\Lip$ satisfies the $F$-quasi-Leibniz inequality for some $F$ then $(\A,\Lip)$ is called a {\Qqcms{F}}.
\end{definition}
We note that a generalization of Definition (\ref{qcms-def}) to the locally compact setting appears in some of our research \cite{Latremoliere05b,Latremoliere12b}.

Quantum compact metric spaces form a category for the appropriate choices of morphisms. We refer to \cite{Latremoliere16b} for some observations on the definition of Lipschitz morphisms and some of their applications. The definition of quantum isometry relies on a key observation of Rieffel in \cite{Rieffel00}.

\begin{definition}
  Let $(\A,\Lip_\A)$ and  $(\B,\Lip_\B)$ be {\qcms s}.
  \begin{itemize}
  \item A positive unital linear map $\pi : \A \rightarrow \B$ is \emph{Lipschitz} when there exists $k\geq 0$ such that $\Lip_\B\circ\pi\leq k \Lip_\A$.
  \item A \emph{Lipschitz morphism} $\pi : \A \rightarrow \B$ is a unital *-endomorphism from $\A$ to $\B$ when there exists $k\geq 0$ such that $\Lip_\B\circ\pi\leq k \Lip_\A$.
  \item A \emph{quantum isometry} $\pi : (\A,\Lip_\A) \rightarrow (\B,\Lip_\B)$ is a *-epimorphism from $\A$ onto $\B$ such that for all $b \in \dom{\Lip_\A}$:
    \begin{equation*}
      \Lip_\B(b) = \inf\left\{ \Lip_\A(a) : \pi(a) = b \right\} \text{.}
    \end{equation*}
  \item A \emph{full quantum isometry} $\pi : (\A,\Lip_\A) \rightarrow (\B,\Lip_\B)$ is a *-isomorphism from $\A$ onto $\B$ such that $\Lip_\B\circ\pi = \Lip_\A$.
  \end{itemize}
\end{definition}

Among all the Lipschitz morphisms between {\qcms s}, full quantum isometries provide the appropriate notion of isomorphism for our purpose. 

The \emph{Gromov-Hausdorff propinquity} is a complete metric on the class of {\Qqcms{F}s} up to full quantum isometry, for any choice of a continuous permissible $F$. We refer to \cite{Latremoliere13b,Latremoliere14,Latremoliere15b} for the motivation, background, and main results on the Gromov-Hausdorff propinquity. The Gromov-Hausdorff propinquity is constructed using the dual analogue of isometric embeddings for {\qcms s}, which we call tunnels:
\begin{definition}
  Let $F$ be a permissible function, and let $(\A_1,\Lip_1)$ and $(\A_2,\Lip_2)$ be two {\Qqcms{F}s}. An \emph{$F$-tunnel} $\tau = (\D,\Lip,\pi_1,\pi_2)$ from $(\A_1,\Lip_1)$ to $(\A_2,\Lip_2)$ is a {\Qqcms{F}} $(\D,\Lip)$ and two quantum isometries $\pi_1 : (\D,\Lip)\twoheadrightarrow (\A_1,\Lip_1)$ and $\pi_2 : (\D,\Lip)\twoheadrightarrow(\A_2,\Lip_2)$. The \emph{domain} $\dom{\tau}$ of $\tau$ is $(\A_1,\Lip_1)$ while the codomain $\codom{\tau}$ of $\tau$ is $(\A_2,\Lip_2)$.
\end{definition}
In particular, tunnels give rise to isometric embeddings of the state spaces, though the isometries are of a very special kind, as dual maps to *-monomorphisms,  as illustrated in Figure (\ref{tunnel-fig}). Fixing a permissible function $F$ and two {\Qqcms{F}s} $(\A,\Lip_\A)$ and $(\B,\Lip_\B)$, the set of all $F$-tunnels from $(\A,\Lip_\A)$ to $(\B,\Lip_\B)$ is denoted by:
\begin{equation*}
  \tunnelsetltd{(\A,\Lip_\A)}{(\B,\Lip_\B)}{F} \text{.}
\end{equation*}

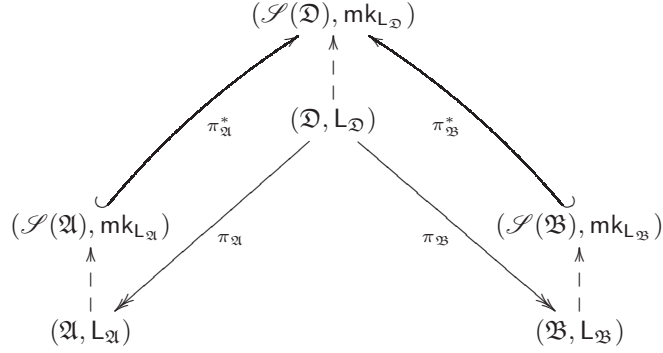
\begin{figure}[t]\label{tunnel-fig}
\begin{equation*}
  \xymatrix{
    & (\StateSpace(\D),\Kantorovich{\Lip_\D})  & \\
    & (\D,\Lip_\D) \ar@{>>}[ldd]^{\pi_\A} \ar@{>>}[rdd]_{\pi_\B} \ar@{-->}[u] & \\
    (\StateSpace(\A),\Kantorovich{\Lip_\A}) \ar@/^/@{^{(}->}[ruu]_{\pi_\A^\ast} & & (\StateSpace(\B),\Kantorovich{\Lip_\B}) \ar@/_/@{_{(}->}[luu]^{\pi_\B^\ast} \\
    (\A,\Lip_\A) \ar@{-->}[u] & & (\B,\Lip_\B) \ar@{-->}[u]
    }
\end{equation*}
\caption{A tunnel and the dual isometric embeddings of state spaces}
\begin{tabular}{lp{4cm}}
  $\hookrightarrow$ & isometry \\
  $\twoheadrightarrow$ & quantum isometry \\
  dotted arrows & duality relations \\
  $\pi^\ast : \varphi \mapsto \varphi\circ\pi$ & dual map\\
  $(\A,\Lip_\A)$, $(\B,\Lip_\B)$, $(\D,\Lip_\D)$ & {\Qqcms{F}s}
\end{tabular}
\end{figure}
There is a natural quantity associated with any tunnels which, in essence, measures how far apart the domain and codomain of a tunnel are for this particular choice of embedding.
\begin{definition}
  Let $(\A_1,\Lip_1)$ and $(\A_2,\Lip_2)$ be two {\qcms s}. The \emph{extent} $\tunnelextent{\tau}$ of a tunnel $\tau$ from $(\A_1,\Lip_1)$ to $(\A_2,\Lip_2)$ is the nonnegative number:
  \begin{equation*}
    \tunnelextent{\tau} = \max_{j \in \{1,2\}} \Haus{\Kantorovich{\Lip}}\left(\left\{ \varphi\circ\pi_j : \varphi \in \StateSpace(\A_j) \right\}, \StateSpace(\D)\right)\text{.}
  \end{equation*}
\end{definition}
We note that the extent of a tunnel is always finite. The propinquity is thus defined as follows:
\begin{definition}
  Let $F$ be a permissible function. For any two {\Qqcms{F}s} $(\A,\Lip_\A)$ and $(\B,\Lip_\B)$, the \emph{dual Gromov-Hausdorff $F$-propinquity} $\dpropinquity{F}((\A,\Lip_\A),(\B,\Lip_\B))$ is the nonnegative number:
  \begin{equation*}
    \dpropinquity{F}\left((\A,\Lip_\A),(\B,\Lip_\B)\right) = \inf\left\{ \tunnelextent{\tau} : \tau \in \tunnelsetltd{(\A,\Lip_\A)}{(\B,\Lip_\B)}{F} \right\}\text{.}
  \end{equation*}
\end{definition}
The propinquity enjoys the properties which a noncommutative analogue of the Gromov-Hausdorff distance ought to possess, though it was not a trivial task to unearth this definition.
\begin{theorem}
  Let $F$ be a permissible function. The $F$-propinquity $\dpropinquity{F}$ is a complete metric up to full quantum isometry on the class of {\Qqcms{F}s}. Moreover, the class map which associates, to any compact metric space $(X,d)$, its canonical {\Lqcms} $(C(X),\Lip_d)$ where $\Lip_d$ is the Lipschitz seminorm,  is an homeomorphism onto its range, when its domain is endowed with the Gromov-Hausdorff distance topology and its codomain is endowed with the topology induced by the dual propinquity.
\end{theorem}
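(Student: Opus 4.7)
The plan is to establish, in order, the pseudo-metric axioms for $\dpropinquity{F}$, then the coincidence property (distance zero iff full quantum isometry), then completeness, and finally the comparison with the classical Gromov-Hausdorff distance $\gh$. Symmetry is immediate: a tunnel $(\D,\Lip_\D,\pi_1,\pi_2)$ is also, by swapping $\pi_1$ and $\pi_2$, a tunnel in the opposite direction with the same extent, since the definition of $\tunnelextent{\cdot}$ is itself symmetric. The real work in the ``pseudo-metric'' step is the triangle inequality, for which I would compose two tunnels $\tau=(\D,\Lip_\D,\pi_1,\pi_2) \in \tunnelsetltd{(\A_1,\Lip_1)}{(\A_2,\Lip_2)}{F}$ and $\tau'=(\D',\Lip_{\D'},\pi_1',\pi_2') \in \tunnelsetltd{(\A_2,\Lip_2)}{(\A_3,\Lip_3)}{F}$ by taking the fibered product $\E = \{(d,d') \in \D \oplus \D' : \pi_2(d) = \pi_1'(d')\}$, equipped with the L-seminorm $(a,a') \mapsto \max\{\Lip_\D(a),\Lip_{\D'}(a')\}$. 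The $F$-quasi-Leibniz inequality passes through because $F$ is monotone in each variable and because the norm on $\E$ dominates each component norm; lower semi-continuity is preserved under maxima, and the canonical projections of $\E$ onto the outer factors are quantum isometries. The extent of the composed tunnel is controlled by $\tunnelextent{\tau}+\tunnelextent{\tau'}$ by a triangle-inequality argument in Hausdorff distance on the state space of $\E$.

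For the coincidence property, I would assume $\dpropinquity{F}((\A,\Lip_\A),(\B,\Lip_\B))=0$ and pick tunnels $\tau_n$ with $\tunnelextent{\tau_n} \to 0$. From each $\tau_n$ I would extract a ``target set'' map assigning to each $a \in \dom{\Lip_\A}$ with $\Lip_\A(a)\leq 1$ a candidate image in $\dom{\Lip_\B}$ with controlled $\Lip_\B$-seminorm and with small norm discrepancy, measured in terms of $\tunnelextent{\tau_n}$. The closed Lip-unit balls modulo scalars are norm-totally bounded, because $\Kantorovich{\Lip}$ metrizes the weak-* topology on the compact state space (Rieffel's characterization), so a noncommutative Arzel\`a-Ascoli-type extraction along a countable dense subset produces a candidate map $\A \to \B$ in the limit. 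Multiplicativity, self-adjointness, and the L-seminorm preservation in the limit follow from the $F$-quasi-Leibniz inequality, the lower semi-continuity of $\Lip_\B$, and the vanishing of the extent; a symmetric extraction in the other direction produces an inverse, yielding a full quantum isometry.

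Completeness follows the standard ``fast Cauchy subsequence plus inverse-limit'' blueprint. Given a Cauchy sequence $(\A_n,\Lip_n)$, I would pass to a subsequence and pick tunnels $\tau_n \in \tunnelsetltd{(\A_n,\Lip_n)}{(\A_{n+1},\Lip_{n+1})}{F}$ with $\sum_n \tunnelextent{\tau_n} < \infty$. Iterating the tunnel composition above yields a coherent family of tunnels between any $(\A_n,\Lip_n)$ and $(\A_m,\Lip_m)$. I would then construct a candidate limit {\Qqcms{F}} $(\A_\infty,\Lip_\infty)$ as a quotient of a suitable subalgebra of $\prod_n \A_n$ consisting of sequences whose coordinates are compatible through the tunnels up to summable error, endowed with the L-seminorm $\Lip_\infty(a) = \sup_n \Lip_n(a_n)$. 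Lower semi-continuity, the $F$-quasi-Leibniz property (using continuity of $F$), and the fact that $\Kantorovich{\Lip_\infty}$ metrizes the weak-* topology are verified by passing to limits in the analogous estimates for each $\A_n$, and natural projections from $\A_\infty$ into each $\A_n$ assemble into tunnels whose extents go to zero.

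For the classical comparison, the map $(X,d) \mapsto (C(X),\Lip_d)$ is continuous because an $\varepsilon$-approximate isometric embedding of $(X,d_X)$ and $(Y,d_Y)$ into a common compact metric space $(Z,d_Z)$ gives rise, by Gelfand duality, to the tunnel $(C(Z),\Lip_{d_Z},\pi_X,\pi_Y)$, whose extent is bounded by the Hausdorff distance between the images of $X$ and $Y$ in $\StateSpace(C(Z))=Z$. Conversely, if $\dpropinquity{F}$ between two commutative {\Lqcms s} $(C(X),\Lip_{d_X})$ and $(C(Y),\Lip_{d_Y})$ is less than $\varepsilon$, then a tunnel of extent less than $\varepsilon$ embeds $X$ and $Y$ isometrically into the (generally noncommutative) state space of $\D$; restricting to pure states, or more carefully, using the fact that classical state spaces consist entirely of pure states, produces an embedding of $X$ and $Y$ into a common compact metric space realizing their Hausdorff distance to within $\varepsilon$, whence $\gh(X,Y) \leq \varepsilon$. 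The most delicate step in this whole plan is the triangle inequality, because one must simultaneously preserve the $F$-quasi-Leibniz property (which is not automatic under arbitrary subalgebra constructions) and control the extent of the composed tunnel sharply; most of the other steps are then technical elaborations of this core construction.
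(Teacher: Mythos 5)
This theorem is quoted in the paper from the author's earlier works rather than proved here, but the paper does display the method actually used for its hardest ingredient: tunnels are composed only \emph{approximately}, on the direct sum $\D_1\oplus\D_2$ with the seminorm $\max\{\Lip_{\D_1}(d_1),\Lip_{\D_2}(d_2),\tfrac{1}{\varepsilon}\norm{\rho_\B(d_1)-\pi_\B(d_2)}{\B}\}$ (see Theorem (\ref{triangle-thm}), citing [Theorem 3.1] of the propinquity paper), which costs an extra $\varepsilon$ in the extent but is harmless for the triangle inequality. Your fibered product $\E=\{(d,d'):\pi_2(d)=\pi_1'(d')\}$ with the plain maximum seminorm has a genuine unaddressed gap exactly at the point you wave through: you must show that $(\E,\max\{\Lip_\D,\Lip_{\D'}\})$ is a {\qcms}, and in particular that the pairs with \emph{both} coordinates Lipschitz \emph{and} exactly matching under $\pi_2$ and $\pi_1'$ are dense in $\sa{\E}$. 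Approximating $(d,d')\in\sa{\E}$ coordinatewise by Lipschitz elements destroys the matching condition, and repairing it requires a Lipschitz lift of a prescribed Lipschitz element of $\A_2$ that is simultaneously norm-close to a prescribed, possibly non-Lipschitz, preimage; quantum isometries give lifts with controlled seminorm and controlled norm, but not lifts anchored near an arbitrary preimage. This exact-matching obstruction (together with the fact that inf-convolution/quotient constructions need not preserve the Leibniz property) is precisely why the established construction relaxes the matching into a penalty term, for which the domain is simply $\dom{\Lip_{\D_1}}\oplus\dom{\Lip_{\D_2}}$ and all axioms are checkable. Your claim that the two projections of $\E$ are quantum isometries also needs an argument (a two-step lifting through $\pi_2$ and $\pi_1'$), though that part can be supplied. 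Similarly, in the completeness step you gloss over the main delicacy of the actual proof: the limit is obtained by a quotient-type construction, and quotients do not automatically inherit the $F$-quasi-Leibniz property, so ``passing to limits in the analogous estimates'' is not enough as stated.

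The classical comparison also contains an error: the state space of $C(X)$ does \emph{not} consist of pure states (it is the set of all Borel probability measures on $X$; only the Dirac measures are pure). Consequently, knowing that the extent of a tunnel is small, i.e.\ that the full state spaces $\StateSpace(C(X))$ and $\StateSpace(C(Y))$ are Hausdorff-close inside $(\StateSpace(\D),\Kantorovich{\Lip_\D})$, does not by itself give Hausdorff-closeness of the point sets $X$ and $Y$ (extreme points are not stable under Hausdorff perturbation of convex sets). The correct argument uses Lipschitz test functions such as $d_X(\cdot,x)$, with $\Lip_{d_X}\leq 1$, and their images through the tunnel (target sets) to produce, for each $x\in X$, a point $y\in Y$ at controlled distance; this is the route taken in the cited works. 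The forward direction of your comparison (an $\varepsilon$-approximate common embedding yields a tunnel $(C(Z),\Lip_{d_Z},\pi_X,\pi_Y)$ of small extent) is essentially the standard argument and is fine.
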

Examples of interesting convergences for the propinquity include fuzzy tori approximations of quantum tori \cite{Latremoliere13c}, continuity for certain perturbations of quantum tori \cite{Latremoliere15c}, unital AF algebras with faithful tracial states \cite{Latremoliere15d}, continuity for noncommutative solenoids \cite{Latremoliere16}, and Rieffel's work on approximations of spheres by full matrix algebras \cite{Rieffel15}, among other examples. Moreover, we prove \cite{Latremoliere15b} an analogue of Gromov's compactness theorem.

We make two remarks regarding the construction of the propinquity which will play a role in this paper. First, it is possible to restrict which tunnels between any two {\qcms s} are used in the definition of the propinquity, besides fixing a quasi-Leibniz inequality via the choice of a permissible function. This may prove helpful, for instance, to impose additional conditions such as the strong Leibniz condition of Rieffel \cite{Rieffel10c}. We refer to \cite{Latremoliere14} for the conditions one needs to impose on whichever choice of tunnels for the construction of the propinquity to give a metric up to full quantum isometry. We will encounter this feature  of our construction in the present paper as well.

The second remark concerns the matter of actually constructing tunnels. The key source for tunnels in our work are objects which we call \emph{bridges} \cite{Latremoliere13}, after a more general notion from Rieffel \cite{Rieffel00}. We recall from \cite{Latremoliere13} the definition of bridges in Definition (\ref{bridge-def}) in this paper, followed by the definition of the length of bridges between {\qcms s}. An important observation from \cite{Latremoliere13c} is that bridges provide natural tunnels whose extend is bounded by twice the length of the original bridge, and in fact, in all the examples of convergence for the propinquity cited above, bridges are used. It is actually possible to work only with tunnels arising from bridges in that way --- this was in fact how our work begun in \cite{Latremoliere13} --- though, in particular, the dual completeness of the dual propinquity arises from allowing for the more general framework we have described above.

We prove in this paper that we can construct a distance on the class of Lipschitz dynamical systems which is null if and only if there exists an \emph{equivariant} full quantum isometry between Lipschitz dynamical systems, i.e. a full quantum isometry between the quantum spaces and an isometric isomorphism between the monoids, such that the actions are intertwined by these maps. We illustrate an application of the covariant propinquity by proving that quantum tori with their dual actions are limits of fuzzy tori with their own dual actions, strengthening the results in \cite{Latremoliere13c} by now including convergence of the symmetries. The natural question of completeness of the covariant propinquity is addressed in the companion paper \cite{Latremoliere18c}.

We wish to apply the covariant propinquity to various situations in future works. First, there are other examples of approximations of quantum spaces whose geometry arises from their symmetries, such as convergence of full matrix algebras to the $2$-sphere, where the geometry is given by the action of $SU(2)$ \cite{Rieffel01,Rieffel10c,Rieffel15}. Second, our covariant propinquity provides a distance on dynamical systems, and thus we wish to study the function which, to a C*-dynamical system, associates its C*-crossed-product, from the covariant propinquity to the propinquity. This works will require the study of the metric geometry of C*-crossed-products. Note that convergence of C*-crossed-product together with their dual actions, in the sense of the covariant propinquity, ought to be tightly connected with the properties of the underlying dynamical system. Another direction, for which the theory in this paper is designed at an appropriate level of generality, is the study of convergence of quantum spaces together with actions of monoids via positive unital maps, with a sight in particular on convergence of continuous semigroups of completely positive maps. This opens a new route for the study of convergence of geometries, for such geometries that arise via such semigroups via their associated Dirichlet forms and differential calculi.

We begin our paper with the notion of convergence of proper monoids, i.e. a monoid with a left invariant metric whose closed balls are compact. Our Definition (\ref{near-iso-def}) is the natural way to define an approximate isometric isomorphism in the spirit of Gromov-Hausdorff convergence. However, it is not well-suited to define a metric between monoids, as difficulties arise when trying to prove the triangle inequality. Instead, we introduce Definition (\ref{almost-iso-def}), which we prove is, in some sense, equivalent to Definition (\ref{near-iso-def}), though it is easier to describe, and does lead us to the definition of a Gromov-Hausdorff type metric between proper monoids, up to isometric isomorphism of monoids. We prove that our metric between proper monoids dominates the pointed Gromov-Hausdorff distance between proper metric spaces. 

We then turn to the construction of the covariant propinquity. Our construction merges our notion of tunnel form \cite{Latremoliere13b} and our notion of almost isometry. We prove that the resulting covariant tunnels can be (almost) composed as in \cite{Latremoliere14}, and that indeed, our new distance is up to \emph{equivariant} full quantum isometry. We conclude our paper with an application to quantum and fuzzy tori, where we prove convergence of the dual actions of closed subgroups of tori on their quantum counterpart for the covariant propinquity. This section outlines how to extend the notion of bridges \cite{Latremoliere13} to our covariant framework.

\section{A Gromov-Hausdorff distance for proper monoids}

We define a covariant Gromov-Hausdorff distance on the class of proper monoids endowed with a left invariant metric. A \emph{monoid} is an associative magma with a neutral element, referred to as the identity element of the monoid. On the other hand, a \emph{proper metric space} is a metric space whose closed balls are all compact.

\begin{definition}
  A \emph{metric monoid} $(G,\delta)$ (resp. group) is a monoid (resp. a group) $G$ and a left invariant metric $\delta$ on $G$ for which the multiplication is continuous (resp. the multiplication and the inverse function are continuous).

The metric monoid (resp. group) is \emph{proper} when all its closed balls are compact.
\end{definition}

\begin{definition}
  A \emph{(metric monoid) morphism} $\pi : G \rightarrow H$ is a map such that:
  \begin{itemize}
    \item $\pi$ maps the identity element of $G$ to the identity element of $H$,
    \item $\forall g,h \in G \quad \pi(g h) = \pi(g) \pi(h)$,
    \item $\pi$ is continuous.
  \end{itemize}
\end{definition}

\begin{notation}
  For a metric space $(X,\delta)$, $x\in X$ and $r\geq 0$, the closed ball in $(X,\delta)$ centered at $x$, of radius $r$, is denoted as $X_\delta[x,r]$, or simply $X[x,r]$.
  If $(G,\delta)$ is a metric monoid with identity element $e \in G$, and if $r \geq 0$, then $G[e,r]$ is denoted as $G[r]$. 
\end{notation}

\begin{remark}
  A proper metric space is always complete and separable.
\end{remark}

We define our distance between two proper metric monoids $(G_1,\delta_1)$ and $(G_2,\delta_2)$ by measuring how far a given pair of maps $\varsigma_1:G_1\rightarrow G_2$ and $\varsigma_2 : G_2\rightarrow G_1$ is from being an isometric isomorphism and its inverse. There are at least two ways to do so. The following definition will serve this purpose well for us.

\begin{definition}\label{almost-iso-def}
Let $(G_1,\delta_1)$ and $(G_2,\delta_2)$ be two metric monoids with respective identity elements $e_1$ and $e_2$. An \emph{$r$-local $\varepsilon$-almost isometric isomorphism} $(\varsigma_1,\varsigma_2)$, for $\varepsilon \geq 0$ and $r \geq 0$,  is an ordered pair of maps $\varsigma_1 : G_1[r] \rightarrow G_2$ and $\varsigma_2 : G_2[r] \rightarrow G_1$ such that for all $\{j,k\} = \{1,2\}$:
\begin{equation*}
\forall g,g' \in G_j[r] \quad \forall h \in G_k[r] \quad \left| \delta_k(\varsigma_j(g)\varsigma_j(g'),h) - \delta_j(gg',\varsigma_k(h))\right| \leq \varepsilon\text{,}
\end{equation*}
and
\begin{equation*}
\varsigma_j(e_j) = e_k \text{.}
\end{equation*}
 The set of all $r$-local $\varepsilon$-almost isometric isomorphism is denoted by:
\begin{equation*}
  \UIso{\varepsilon}{(G_1,\delta_1)}{(G_2,\delta_2)}{r} \text{.}
\end{equation*}
\end{definition}

\begin{convention}
  Write $f_{\big|D}$ for the restriction of a function $f$ to some subset $D$ of its domain. Let $\varsigma : D_1 \subseteq G_1 \rightarrow G_2$ and $\varkappa : D_2 \subseteq G_2 \rightarrow G_1$ with $G_j[r] \subseteq D_j$ for some $r \geq 0$ and $j \in \{1,2\}$. For any $\varepsilon \geq 0$, we will simply write $(\varsigma,\varkappa) \in \UIso{\varepsilon}{(G_1,\delta_1)}{(G_2,\delta_2)}{r}$ to mean:
  \begin{equation*}
    \left(\varsigma_{\big|G_1[r]},\varkappa_{\big| G_2[r]}\right) \in \UIso{\varepsilon}{(G_1,\delta_1)}{(G_2,\delta_2)}{r} \text{.}
  \end{equation*}
\end{convention}

Our covariant Gromov-Hausdorff distance over the class of proper metric monoids is then defined along the lines Gromov's distance. The role of the $\frac{\sqrt{2}}{2}$ bound will become apparent when we prove the triangle inequality for $\Upsilon$.

\begin{definition}\label{group-GH-def}
The \emph{Gromov-Hausdorff monoid distance} $\Upsilon((G_1,\delta_1),(G_2,\delta_2))$ between two proper metric monoids $(G_1,\delta_1)$ and $(G_2,\delta_2)$ is given by:
  \begin{multline*}
    \Upsilon((G_1,\delta_1),(G_2,\delta_2)) = \\
    \min\left\{ \frac{\sqrt{2}}{2}, \inf\left\{ \varepsilon > 0 \; : \; \UIso{\varepsilon}{(G_1,\delta_1)}{(G_2,\delta_2)}{\frac{1}{\varepsilon}} \not= \emptyset \right\}\right\} \text{.}
  \end{multline*}
\end{definition}

\begin{remark}\label{almost-isoiso-exists-rmk}
  Let $(G,\delta_G)$ and $(H,\delta_H)$ be two proper monoids with respective identity elements $e_G \in G$ and $e_H \in H$. Set $\varsigma(g) = e_H$ and $\varkappa(h) = e_G$ for all $g\in G$, $h\in H$. Note that if $g,g' \in G\left[\frac{\sqrt{3}}{3}\right]$ and $h \in H\left[\frac{\sqrt{3}}{3}\right]$:
  \begin{equation*}
    \left|\delta_H(\varsigma(g)\varsigma(g'),h) - \delta_G(g g', \varkappa(h))\right| = \left|\delta_H(e_H,h) - \delta_G(g g', e_G)\right| \leq \sqrt{3} = \frac{1}{\frac{\sqrt{3}}{3}}
  \end{equation*}
and similarly with $G$ and $H$ switched so $(\varsigma,\varkappa) \in \UIso{\sqrt{3}}{(G,\delta_G)}{(H,\delta_H)}{\frac{\sqrt{3}}{3}}$. In particular $\inf\left\{ \varepsilon > 0:\UIso{\varepsilon}{(G,\delta_G)}{(H,\delta_H)}{\frac{1}{\varepsilon}}\right\} \leq \sqrt{3}$. Of course, $\Upsilon$ is always finite (no more than $\frac{\sqrt{2}}{2}$) but it is worth noticing that it is built from finite quantities only.
\end{remark}

We now proceed to prove that $\Upsilon$ is indeed a metric up to isometric isomorphism of proper monoids. As a first step, we explicit properties of almost isometric isometries, which make apparent their relationship with the desired algebraic and isometric properties. We begin with a simple observation. If $(G,\delta)$ is a metric monoid with identity element $e$, and if $g,g' \in G[r]$ for some $r\geq 0$, then $\delta(gg',e)\leq\delta(gg',g) + \delta(g,e) \leq 2r$ by left invariance of $\delta$. Therefore, the following definition makes sense.

\begin{definition}\label{near-iso-def}
Let $(G_1,\delta_1)$ and $(G_2,\delta_2)$ be two metric monoids and $r\geq 0$, $\varepsilon \geq 0$. An ordered pair $\varsigma_1:G_1[r+\varepsilon]\rightarrow G_2$ and $\varsigma_2 : G_2[r+\varepsilon]\rightarrow G_1$ of functions is called \emph{$\varepsilon$-near $r$-local isometric isomorphism} when, for all $\{j,k\} = \{1,2\}$:
\begin{enumerate}
\item $\forall g,g' \in G_j\left[\frac{r}{2}\right] \quad \delta_k(\varsigma_j(g)\varsigma_j(g'),\varsigma_j(g g')) \leq \varepsilon$,
\item $\forall g,g' \in G_j[r] \quad \left|\delta_k(\varsigma_j(g),\varsigma_j(g')) - \delta_j(g,g')\right| \leq \varepsilon$,
\item $\forall g \in G_j[r] \quad \delta_j(\varsigma_k(\varsigma_j(g)),g) \leq \varepsilon$,
\item $\delta_k(\varsigma_j(e_j),e_k) \leq \varepsilon$.
\end{enumerate}
If $\varsigma_1$ and $\varsigma_2$ map identity elements to identity elements, then the near isometric isomorphism is called \emph{unital}.
\end{definition}

We now relate our two notions of near isometries and almost isometries. Informally, Definition (\ref{almost-iso-def}) and Definition (\ref{near-iso-def}) provide two different yet tightly related means to measure the same general property --- how far a pair of maps is from being an isometric isomorphism and its inverse. Our motivation for having both notions at our disposal is that Definition (\ref{almost-iso-def}) is well-behaved under composition, which in turn will prove helpful for proving the triangle inequality for $\Upsilon$, while Definition (\ref{near-iso-def}) will prove helpful when we want to use various natural approximations of the notions of isometries or isomorphisms.

\begin{lemma}\label{almost-isoiso-lemma}
Let $(G_1,\delta_1)$, $(G_2,\delta_2)$ be two metric monoids and $\varepsilon \geq 0$, $r > 0$. 
\begin{enumerate}
 \item If $(\varsigma_1,\varsigma_2) \in \UIso{\varepsilon}{(G_1,\delta_1)}{(G_2,\delta_2)}{r}$ then for all $\{j,k\} = \{1,2\}$, if $r'=\max\{0,r-\varepsilon\}$ then:
\begin{enumerate}
\item $\forall g \in G_j[r] \quad \forall h \in G_k[r] \quad \left| \delta_k(\varsigma_j(g),h) - \delta_j(g,\varsigma_k(h)) \right| \leq \varepsilon$,
\item $\forall t \in [0,r] \quad \forall g \in G_j[t] \quad \varsigma_j(g) \in G_j[t + \varepsilon]$,
\item $\forall g \in G_j[r'] \quad \delta_j(\varsigma_k\circ\varsigma_j(g),g) \leq \varepsilon$,
\item $\forall g,g' \in G_j\left[\frac{r'}{2}\right] \quad \delta_k(\varsigma_j(g)\varsigma_j(g'),\varsigma_j(gg')) \leq 2\varepsilon$,
\item $\forall g,g' \in G_j[r'] \quad \left|\delta_k(\varsigma_j(g),\varsigma_j(g')) - \delta_j(g,g')\right| \leq 2\varepsilon$;
\end{enumerate}
in particular $(\varsigma_1,\varsigma_2)$ is a unital $2\varepsilon$-near $r'$-local isometric isomorphism;
\item if $(\varsigma_1,\varsigma_2)$ is an $r$-local unital $\varepsilon$-near isometric isomorphism, then:
\begin{equation*}
(\varsigma_1,\varsigma_2) \in \UIso{3\varepsilon}{(G_1,\delta_1)}{(G_2,\delta_2)}{\frac{r}{2}} \text{;}
\end{equation*}
\item if $(\varsigma_1,\varsigma_2)$ is a $\varepsilon$-near isometric isomorphism, then:
\begin{equation*}
\UIso{6\varepsilon}{(G_1,\delta_1)}{(G_2,\delta_2)}{\frac{r}{2}}\not=\emptyset\text{.}
\end{equation*}
\end{enumerate}
\end{lemma}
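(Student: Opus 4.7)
My plan is to prove the three parts sequentially, using Part (1) to unpack the almost-isometry condition into near-isometry style estimates, then Part (2) to go in the reverse direction, and finally Part (3) as a bootstrapping of Part (2) after modifying $\varsigma$ to be unital.

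For Part (1), the entire argument rests on specializing the defining inequality of $\UIso{\varepsilon}{(G_1,\delta_1)}{(G_2,\delta_2)}{r}$ to well-chosen values of $g'$ and $h$. Setting $g' = e_j$ and using the unital condition $\varsigma_j(e_j) = e_k$ collapses the product $\varsigma_j(g)\varsigma_j(g')$ to $\varsigma_j(g)$, producing (a); specializing (a) to $h = e_k$ and noting $\varsigma_k(e_k)=e_j$ yields (b); and specializing (a) to $h = \varsigma_j(g)$, admissible because $g \in G_j[r']$ combined with (b) forces $\varsigma_j(g) \in G_k[r'+\varepsilon] = G_k[r]$, yields (c). For (d), I will apply the defining inequality with $h = \varsigma_j(gg')$ (which lies in $G_k[r]$ for the same reason), obtaining $|\delta_k(\varsigma_j(g)\varsigma_j(g'),\varsigma_j(gg')) - \delta_j(gg',\varsigma_k(\varsigma_j(gg')))| \leq \varepsilon$, and then invoke (c) to replace $\varsigma_k(\varsigma_j(gg'))$ by $gg'$ at a cost of $\varepsilon$, for a total of $2\varepsilon$. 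For (e), I apply (a) with $h = \varsigma_j(g')$ and close the estimate with (c) again, picking up $2\varepsilon$. Unitality transfers verbatim from the defining property of $\UIso{\varepsilon}{\cdot}{\cdot}{r}$.

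For Part (2), the argument is a triangle-inequality interpolation: given $g, g' \in G_j[r/2]$ and $h \in G_k[r/2]$, I chain
\begin{equation*}
\varsigma_j(g)\varsigma_j(g') \;\longrightarrow\; \varsigma_j(gg') \;\longrightarrow\; \varsigma_j(\varsigma_k(h)) \;\longrightarrow\; h
\end{equation*}
applying the three near-isometry properties (1), (2), (3) in turn to pick up $\varepsilon$ at each step, and then compare with $\delta_j(gg',\varsigma_k(h))$. The subtle point is the middle step, where I need $\varsigma_k(h) \in G_j[r]$ to invoke property (2); this follows by applying property (2) with the second argument $e_k$ together with unitality, yielding $\delta_j(\varsigma_k(h), e_j) \leq r/2 + \varepsilon$, which is within $G_j[r]$ provided $\varepsilon \leq r/2$ (and otherwise the inequality holds trivially since distances within a ball of radius $r/2$ never exceed $r < 3\varepsilon$). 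Reversing the chain gives the symmetric bound, producing $3\varepsilon$.

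The main obstacle will be Part (3), where a general (non-unital) $\varepsilon$-near isomorphism must be upgraded to produce a $6\varepsilon$-almost isomorphism. My approach is to redefine $\tilde{\varsigma}_j(e_j) := e_k$ while keeping $\tilde{\varsigma}_j(g) := \varsigma_j(g)$ elsewhere, and then verify by case analysis on whether the inputs equal the identity that $(\tilde{\varsigma}_1, \tilde{\varsigma}_2)$ is a unital $2\varepsilon$-near isometric isomorphism on the appropriate radius. The only new cost comes from replacing $\varsigma_j(e_j)$ by $e_k$ in expressions where $e_j$ appears as an argument; by property (4) this swap moves quantities by at most $\delta_k(\varsigma_j(e_j),e_k) \leq \varepsilon$, so each of properties (1)--(3) is satisfied with $\varepsilon$ replaced by $2\varepsilon$. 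Property (4) becomes trivial since $\tilde{\varsigma}_j(e_j) = e_k$ exactly. Once unitality is secured at the expense of this factor two, Part (2) converts $(\tilde{\varsigma}_1, \tilde{\varsigma}_2)$ into an element of $\UIso{6\varepsilon}{(G_1,\delta_1)}{(G_2,\delta_2)}{r/2}$, completing the proof.
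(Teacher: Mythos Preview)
Your proposal is correct and follows essentially the same approach as the paper's own proof: for Part~(1) you specialize the defining inequality at $g'=e_j$ and at well-chosen $h$'s exactly as the paper does; for Part~(2) you interpolate via the chain $\varsigma_j(g)\varsigma_j(g')\to\varsigma_j(gg')\to\varsigma_j(\varsigma_k(h))\to h$, which is the same three-step estimate the paper packages by first isolating an analogue of~(1a); and for Part~(3) you redefine $\varsigma_j$ at the identity and reduce to Part~(2), again matching the paper. One point worth noting: you explicitly flag the domain check $\varsigma_k(h)\in G_j[r]$ needed to invoke property~(2), which the paper glosses over; your handling of the main case $\varepsilon\leq r/2$ is clean, though your ``trivial'' dismissal of the edge case $\varepsilon>r/2$ is not quite right as stated (the relevant quantities need not lie in a ball of radius $r/2$), but this edge case is immaterial for the lemma's use and the paper does not address it either.
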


\begin{proof}
First, let $(\varsigma_1,\varsigma_2) \in \UIso{\varepsilon}{(G_1,\delta_1)}{(G_2,\delta_2)}{r}$.

Let $\{j,k\} = \{1,2\}$. If $g\in G_j[r]$ and $h\in G_k[r]$ then:
\begin{align*}
  \left| \delta_k(\varsigma_j(g),h) - \delta_j(g,\varsigma_k(h)) \right|  &= \left|\delta_k(\varsigma_j(g)e_k,h) - \delta_j(g e_j,\varsigma_k(h))\right| \\
   &= \left|\delta_k(\varsigma_j(g)\varsigma_j(e_j),h) - \delta_j(g e_j,\varsigma_k(h))\right| \\
    &\leq \varepsilon \text{.}
\end{align*}
Therefore, for all $g\in G_j[r]$ we have:
\begin{equation*}
  \left|\delta_k(\varsigma_j(g),e_2) - \delta_j(g,\varsigma_k(e_j))\right| = \left|\delta_j(\varsigma_j(g),e_2) - \delta_j(g,e_1)\right| \leq \varepsilon\text{,}
\end{equation*}
and therefore if $g\in G_j[t]$ then $\varsigma_j(g) \in G_j[t + \varepsilon]$ for all $t\in [0,r]$.

The rest of Assertion (1) trivially holds if $r' = 0$, so we assume $r' > 0$, i.e. $r > \varepsilon$.

We then note that if $g \in G_k[r-\varepsilon]$ then $\varsigma_j(g) \in G_j[r]$, and then:
\begin{equation}\label{near-inverse-eq}
\begin{split}
\delta_k(\varsigma_j(\varsigma_k(g)),g) &= \left| \delta_k(\varsigma_j(\varsigma_k(g)),g) - \delta_j(\varsigma_k(g),\varsigma_k(g))\right| + \delta_j(\varsigma_k(g),\varsigma_k(g)) \\
  &\leq \varepsilon + 0 = \varepsilon \text{.}
\end{split}
\end{equation}

Thus (1a), (1b) and (1c) are proven for all $\{j,k\}=\{1,2\}$.

Using Inequality (\ref{near-inverse-eq}), if $g,g'\in G_j[\frac{r - \varepsilon}{2}]$, then:
\begin{multline*}
\delta_k(\varsigma_j(g)\varsigma_j(g'),\varsigma_j(gg')) \\
\begin{split} 
&\leq \left|\delta_k(\varsigma_j(g)\varsigma_j(g'),\varsigma_j(gg')) - \delta_j(gg',\varsigma_k(\varsigma_j(gg')))\right| + \delta_j(gg',\varsigma_k(\varsigma_j(gg'))) \\
&\leq \varepsilon + \varepsilon = 2\varepsilon \text{.}
\end{split}
\end{multline*}

We can now conclude that for all $g,g' \in G_j[r-\varepsilon]$:
\begin{multline*}
\left|\delta_k(\varsigma_j(g),\varsigma_j(g')) - \delta_j(g,g')\right| \\
\begin{split}
  &\leq \left|\delta_k(\varsigma_j(g),\varsigma_j(g')) - \delta_j(g,\varsigma_k\circ\varsigma_j(g')) \right| + \left|\delta_j(g,\varsigma_k\circ\varsigma_j(g')) - \delta_j(g,g')\right|\\
  &\leq \varepsilon + \left|\delta_j(g, \varsigma_k\circ\varsigma_j(g')) - \delta_j(g,g') \right| \\
  &\leq \varepsilon + \delta_j(\varsigma_k\circ\varsigma_j(g'),g')\\
  &\leq 2\varepsilon \text{.}
\end{split}
\end{multline*}

This concludes our proof of (1).

Let now $(\varsigma_1,\varsigma_2)$ be an $r$-local $\varepsilon$-near isometric isomorphism. We note that for all $\{j,k\} = \{1,2\}$, $g,g' \in G_j[r]$ and $h\in G_k[r]$:
\begin{multline*}
  |\delta_k(\varsigma_j(g),h) - \delta_j(g,\varsigma_k(h))| \\
  \begin{split}
    &\leq |\delta_k(\varsigma_j(g),h)-\delta_k(\varsigma_j(g),\varsigma_j\circ\varsigma_k(h))| + |\delta_k(\varsigma_j(g),\varsigma_j(\varsigma_k(h))) - \delta_j(g,\varsigma_k(h))|\\
    &\leq \delta_k(h,\varsigma_j\circ\varsigma_k(h)) + \varepsilon \\
    &\leq 2 \varepsilon \text{,}
  \end{split}
\end{multline*}
so, if $g,g' \in G_j\left[\frac{r}{2}\right]$ and $h\in G_k[r]$:
\begin{multline*}
|\delta_k(\varsigma_j(g)\varsigma_j(g'),h) - \delta_j(g g', \varsigma_k(h))| \\
\begin{split}
&\leq |\delta_k(\varsigma_j(g)\varsigma_j(g'),h) - \delta_k(\varsigma_j(g g'), h)| + |\delta_k(\varsigma_j(g g'), h) - \delta_j(g g', \varsigma_k(h))| \\
&\leq \delta_k(\varsigma_j(g)\varsigma_j(g'),\varsigma_j(g g')) + |\delta_k(\varsigma_j(g g'), h) - \delta_j(g g', \varsigma_k(h))| \\
&\leq \varepsilon + 2\varepsilon = 3\varepsilon \text{.}
\end{split}
\end{multline*}

Thus if $(\varsigma_1,\varsigma_2)$ is unital, then we have shown that:
\begin{equation*}
(\varsigma_1,\varsigma_2) \in \UIso{3\varepsilon}{(G_1,\delta_1)}{(G_2,\delta_2)}{\frac{r}{2}}\text{.}
\end{equation*}

Assume now that $(\varsigma_1,\varsigma_2)$, still an $r$-local $\varepsilon$-near isometry, is not longer unital. Define for both $j\in\{1,2\}$:
\begin{equation*}
\varsigma_j' : g \in G_j[r+\varepsilon] \mapsto \begin{cases}
e_k \text{ if $g = e_j$,}\\
\varsigma_j(g) \text{ otherwise.}
\end{cases}
\end{equation*}

Let now $\{j,k\} = \{1,2\}$. Note that of course, $\delta_j(\varsigma'_k\circ\varsigma'_j(e_j),e_j) = 0$.  Let now $g \in G_j$. If $g\not=e_j$ we check:
\begin{align*}
  \delta_k(\varsigma'_j(g)\varsigma'_j(e), \varsigma'_j(g e)) &= \delta_k(\varsigma_j(g),\varsigma_j(g)) = 0 \text{,} 
\end{align*}
and similarly, all needed properties for $(\varsigma_1',\varsigma'_2)$ to be a near isometric isomorphism are trivially met, with the following computation the only relevant one here for $g \in G_j[r]$:
\begin{multline*}
\left|\delta_k(\varsigma'_j(g),\varsigma'_j(e_j)) - \delta_j(g,e_j)\right| \\
\begin{split}
&\leq \left|\delta_k(\varsigma'_j(g),e_j) - \delta_j(\varsigma_j(g),\varsigma_j(e_j))\right| + \left|\delta_k(\varsigma_j(g),\varsigma_j(e_j)) - \delta_j(g,e_j)\right| \\
&\leq \delta_k(e_j,\varsigma_j(e_j)) + \left|\delta_k(\varsigma_j(g),\varsigma_j(e_j)) - \delta_j(g,e_j)\right| \\
&\leq 2 \varepsilon \text{.}
\end{split}
\end{multline*}

Thus, $(\varsigma'_1,\varsigma_2')$ is an $r$-local unital $2\varepsilon$-near isometric isomorphism, and therefore $(\varsigma'_1,\varsigma_2') \in \UIso{6\varepsilon}{(G_1,\delta_1)}{(G_2,\delta_2)}{\frac{r}{2}}$ as desired.
\end{proof}

Almost isometric isomorphisms behave well under composition, which is the matter of the following lemma.

\begin{lemma}\label{uiso-composition-lemma}
Let $(G_1,\delta_1)$, $(G_2,\delta_2)$ and $(G_3,\delta_3)$ be three metric monoids with respective identity elements $e_1$, $e_2$ and $e_3$. 

If:
\begin{align*}
  (\varsigma_1,\varkappa_1)\in\UIso{\varepsilon_1}{(G_1,\delta_1)}{(G_2,\delta_2)}{\frac{1}{\varepsilon_1}}
\intertext{ and } 
(\varsigma_2,\varkappa_2)\in\UIso{\varepsilon_2}{(G_2,\delta_2)}{(G_3,\delta_3)}{\frac{1}{\varepsilon_2}}
\end{align*}
for some $\varepsilon_1, \varepsilon_2 \in \left( 0, \frac{\sqrt{2}}{2} \right]$, then:
\begin{equation*}
\left(\varsigma_2\circ\varsigma_1,\varkappa_1\circ\varkappa_2\right) \in \UIso{\varepsilon_1 + \varepsilon_2}{(G_1,\delta_1)}{(G_3,\delta_3)}{\frac{1}{\varepsilon_1 + \varepsilon_2}} \text{.}
\end{equation*}
\end{lemma}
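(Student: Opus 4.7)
The plan is to verify directly the two conditions of Definition \ref{almost-iso-def} for the pair $(\varsigma_2\circ\varsigma_1,\varkappa_1\circ\varkappa_2)$, at radius $r = 1/(\varepsilon_1+\varepsilon_2)$ and tolerance $\varepsilon_1+\varepsilon_2$. Unitality is immediate: $\varsigma_2\circ\varsigma_1(e_1)=\varsigma_2(e_2)=e_3$ and $\varkappa_1\circ\varkappa_2(e_3)=\varkappa_1(e_2)=e_1$. The real content is the almost-isometry inequality, and the only delicate point is to check that when we chain the two almost isometries, the intermediate images still land in the domains on which the hypothesized almost-isometry estimates are valid. This is where the constraint $\varepsilon_1,\varepsilon_2\leq\frac{\sqrt{2}}{2}$ is crucial.

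First, I would note the elementary containment $1/(\varepsilon_1+\varepsilon_2)\leq \min\{1/\varepsilon_1,1/\varepsilon_2\}$, so that any $g\in G_j[1/(\varepsilon_1+\varepsilon_2)]$ automatically lies in whichever ball of radius $1/\varepsilon_i$ is needed. Then, applying Lemma \ref{almost-isoiso-lemma}(1b) to $(\varsigma_1,\varkappa_1)$ with $t=1/(\varepsilon_1+\varepsilon_2)$, for any $g\in G_1[1/(\varepsilon_1+\varepsilon_2)]$:
\begin{equation*}
\varsigma_1(g)\in G_2\!\left[\tfrac{1}{\varepsilon_1+\varepsilon_2}+\varepsilon_1\right].
\end{equation*}
To apply the $(\varsigma_2,\varkappa_2)$ almost-isometry at its own radius $1/\varepsilon_2$, I need $\frac{1}{\varepsilon_1+\varepsilon_2}+\varepsilon_1\leq \frac{1}{\varepsilon_2}$, equivalently $\varepsilon_2(\varepsilon_1+\varepsilon_2)\leq 1$, which follows from $\varepsilon_1+\varepsilon_2\leq\sqrt{2}$ and $\varepsilon_2\leq \sqrt{2}/2$. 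Symmetrically, for $h\in G_3[1/(\varepsilon_1+\varepsilon_2)]$ the image $\varkappa_2(h)$ lies in $G_2[1/\varepsilon_1]$, by the analogous bound $\varepsilon_1(\varepsilon_1+\varepsilon_2)\leq 1$. The same bounds will cover the symmetric direction (starting from $G_3$).

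With these containments secured, I would carry out the core estimate. Fix $g,g'\in G_1[1/(\varepsilon_1+\varepsilon_2)]$ and $h\in G_3[1/(\varepsilon_1+\varepsilon_2)]$, and set $a=\varsigma_1(g)$, $a'=\varsigma_1(g')$, $b=\varkappa_2(h)$. By the step above, $a,a'\in G_2[1/\varepsilon_2]$ and $b\in G_2[1/\varepsilon_1]$. The almost-isometry property of $(\varsigma_2,\varkappa_2)$ applied to $a,a',h$ gives
\begin{equation*}
\bigl|\delta_3(\varsigma_2(a)\varsigma_2(a'),h)-\delta_2(aa',b)\bigr|\leq \varepsilon_2,
\end{equation*}
and the almost-isometry property of $(\varsigma_1,\varkappa_1)$ applied to $g,g',b$ (here using $b\in G_2[1/\varepsilon_1]$) gives
\begin{equation*}
\bigl|\delta_2(\varsigma_1(g)\varsigma_1(g'),b)-\delta_1(gg',\varkappa_1(b))\bigr|\leq \varepsilon_1.
\end{equation*}
Adding these two inequalities via the triangle inequality for $|\cdot|$ yields
\begin{equation*}
\bigl|\delta_3((\varsigma_2\circ\varsigma_1)(g)(\varsigma_2\circ\varsigma_1)(g'),h)-\delta_1(gg',(\varkappa_1\circ\varkappa_2)(h))\bigr|\leq \varepsilon_1+\varepsilon_2.
\end{equation*}

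Finally, I would repeat the argument verbatim in the reversed direction, taking $g,g'\in G_3[1/(\varepsilon_1+\varepsilon_2)]$ and $h\in G_1[1/(\varepsilon_1+\varepsilon_2)]$; the same ball-containment computations (invoking the $\sqrt{2}/2$ bound) work because the roles of $\varepsilon_1$ and $\varepsilon_2$ are perfectly symmetric in the estimates $\varepsilon_i(\varepsilon_1+\varepsilon_2)\leq 1$. Combining both directions with unitality yields membership in $\UIso{\varepsilon_1+\varepsilon_2}{(G_1,\delta_1)}{(G_3,\delta_3)}{1/(\varepsilon_1+\varepsilon_2)}$, as claimed. The main (and only nontrivial) obstacle is the ball-containment bookkeeping in the paragraph above; once one sees that the hypothesis $\varepsilon_i\leq \sqrt{2}/2$ is precisely what makes $\varepsilon_i(\varepsilon_1+\varepsilon_2)\leq 1$ hold, the rest is two applications of the definition chained by the triangle inequality.
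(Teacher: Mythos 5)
Your proposal is correct and follows essentially the same route as the paper: establish via Lemma (\ref{almost-isoiso-lemma}) (1b) and the bound $\varepsilon_i(\varepsilon_1+\varepsilon_2)\leq 1$ (forced by $\varepsilon_1,\varepsilon_2\leq\frac{\sqrt{2}}{2}$) that $\varsigma_1(g)\in G_2\left[\frac{1}{\varepsilon_2}\right]$ and $\varkappa_2(h)\in G_2\left[\frac{1}{\varepsilon_1}\right]$, then chain the two defining almost-isometry inequalities through the intermediate quantity $\delta_2(\varsigma_1(g)\varsigma_1(g'),\varkappa_2(h))$ and invoke symmetry and unitality. This matches the paper's argument, including the key algebraic verification that $\varepsilon_1+\frac{1}{\varepsilon_1+\varepsilon_2}\leq\frac{1}{\varepsilon_2}$.
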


\begin{proof}
  Let $g \in G_1\left[\frac{1}{\varepsilon_1  + \varepsilon_2}\right]$. By Assertion (1b) of Lemma (\ref{almost-isoiso-lemma}), we have:
  \begin{equation*}
    \delta_2(e_2,\varsigma_1(g)) \leq \varepsilon_1 + \delta_1(\varsigma_2(e_2),g) = \varepsilon_1 + \delta_1(e_1,g) \leq \varepsilon_1 + \frac{1}{\varepsilon_1 + \varepsilon_2} \text{.}
  \end{equation*}

  We then compute:
  \begin{align*}
    \varepsilon_1 + \frac{1}{\varepsilon_1 + \varepsilon_2} - \frac{1}{\varepsilon_2} &= \frac{\varepsilon_1(\varepsilon_1 + \varepsilon_2)\varepsilon_2 + \varepsilon_2 - (\varepsilon_1 + \varepsilon_2)}{\varepsilon_2(\varepsilon_1 + \varepsilon_2)}\\
&= \frac{\varepsilon_1}{\varepsilon_2(\varepsilon_1 + \varepsilon_2)} \left((\varepsilon_1+\varepsilon_2)\varepsilon_2 - 1 \right)\text{.}
  \end{align*}
  Of course, $\frac{\varepsilon_1}{\varepsilon_2(\varepsilon_1 + \varepsilon_2)} \geq 0$. Since $\max\left\{ \varepsilon_1, \varepsilon_2 \right\} \leq \frac{\sqrt{2}}{2}$, we have $(\varepsilon_1 + \varepsilon_2)\varepsilon_2 - 1 \leq 0$. We thus conclude:
  \begin{equation*}
    \varepsilon_1 + \frac{1}{\varepsilon_1 + \varepsilon_2} \leq \frac{1}{\varepsilon_2} \text{.}
  \end{equation*}

  Consequently, $\varsigma_1(g) \in G_2\left[\frac{1}{\varepsilon_2}\right]$. Thus $\varsigma = \varsigma_2\circ\varsigma_1$ is a well-defined map from $G_1\left[\frac{1}{\varepsilon_1 + \varepsilon_2} \right]$ to $G_3$. By symmetry, $\varkappa = \varkappa_1\circ\varkappa_2$ is well-defined as well, from $G_3\left[\frac{1}{\varepsilon_1 + \varepsilon_2}\right]$ to $G_1$. 

  Let $g,g' \in G_1\left[\frac{1}{\varepsilon_1+\varepsilon_2}\right]$ and $h \in G_3\left[\frac{1}{\varepsilon_1+\varepsilon_2}\right]$. We then compute:
  \begin{multline*}
    \left|\delta_3(\varsigma(g)\varsigma(g'),h) - \delta_1(gg',\varkappa(h))\right| \\
    \begin{split}
      &= \left|\delta_3(\varsigma_2(\varsigma_1(g))\varsigma_2(\varsigma_1(g')),h) - \delta_1(gg',\varkappa(h))\right|\\
      &\leq \left|\delta_3(\varsigma_2(\varsigma_1(g))\varsigma_2(\varsigma_1(g')),h) - \delta_2(\varsigma_1(g)\varsigma_1(g'),\varkappa_2(h))\right|\\
      &\quad+ \left|\delta_2(\varsigma_1(g)\varsigma_1(g'),\varkappa_2(h)) - \delta_1(gg',\varkappa_1(\varkappa_2(h)))\right|\\
      &\leq \varepsilon_1 + \varepsilon_2 \text{.}
    \end{split}
\end{multline*}

An analogue computation would also show that:
\begin{multline*}
  \forall g,g' \in G_3\left[\frac{1}{\varepsilon_1 + \varepsilon_2}\right] \; \forall h \in G_1\left[\frac{1}{\varepsilon_1 + \varepsilon_2}\right] \\
  \left|\delta_1(\varkappa(g)\varkappa(g'),h) - \delta_3(g g', \varsigma(h))\right| \leq \varepsilon_1 + \varepsilon_2 \text{.}
\end{multline*}

Moreover $\varsigma(e_1) = e_3$ and $\varkappa(e_3) = e_1$. Thus:
\begin{equation*}
(\varsigma,\varkappa)\in\UIso{\varepsilon_1 + \varepsilon_2}{(G_1,\delta_1)}{(G_3,\delta_3)}{\frac{1}{\varepsilon_1 + \varepsilon_2}}
\end{equation*}
 as desired.
\end{proof}

We now prove that $\Upsilon$ as defined in Definition (\ref{group-GH-def}) is indeed a metric up to isometric isomorphism on the class of proper monoids. We note that the reason we work with proper, rather than more general metric, monoids, is precisely to obtain the following coincidence property for $\Upsilon$.

\begin{theorem}\label{upsilon-metric-thm}
For any proper metric monoids $(G_1,\delta_1)$, $(G_2,\delta_2)$ and $(G_3,\delta_3)$:
\begin{enumerate}
\item $\Upsilon((G_1,\delta_1),(G_2,\delta_2)) \leq \frac{\sqrt{2}}{2}$,
\item $\Upsilon((G_1,\delta_1),(G_2,\delta_2)) = \Upsilon((G_2,\delta_2),(G_1,\delta_1))$,
\item $\Upsilon((G_1,\delta_1),(G_3,\delta_3)) \leq \Upsilon((G_1,\delta_1),(G_2,\delta_2)) + \Upsilon((G_2,\delta_2),(G_3,\delta_3))$,
\item If $\Upsilon((G_1,\delta_1),(G_2,\delta_2)) = 0$ if and only if there exists a monoid isometric isomorphism from $(G_1,\delta_1)$ to $(G_2,\delta_2)$.
\end{enumerate}
In particular, $\Upsilon$ is a metric up to metric group isometric isomorphism on the class of proper metric groups.
\end{theorem}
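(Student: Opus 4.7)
The plan is as follows. Properties (1) and (2) are immediate from Definition \ref{group-GH-def}: (1) holds by construction (the minimum with $\frac{\sqrt{2}}{2}$), and (2) follows from the symmetry of the conditions defining $\UIso{\varepsilon}{\cdot}{\cdot}{r}$ under swapping the ordered pair $(\varsigma_1,\varsigma_2)$. For the triangle inequality (3), if $\Upsilon((G_1,\delta_1),(G_2,\delta_2)) + \Upsilon((G_2,\delta_2),(G_3,\delta_3)) \geq \frac{\sqrt{2}}{2}$ the inequality is trivial from (1); otherwise, given $\varepsilon_j$ slightly above each of the two distances with $\varepsilon_1 + \varepsilon_2 \leq \frac{\sqrt{2}}{2}$, Lemma \ref{uiso-composition-lemma} directly produces an element of $\UIso{\varepsilon_1+\varepsilon_2}{(G_1,\delta_1)}{(G_3,\delta_3)}{\frac{1}{\varepsilon_1+\varepsilon_2}}$, whence the triangle inequality follows by taking infima. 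The role of the $\frac{\sqrt{2}}{2}$ threshold is precisely to guarantee that the domains of the factors match when composing, as observed in the proof of Lemma \ref{uiso-composition-lemma}.

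The substantive content is (4). One implication is trivial: an isometric monoid isomorphism together with its inverse lies in $\UIso{0}{\cdot}{\cdot}{r}$ for every $r \geq 0$, forcing $\Upsilon = 0$. For the converse, I would pick a sequence $\varepsilon_n \downarrow 0$ and choose almost isometric isomorphisms $(\varsigma_n,\varkappa_n) \in \UIso{\varepsilon_n}{(G_1,\delta_1)}{(G_2,\delta_2)}{\frac{1}{\varepsilon_n}}$. By Lemma \ref{almost-isoiso-lemma}(1b), once $\frac{1}{\varepsilon_n} \geq R$, the map $\varsigma_n$ sends $G_1[R]$ into $G_2[R+\varepsilon_n]$, which is compact by properness. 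By Lemma \ref{almost-isoiso-lemma}(1e), on $G_1[R]$ the maps $\varsigma_n$ satisfy $|\delta_2(\varsigma_n(g),\varsigma_n(g')) - \delta_1(g,g')| \leq 2\varepsilon_n$, so they are uniformly equicontinuous. Fixing countable dense subsets $D \subset G_1$ and $E \subset G_2$ (available because proper implies separable), I would use a diagonal extraction along exhausting balls $G_1[N]$ and $G_2[N]$ to obtain a subsequence for which $\varsigma_n(d)$ converges in $G_2$ for every $d \in D$ and $\varkappa_n(e)$ converges in $G_1$ for every $e \in E$. The equicontinuity then allows these pointwise limits on dense subsets to extend to continuous maps $\varsigma : G_1 \to G_2$ and $\varkappa : G_2 \to G_1$.

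It remains to pass to the limit in the four inequalities of Lemma \ref{almost-isoiso-lemma}(1). From (1e), $\varsigma$ and $\varkappa$ are genuine isometries. From (1c), $\varkappa \circ \varsigma = \mathrm{id}_{G_1}$ and $\varsigma \circ \varkappa = \mathrm{id}_{G_2}$, so both are bijections. From (1d), $\varsigma(g g') = \varsigma(g)\varsigma(g')$ in the limit, i.e.\ $\varsigma$ is a monoid morphism; unitality is preserved from Definition \ref{almost-iso-def}. Hence $\varsigma$ is an isometric monoid isomorphism. The main obstacle is the Arzelà-Ascoli/diagonal extraction: one must carefully coordinate (i) the growth of the domain $G_1\bigl[\frac{1}{\varepsilon_n}\bigr]$ with the properness of $G_1$ and $G_2$ so that the target of $\varsigma_n$ lands in a compact ball, (ii) the equicontinuity from (1e), and (iii) the extraction of a single subsequence that works simultaneously for $\varsigma_n$ on all of $D$ and for $\varkappa_n$ on all of $E$, in order to obtain well-defined continuous limits on the whole monoids. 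Once these limits are produced, the remaining algebraic, isometric, and invertibility properties are direct limiting statements.
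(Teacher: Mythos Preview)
Your proposal is correct and follows essentially the same approach as the paper: items (1)--(3) are handled identically (definition, symmetry, and composition via Lemma~\ref{uiso-composition-lemma}), and for (4) the paper also picks a sequence $(\varsigma_n,\varkappa_n)$ with $\varepsilon_n\to 0$, runs a diagonal extraction on a countable dense subset using properness and the near-isometry estimate (1e), extends the limit to all of $G_1$ by uniform continuity, and then passes to the limit in the inequalities of Lemma~\ref{almost-isoiso-lemma} to obtain the isometric monoid isomorphism. The only cosmetic differences are that the paper phrases the extension via uniform continuity and completeness rather than Arzel\`a--Ascoli, and explicitly records (as you anticipated under ``main obstacle'') that the pointwise convergence $\varsigma_{k(n)}(g)\to\varsigma(g)$ extends from the dense set to every $g\in G_1$ before passing to the limit in (1d); you should make this step explicit, since $D$ need not be closed under multiplication.
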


\begin{proof}
  The symmetry of $\Upsilon$ follows obviously from the symmetry of the definition of almost isometric isometries. Moreover our metric is bounded above by $\frac{\sqrt{2}}{2}$ by definition as well.

  We now prove the triangle inequality. Let $\upsilon_1 = \Upsilon((G_1,\delta_1),(G_2,\delta_2))$ and $\upsilon_2 = \Upsilon((G_2,\delta_2),(G_3,\delta_3))$. We first observe that if $\max\{\upsilon_1,\upsilon_2\} = \frac{\sqrt{2}}{2}$ then the triangle inequality immediately holds. So we now assume $\max\{ \upsilon_1, \upsilon_2 \} < \frac{\sqrt{2}}{2}$.

Let $\varepsilon > 0$ so that $\max\{\upsilon_1+\frac{\varepsilon}{2},\upsilon_2+\frac{\varepsilon}{2}\}<\frac{\sqrt{2}}{2}$. 

Let $(\varsigma_1,\varkappa_1) \in \UIso{\upsilon_1 + \frac{\varepsilon}{2}}{(G_1,\delta_1)}{(G_2,\delta_2)}{\frac{1}{\upsilon_1 + \frac{\varepsilon}{2}}}$. Similarly, let $(\varsigma_2,\varkappa_2)\in \UIso{\upsilon_2 + \frac{\varepsilon}{2}}{(G_2,\delta_2)}{(G_3,\delta_3)}{\frac{1}{\upsilon_2 + \frac{\varepsilon}{2}}}$. By Lemma (\ref{uiso-composition-lemma}), if $\varsigma = \varsigma_2\circ\varsigma_1$ and $\varkappa = \varkappa_1\circ\varkappa_2$ then $(\varsigma,\varkappa) \in \UIso{\upsilon_1 + \upsilon_2 + \varepsilon}{(G_1,\delta_1)}{(G_3,\delta_3)}{\frac{1}{\upsilon_1 + \upsilon_2 + \varepsilon}}$. Therefore by Definition (\ref{group-GH-def}), we conclude:
\begin{equation*}
\Upsilon((G_1,\delta_1),(G_3,\delta_3)) \leq \upsilon_1 + \upsilon_2 + \varepsilon \text{.}
\end{equation*} 
As $\varepsilon > 0$ is arbitrary, we conclude:
\begin{equation*}
\Upsilon((G_1,\delta_1),(G_3,\delta_3)) \leq \upsilon_1 + \upsilon_2 = \Upsilon((G_1,\delta_1),(G_2,\delta_2)) + \Upsilon((G_2,\delta_2),(G_3,\delta_3)) \text{,}
\end{equation*}
as desired.

We now prove that $\Upsilon((G,\delta_G),(H,\delta_H)) = 0$ implies the existence of an isometric monoid isomorphism between two proper monoids $(G,\delta_G)$ and $(H,\delta_H)$.

Let $S$ be a countable dense subset of $G$. Let $G_0$ be the submonoid generated in $G$ by $S$, which is also countable as it is the set of all finite products of elements in $S$. Similarly, let $H_0$ be a countable dense monoid of $H$.

Assume now that $\Upsilon((G,\delta_G),(H,\delta_H)) = 0$. For each $n\in\N$, let:
\begin{equation*}
(\varsigma_n,\varkappa_n) \in \UIso{\frac{1}{n+1}}{(G,\delta_G)}{(H,\delta_H)}{n+1}\text{.}
\end{equation*}
To ease our notation, for any $n\in\N$ and $g \in G\setminus G[n+1]$ and $h\in H \setminus H[n+1]$, we set $\varsigma_n(g) = e_2$ and $\varkappa_n(h) = e_1$.

Let $g\in G_0$ and $j : \N\rightarrow\N$ be any strictly increasing function. By Assertion (1b) of Lemma (\ref{almost-isoiso-lemma}), $\varsigma_{j(n)}(g) \in H[\delta(g,e) + 1]$ for all $n\in\N$. Since $(H,\delta_H)$ is proper, the set $H[\delta(g,e)+1]$ is compact, and thus $(\varsigma_{j(n)}(g))_{n\in\N}$ admits a convergent subsequence. Thus, as $G_0$ is countable, a diagonal argument shows that there exists a strictly increasing sequence $j$ of natural numbers such that, for all $g\in G_0$, the sequence $(\varsigma_{j(n)}(g))_{n\in\N}$ converges to a limit we denote by $\varsigma(g)$.

By Assertion (1e) of  Lemma (\ref{almost-isoiso-lemma}), the map $\varsigma$ is an isometry on $G_0$, since for all $g,g' \in G_0$:
\begin{align*}
  \left|\delta_H(\varsigma(g),\varsigma(g')) - \delta_G(g,g')\right| &\leq \limsup_{n\rightarrow\infty} \left|\delta_H(\varsigma_{n}(g),\varsigma_{n}(g')) - \delta_G(g,g')\right| \\
    &= 0\text{.}
\end{align*}

Now, as a uniformly continuous function over the dense subset $G_0$ of the complete metric space $G$,  the map $\varsigma$ admits a unique uniformly continuous extension to $G$ which we still denote by $\varsigma$. It is immediate that $\varsigma$ is an isometry. Moreover, let $g \in G$ and $\varepsilon > 0$. There exists $g' \in G_0$ with $\delta_G(g,g') < \frac{\varepsilon}{4}$. Let $N_0 \in \N$ be chosen such that $g' \in G[N_0]$. There is $N_1\in\N$ with $N_1 \geq N_0$ such that for all $n\geq N_1$, we have $\delta_H(\varsigma(g'),\varsigma_{j(n)}(g')) < \frac{\varepsilon}{4}$. Last, there exists $N_2 \in \N$ such that if $n\geq N_2$ then $\frac{2}{n+1} \leq \frac{\varepsilon}{4}$. We thus have for all $n\geq \max\{N_1,N_2\}$, by Assertion (1e) of Lemma (\ref{almost-isoiso-lemma}) applied to  $\varsigma_{j(n)}$, and since $\varsigma$ is an isometry:
\begin{align*}
  \delta_H(\varsigma(g),\varsigma_{j(n)}(g)) &\leq \delta_H(\varsigma(g),\varsigma(g')) + \delta_H(\varsigma(g'),\varsigma_{j(n)}(g')) + \delta_H(\varsigma_{j(n)}(g'),\varsigma_{j(n)}(g)) \\
&\leq \delta_G(g,g') + \frac{\varepsilon}{4} + \delta_G(g,g') +  \frac{\varepsilon}{4}\\
&\leq \varepsilon \text{.}
\end{align*}
Hence $\varsigma(g)$ is the limit of $(\varsigma_n(g))_{n\in\N}$ for all $g\in G$.

Now, by the same argument, there exists an isometric map $\varkappa : H \rightarrow G$ and a strictly increasing $k : \N \rightarrow  \N$ such that for all $h\in H$, the sequence $\varkappa_{j\circ k(n)}(h)$ converges to $\varkappa(h)$.

Let now $g,g' \in G$ and $h \in H$. By Definition and since the multiplication of $H$ is continuous on $H\times H$:
\begin{multline*}
  \left|\delta_H(\varsigma(g)\varsigma(g'),h) - \delta_G(gg', \varkappa(h))\right| \\
  \begin{split}
    &= \lim_{n\rightarrow\infty} \left|\delta_H(\varsigma_{j\circ k(n)}(g)\varsigma_{j\circ k(n)}(g'),h) - \delta_G(gg', \varkappa_{j\circ k(n)}(h))\right|\\
    &\leq \limsup_{n\rightarrow\infty} \frac{1}{n+1} = 0 \text{.}
  \end{split}
\end{multline*}

By Assertion (1d) of Lemma (\ref{almost-isoiso-lemma}), we conclude that $\varsigma$ is a monoid morphism. We also conclude that $\varsigma$ and $\varkappa$ are inverse of each other by Assertion (1c) of Lemma (\ref{almost-isoiso-lemma}).

Thus, $\Upsilon((G,\delta_G),(H,\delta_H)) = 0$ implies that there exists an isometric monoid isomorphism $\varsigma : G \rightarrow  H$ as desired. 

Conversely, if there exists an isometric monoid isomorphism $\varsigma$ from $(G,\delta_G)$ onto $(H,\delta_H)$ then, for any $\varepsilon > 0$, we observe that $\left(\varsigma,\varsigma^{-1}\right)\in \UIso{\varepsilon}{(G,\delta_G)}{(H,\delta_H)}{\frac{1}{\varepsilon}}$ so $\Upsilon((G,\delta_G),(H,\delta_H)) \leq \varepsilon$. Therefore $\Upsilon((G,\delta_G),(H,\delta_H)) = 0$.

We conclude with the observation that a morphism of monoid between groups is in fact a group morphism. This completes our proof.
\end{proof}

Our intent is to construct a covariant Gromov-Hausdorff distance for proper monoids, so we expect in particular that $\Upsilon$ dominates the Gromov-Hausdorff distance. We now prove that it is indeed the case. We recall from \cite{Gromov81} the definition of the Gromov-Hausdorff distance between pointed proper metric spaces:

\begin{definition}[{\cite{Gromov81}}]
If $(X,d_X)$ and $(Y,d_Y)$ are two proper metric spaces, and if $x\in X$ and $y\in Y$, then the \emph{Gromov-Hausdorff distance}  $\mathrm{GH}((X,d_X,x),(Y,d_Y,y))$ between $(X,d_X,x)$ and $(Y,d_Y,y)$ is:
\begin{equation*}
\max\left\{\frac{1}{2}, \inf\left\{ \varepsilon > 0 \middle\vert \begin{array}{l}
\text{there exists a metric $d$ on $X\coprod Y$ such that:}\\
\text{$d$ restricts to $d_X$ on $X\times X$ and $d_Y$ on $Y\times Y$,}\\
X\left[x,\frac{1}{\varepsilon}\right] \subseteq_{\varepsilon}^d Y \text{ and }Y\left[y,\frac{1}{\varepsilon}\right]\subseteq_{\varepsilon}^d X
\end{array}
  \right\} \right\} \text{,}
\end{equation*}
where for any two subsets $A$ and $B$ of a metric space $(E,d)$ and any $\varepsilon\geq 0$, the notation $A\subseteq_\varepsilon^d B$ is meant for:
\begin{equation*}
  \forall a\in A \quad \exists b \in B \quad d(a,b) \leq \varepsilon \text{,}
\end{equation*}
and $X\coprod Y$ is the disjoint union of $X$ and $Y$ (i.e. the coproduct in the category of sets).
\end{definition}

We now prove that $\Upsilon$ dominates the Gromov-Hausdorff between the underlying metric spaces, with base point chosen to be the units.

\begin{theorem}
If $(G,\delta_G)$ and $(H,\delta_H)$ are two proper metric monoids with respective units $e_G$ and $e_H$ then:
\begin{equation*}
\mathrm{GH}((G,\delta_G,e_G),(H,\delta_H,e_H)) \leq \Upsilon((G,\delta_G),(H,\delta_H)) \text{.}
\end{equation*}
\end{theorem}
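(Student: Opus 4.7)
The plan is to show, for each $\varepsilon > \Upsilon((G,\delta_G),(H,\delta_H))$ with $\varepsilon < \tfrac{1}{2}$, that $\mathrm{GH}((G,\delta_G,e_G),(H,\delta_H,e_H)) \leq \varepsilon$, and then pass to the infimum over such $\varepsilon$. The case $\Upsilon((G,\delta_G),(H,\delta_H)) \geq \tfrac{1}{2}$ is handled immediately by the $\max$ with $\tfrac{1}{2}$ built into the definition of $\mathrm{GH}$, so I assume $\Upsilon < \tfrac{1}{2} < \tfrac{\sqrt{2}}{2}$. Fixing $\varepsilon \in (\Upsilon,\tfrac{1}{2})$, the definition of $\Upsilon$ supplies an $(1/\varepsilon)$-local $\varepsilon$-almost isometric isomorphism $(\varsigma, \varkappa) \in \UIso{\varepsilon}{(G,\delta_G)}{(H,\delta_H)}{1/\varepsilon}$, and Lemma (\ref{almost-isoiso-lemma}) furnishes an arsenal of approximate isometric, near-inverse, and near-multiplicative properties for this pair.

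I would then construct an admissible metric $d$ on the disjoint union $G \coprod H$ from $(\varsigma, \varkappa)$: take $d|_{G \times G} = \delta_G$ and $d|_{H \times H} = \delta_H$, and for $g \in G$ and $h \in H$ define
\[
d(g,h) = d(h,g) = \varepsilon + \min\left\{\inf_{g' \in G[1/\varepsilon]} \left[\delta_G(g,g') + \delta_H(\varsigma(g'),h)\right],\; \inf_{h' \in H[1/\varepsilon]} \left[\delta_G(g,\varkappa(h')) + \delta_H(h',h)\right]\right\}.
\]
The $\varepsilon$-covering conditions for $\mathrm{GH}$ are then immediate: for $g \in G[e_G, 1/\varepsilon]$, taking $g' = g$ in the first infimum yields $d(g,\varsigma(g)) \leq \varepsilon$, and for $h \in H[e_H, 1/\varepsilon]$, taking $h' = h$ in the second yields $d(\varkappa(h),h) \leq \varepsilon$. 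Symmetry and positivity of $d$ are built into the definition.

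The main content is verifying the triangle inequality $d(g,g') \leq d(g,h) + d(h,g')$ for $g, g' \in G$ and $h \in H$ (together with its analogue with two points in $H$ and one in $G$). I would split into four subcases depending on which of the two bridging forms realises the infimum, up to an arbitrarily small slack, in each of $d(g,h)$ and $d(h,g')$. The two mixed subcases follow cleanly from Lemma (\ref{almost-isoiso-lemma}) Assertion (1a), whose bound $|\delta_H(\varsigma(g'),h')-\delta_G(g',\varkappa(h'))|\leq \varepsilon$ on $G[1/\varepsilon]\times H[1/\varepsilon]$ combines with the triangle inequalities in $\delta_G$ and $\delta_H$ and the $2\varepsilon$ worth of bridging slack to deliver the required estimate. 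The two pure subcases (both bridging forms use $\varsigma$, or both use $\varkappa$) rely on the approximate two-point isometry $|\delta_H(\varsigma(g_1),\varsigma(g_2)) - \delta_G(g_1,g_2)| \leq 2\varepsilon$, which Lemma (\ref{almost-isoiso-lemma}) Assertion (1e) provides on the shrunken ball $G[r']$ with $r' = 1/\varepsilon - \varepsilon$ (and analogously for $\varkappa$ on $H[r']$).

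The main obstacle I anticipate is reconciling the covering radius $1/\varepsilon$, over which the infima in the bridging formula range, with the smaller radius $r'$ at which Assertion (1e) guarantees the two-point isometry. When the approximate infimum in a pure subcase is realised by a point in the annulus $G[1/\varepsilon]\setminus G[r']$, Assertion (1e) does not apply directly. I expect to close this gap by observing, via Assertion (1b), that for such an annular $g'$ we have $\varsigma(g') \in H[1/\varepsilon + \varepsilon]$, and then converting the pure $\varsigma$-pairing into a mixed one using Assertions (1a) and (1c) at an additional $O(\varepsilon)$ cost that can be absorbed by the bridging constant, possibly after a passage to a slightly smaller parameter. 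Once this boundary analysis is in place, $\mathrm{GH}((G,\delta_G,e_G),(H,\delta_H,e_H)) \leq \varepsilon$ holds for every $\varepsilon > \Upsilon((G,\delta_G),(H,\delta_H))$, and the desired inequality follows by taking the infimum.
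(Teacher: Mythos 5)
Your proposal is essentially the paper's own proof: the same metric $d$ on the disjoint union $G\coprod H$ (the $\varepsilon$-shifted infimum over the two bridging forms, with the infima ranging over the closed balls of radius $\frac{1}{\varepsilon}$), the same verification of the two covering conditions by taking $g'=g$ and $h'=h$, and the same case-by-case verification of the triangle inequality resting on Assertions (1a) and (1e) of Lemma (\ref{almost-isoiso-lemma}). The boundary issue you single out as the main obstacle is not a point of divergence from the paper: the published argument simply applies Assertion (1e) to arbitrary points of $G\left[\frac{1}{\varepsilon}\right]$, although the lemma states it only on $G\left[\frac{1}{\varepsilon}-\varepsilon\right]$, so on this step you are, if anything, more scrupulous than the source. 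Do note, however, that your sketched repair should not be taken for granted: in the pure subcases the two bridging constants supply exactly $2\varepsilon$ of slack, which the combination of Assertions (1a) and (1c) already exhausts, so an ``additional $O(\varepsilon)$ cost'' cannot simply be absorbed, and passing to a smaller parameter merely relocates the annulus; since this is precisely the step the paper itself elides, your write-up reproduces both the route and the level of detail of the published proof.
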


\begin{proof}
We work under the assumption that $\Upsilon((G,\delta_G),(H,\delta_H)) < \frac{\sqrt{2}}{2}$, as otherwise our conclusion trivially holds. Let $(\varsigma,\varkappa) \in \UIso{\varepsilon}{(G,\delta_G)}{(H,\delta_H)}{\frac{1}{\varepsilon}}$ for $\varepsilon > \Upsilon((G,\delta_G),(H,\delta_H))$. We define $d(x,y) \in \R$ for all $x,y \in G\coprod H$ (which is just a set, not endowed with any algebraic structure) by setting:
\begin{equation*}
  \begin{cases}
    \delta_G(x,y) \text{ if $x,y \in G$,}\\
    \delta_H(x,y) \text{ if $x,y \in H$,}\\
    \varepsilon + \inf\left\{\begin{array}{l}
                               \delta_G(x,g) + \delta_H(\varsigma(g),y),\\
                               \delta_G(x,\varkappa(h)) + \delta_H(h,y)
                             \end{array} 
                             \middle\vert g \in G[\frac{1}{\varepsilon}], h \in H\left[\frac{1}{\varepsilon}\right] \right\} \text{ if $x\in G, y\in H$,}\\
                           d(y,x) \text{ if $x\in H$, $y\in G$.}
   \end{cases}
\end{equation*}
We now prove that $d$ is a metric on $G\coprod H$. By construction, $d$ is symmetric. Moreover, $x\in G$ and $y\in H$ then $d(x,y) \geq\varepsilon > 0$ and $d(y,x)\geq\varepsilon > 0$, so $d(x,y) = 0$ implies $x,y \in G$ or $x,y \in H$, which in both cases then implies $x=y$. Of course, $d(x,x) = 0$ for any $x\in G\coprod H$. We are left to show the triangular inequality. Let $x,y,z\in G\coprod H$. We have several cases to consider. First, assume that $x \in G$ and $y,z \in H$. If $g \in G\left[\frac{1}{\varepsilon}\right]$ then:
\begin{align*}
d(x,z) &\leq \varepsilon + \delta_G(x,g) + \delta_H(\varsigma(g),z) \\
&\leq  \varepsilon + \delta_G(x,g) + \delta_H(\varsigma(g),y) + \delta_H(y,z) \\
&= \varepsilon + \delta_G(x,g) + \delta_H(\varsigma(g),y) + d(y,z) \text{,}
\end{align*}
and similarly if $h \in H\left[\frac{1}{\varepsilon}\right]$ then:
\begin{align*}
  d(x,z) &\leq \varepsilon + \delta_G(x,\varkappa(h)) + \delta_H(h,z) \\
&\leq  \varepsilon + \delta_G(x,\varkappa(h)) + \delta_H(h,y) + \delta_H(y,z) \\
&= \varepsilon + \delta_G(x,\varkappa(h)) + \delta_H(h,y) + d(y,z) \text{.}
\end{align*}
so taking the infimum over $g \in G\left[\frac{1}{\varepsilon}\right]$ and $h\in H\left[\frac{1}{\varepsilon}\right]$, we get:
\begin{equation*}
d(x,z) \leq d(x,y) + d(y,z) \text{.}
\end{equation*}
By symmetry, we also have dealt with the case where $x,y\in H$ and $z \in G$. Now, assume instead that $x,y \in G$ and $z\in H$. If $g \in G\left[\frac{1}{\varepsilon}\right]$ then:
\begin{align*}
d(x,z) &\leq \varepsilon + \delta_G(x,g) + \delta_H(\varsigma(g),z) \\
&\leq \varepsilon + \delta_G(x,y) + \delta_G(y,g) + \delta_H(\varsigma(g),z) \\
&\leq  d(x,y) + \varepsilon + \delta_G(y,g) + \delta_H(\varsigma(g),z) \text{.}
\end{align*}
A similar computation shows $d(x,z) \leq d(x,y) + \varepsilon + \delta_G(y,\varkappa(h)) + \delta_H(h,z)$ for all $h\in H\left[\frac{1}{\varepsilon}\right]$. Hence once again, we conclude by definition of $d$ that:
\begin{equation*}
d(x,z) \leq d(x,y) + d(y,z) \text{.}
\end{equation*}
Again by symmetry, we now have dealt with $x\in H$ and $y,z \in G$. Last, assume that $x,z \in G$ and $y \in H$. Let $g,h\in G\left[\frac{1}{\varepsilon}\right]$. By Assertion (1e) of Lemma (\ref{almost-isoiso-lemma}), we then compute:
\begin{align*}
d(x,z) &= \delta_G(x,z) \\
&\leq \delta_G(x,g) + \delta_G(g,h) + \delta_G(h,z) \\
&\leq \delta_G(x,g) + \delta_H(\varsigma(g),\varsigma(h)) + \delta_G(h,z) + 2\varepsilon \\
&\leq \delta_G(x,g) + \delta_H(\varsigma(g),y) + \delta_H(y,\varsigma(h)) + \delta_G(h,z) + 2\varepsilon\\
&\leq \left(\varepsilon + \delta_G(x,g) + \delta_H(\varsigma(g),y)\right) + \left(\varepsilon + \delta_H(y,\varsigma(h)) + \delta_G(h,z)\right) \text{.}
\end{align*}
Similarly, if $g,h\in H\left[\frac{1}{\varepsilon}\right]$ then:
\begin{equation*}
  d(x,z) \leq \left(\varepsilon + \delta_G(x,\varkappa(g)) + \delta_H(g,y)\right) + \left(\varepsilon + \delta_H(y,h) + \delta_G(\varkappa(h),z)\right) \text{.}
\end{equation*}
We thus conclude, by taking the infimum, that $d(x,z)\leq d(x,y) + d(y,z)$. The same reasoning applies if $x,z\in H$ and $y \in G$. Last, if $x,y,z\in G$ or $x,y,z \in H$ then $d(x,z)\leq d(x,y) + d(y,z)$ by construction of $d$ since $\delta_G$ and $\delta_H$ are indeed metrics. Hence we have proven that $d$ is a metric on $G\coprod H$. Moreover, it is immediate by construction that $d$ restricts to $\delta_G$ on $G$ and $\delta_H$ on $H$.

Let now $x \in G[\frac{1}{\varepsilon}]$. Then:
\begin{equation*}
d(x,\varsigma(x)) \leq \varepsilon + \delta_G(x,x) + \delta_H(\varsigma(x),\varsigma(x)) \leq \varepsilon \text{.}
\end{equation*}
Similarly:
\begin{equation*}
d(\varkappa(x),x) \leq \varepsilon + \delta_G(\varkappa(x),\varkappa(x)) + \delta_H(x,x) \leq \varepsilon \text{.}
\end{equation*}
Therefore $G\left[\frac{1}{\varepsilon}\right]$ lies within $\varepsilon$ of $H$, and conversely $H\left[\frac{1}{\varepsilon}\right]$ lies within $\varepsilon$ of $G$. So by definition:
\begin{equation*}
  \mathrm{GH}((G,\delta_G,e_G),(H,\delta_H,e_H)) \leq \varepsilon\text{,}
\end{equation*}
from which our theorem follows as $\varepsilon > \Upsilon((G,\delta_G),(H,\delta_H))$ is arbitrary.
\end{proof}

\begin{remark}
We note in passing that if $(G,\delta_G)$ and $(H,\delta_H)$ are \emph{sets} with base points $e_G$ and $e_H$ --- and if by abuse of notation, we omit the base point from our notations for closed balls centered at these points --- and if, for some $\varepsilon>0$, we have $\mathrm{GH}((G,\delta_G,e_G),(H,\delta_H,e_H)) < \varepsilon$ ,then there exists a metric $d$ on $G\coprod H$, whose restriction to $G$ is $\delta_G$, whose restriction to $H$ is $\delta_H$, and such that $G\left[\frac{1}{\varepsilon}\right] \subseteq^d_\varepsilon H$ and $H\left[\frac{1}{\varepsilon}\right]\subseteq^d_\varepsilon G$.

If for all $x \in G\left[\frac{1}{\varepsilon}\right]$  we choose $\varsigma(x)$ such that with $d(x,\varsigma(x)) \leq \varepsilon$, and for $h\in H\left[\frac{1}{\varepsilon}\right]$, we choose $\varkappa(x)$ such that $d(x,\varkappa(x)) \leq \varepsilon$, then for all $g\in G\left[\frac{1}{\varepsilon}\right]$:
\begin{align*}
\left| \delta_H(\varsigma(g),\varsigma(g')) - \delta_G(g,g') \right| &\leq |d(\varsigma(g),\varsigma(g')) - d(\varsigma(g),g')| + |d(\varsigma(g),g') - d(g,g')|\\
&\leq d(\varsigma(g'),g') + d(\varsigma(g),g) \leq 2 \varepsilon \text{,}
\end{align*}
and similarly for all $h\in H\left[\frac{1}{\varepsilon}\right]$ and $\varkappa$. We can also check that:
\begin{align*}
\delta_G(g,\varkappa\circ\varsigma(g)) &= d(g,\varkappa(\varsigma(g)))  \\
&\leq d(g,\varsigma(g)) + d(\varsigma(g),\varkappa(\varsigma(g))) \leq 2\varepsilon \text{.}
\end{align*}
Hence we recover Conditions (1) and (3) of Definition (\ref{near-iso-def}) --- we have proven in passing that we can use these two conditions to define classes of $2$-tuple of functions which allow to define the topology of the Gromov-Hausdorff for pointed, proper metric spaces.
\end{remark}

\section{The Covariant Propinquity}

The covariant propinquity is defined on a class of dynamical systems, namely, for our purpose, on {\qcms s} endowed with a strongly continuous action of a proper monoid by Lipschitz positive unital linear endomorphisms. As our work is organized around metric notions, we immediately include the metric on monoids as part of our definition of a Lipschitz dynamical system, even if the definition itself only requires a topological monoid.

\begin{notation}
  Let $(\A,\Lip_\A)$ and $(\B,\Lip_\B)$ be two {\qcms s}. If $\pi : \A \rightarrow \B$ is a unital positive linear map, then:
  \begin{equation*}
    \dil{\pi} = \inf\left\{ k > 0 : \forall a \in \sa{\A} \quad \Lip\circ\pi(a) \leq k \Lip(a) \right\} \text{.}
  \end{equation*}
  By definition, $\dil{\pi} < \infty$ if and only if $\pi$ is a Lipschitz linear map.
\end{notation}

\begin{definition}
  Let $F$ be a permissible function. A \emph{Lipschitz dynamical $F$-system} $(\A,\Lip,G,\delta,\alpha)$ is a {\qcms{F}} $(\A,\Lip)$ and a proper monoid $(G,\delta)$, together with an action $\alpha$ by positive unital maps (i.e. a morphism from $G$ to the monoid of positive linear maps) such that:
\begin{enumerate}
  \item $\alpha$ is strongly continuous: for all $a\in\A$ and $g \in G$, we have:
    \begin{equation*}
      \lim_{h\rightarrow g} \norm{\alpha^h(a) - \alpha^g(a)}{\A} = 0\text{,}
    \end{equation*}
  \item $g\in G\mapsto \dil{\alpha^g}$ is locally bounded: for all $\varepsilon>0$ and $g\in G$ there exist $D > 0$ and a neighborhood $U$ of $g$ in $G$ such that if $h\in U$ then $\dil{\alpha^h} \leq D$.
\end{enumerate}

A \emph{Lipschitz $C^\ast$-dynamical $F$-system} $(\A,\Lip,G,\delta,\alpha)$ is a Lipschitz dynamical system where $G$ is a proper group and $\alpha^g$ is a Lipschitz unital *-automorphism for all $g \in G$.
\end{definition}

The class of Lipschitz dynamical systems include various sub-classes of interest, from group actions by full quantum isometries, to actions by completely positive maps, to actions by Lipschitz automorphisms or even unital endomorphisms.

There is a natural choice of morphisms between two Lipschitz dynamical systems $\mathds{A} = (\A,\Lip_\A,G,\delta_G,\alpha)$ and $\mathds{B} = (\B,\Lip_\B,H,\delta_H,\beta)$. A pair $(\pi,\varsigma)$ is a \emph{morphism} from $\mathds{A}$ to $\mathds{B}$ when $\pi : \A\rightarrow\B$ is a *-morphism such that $\pi(\dom{\Lip_\A})\subseteq \dom{\Lip_\B}$, $\varsigma : G\rightarrow H$ is a Lipschitz map and a monoid morphism, and for all $g \in G$ we have $\pi\circ\alpha^g = \beta^{\varsigma(g)}\circ\pi$. By \cite{Latremoliere14}, $\pi$ could as well be a unital *-morphism for which there exists $k \geq 0$ such that $\Lip_\B\circ\pi \leq k \Lip_\A$, or by \cite{Rieffel00}, for which $\pi^\ast$ is a $k$-Lipschitz map from $(\StateSpace(\B),\Kantorovich{\Lip_\B})$ to $(\StateSpace(\A),\Kantorovich{\Lip_\A})$, as these three notions coincide. On the other hand, we could relax the requirement that $\varsigma$ be Lipschitz to something like uniformly continuous or even continuous, though this would not fully capture the metric structure of our monoids. Alternatively, we could strengthen our requirement on $\varsigma$ and ask for it to be an isometry, and similarly we could require $\pi$ to be a quantum isometry. All these choices lead to various nested categories whose objects are Lipschitz dynamical systems.

Now, the notion of isomorphism for Lipschitz dynamical system, for our purpose, is clear, as we wish to preserve all the involved structures. Our covariant propinquity will be a metric up to the following notion of isomorphism:
\begin{definition}
  An \emph{equivariant quantum full isometry} $(\pi,\varsigma) : \mathds{A} \rightarrow \mathds{B}$ is given by a full quantum isometry $\pi : (\A,\Lip_\A) \rightarrow (\B,\Lip_\B)$ and a monoid isometric isomorphism $\varsigma : G\rightarrow H$ such that for all $g \in G$:
\begin{equation*}
  \pi\circ\alpha^g = \beta^{\varsigma(g)}\circ\pi \text{.}
\end{equation*}
\end{definition}

The construction of the covariant propinquity begins with generalizing the notion of a tunnel between {\qcms s}, as defined in \cite{Latremoliere13b,Latremoliere14} for our construction of the Gromov-Hausdorff propinquity, to our class of Lipschitz dynamical systems. Notably, the needed changes are minimal.

\begin{definition}\label{equi-tunnel-def}
Let $\varepsilon > 0$ and $F$ be a permissible function. Let $(\A_1,\Lip_1,G_1,\delta_1,\alpha_1)$ and $(\A_2,\Lip_2,G_2,\delta_2,\alpha_2)$ be two Lipschitz dynamical $F$-systems. Let $e_1$ and $e_2$ be the identity elements of $G_1$ and $G_2$ respectively. A \emph{$\varepsilon$-covariant $F$-tunnel}:
\begin{equation*}
\tau = (\D,\Lip_\D,\pi_1,\pi_2,\varsigma_1,\varsigma_2)
\end{equation*}
from $(\A_1,\Lip_1,G_1,\delta_1,\alpha_1)$ to $(\A_2,\Lip_2,G_2,\delta_2,\alpha_2)$ is given by 
\begin{equation*}
  (\varsigma_1,\varsigma_2) \in \UIso{\varepsilon}{(G_1,\delta_1)}{(G_2,\delta_2)}{\frac{1}{\varepsilon}} \text{,}
\end{equation*}
an $F$-{\qcms} $(\D,\Lip_\D)$, and two quantum isometries $\pi_1 : (\D,\Lip_\D) \twoheadrightarrow (\A_1,\Lip_1)$ and $\pi_2 : (\D,\Lip_\D) \twoheadrightarrow (\A_2,\Lip_2)$.
\end{definition}

\begin{remark}
  If $\tau$ is an $\varepsilon$-covariant tunnel then it is also an $\eta$-covariant tunnel for any $\eta \geq \varepsilon$.
\end{remark}

\begin{remark}
  If $(\D,\Lip,\pi,\rho,\varsigma,\varkappa)$ is a covariant tunnel from $(\A,\Lip_\A,G,\delta_G,\alpha)$ to $(\B,\allowbreak \Lip_\B,H,\delta_H,\beta)$, then $(\D,\Lip,\pi,\rho)$ is a tunnel from $(\A,\Lip_\A)$ to $(\B,\Lip_\B)$ in the sense of \cite{Latremoliere13b}. We also note that covariant tunnels are not constructed using a Lipschitz dynamical systems. They only involve an almost isometric isomorphism.
\end{remark}

The covariant propinquity is defined from certain quantities associated with covariant tunnels. These quantities do not depend on the quasi-Leibniz inequality. The first of these quantities arises from our work on the propinquity, applied to the underlying tunnel of a covariant tunnel.

\begin{notation}
  Let $\pi : \A \rightarrow \B$ be a positive unital linear map between two unital C*-algebras $\A$ and $\B$. We denote the dual map $\varphi \in \StateSpace(\B) \mapsto \varphi\circ\pi \in \StateSpace(\A)$ by $\pi^\ast$.
\end{notation}

\begin{notation}
  If $(E,d)$ is a metric space, then the Hausdorff distance \cite{Hausdorff} defined on the space of the closed subsets of $(E,d)$ is denoted by $\Haus{d}$. In case $E$ is a normed vector space and $d$ is the distance associated with some norm $N$, we write $\Haus{N}$ for $\Haus{d}$.
\end{notation}

\begin{definition}[{\cite[Definition 2.11]{Latremoliere14}}]\label{extent-def}
  Let $\mathds{A}_1 = (\A_1,\Lip_1,G_1,\delta_1,\alpha_1)$ and $\mathds{A}_2 = (\A_2,\Lip_2,\allowbreak G_2,\delta_2,\alpha_2)$ be two Lipschitz dynamical systems. The \emph{extent $\tunnelextent{\tau}$} of a covariant tunnel $\tau = (\D,\Lip_\D,\pi_1,\pi_2,\varsigma_1,\varsigma_2)$ from $\mathds{A}_1$ to $\mathds{A}_2$ is given as:
\begin{equation*}
\max\left\{ \Haus{\Kantorovich{\Lip}}\left(\StateSpace(\D),\pi_j^\ast\left(\StateSpace(\A_j)\right)\right) \middle\vert j\in\{1,2\} \right\} \text{.}
\end{equation*}
\end{definition}

We now introduce the new quantity in our work with covariant tunnels. The reach of a covariant tunnel, defined below, bring together the tunnel data and the almost isometric isomorphism data using an idea which generalizes \cite[Definition 3.4]{Latremoliere13b}. 

\begin{definition}\label{reach-def}
  Let $\varepsilon > 0$. Let $\mathds{A}_1 = (\A_1,\Lip_1,G_1,\delta_1,\alpha_1)$ and $\mathds{A}_2 = (\A_2,\Lip_2,G_2,\delta_2,\allowbreak \alpha_2)$ be two Lipschitz dynamical systems. The \emph{$\varepsilon$-reach $\tunnelreach{\tau}{\varepsilon}$} of a $\varepsilon$-covariant tunnel $\tau = (\D,\Lip_\D,\pi_1,\pi_2,\varsigma_1,\varsigma_2)$ from $\mathds{A}_1$ to $\mathds{A}_2$ is given as:
\begin{equation*}
\max_{\{j,k\}=\{1,2\}}\sup_{\varphi\in\StateSpace(\A_j)} \inf_{\psi\in\StateSpace(\A_k)}\sup_{g \in G_j\left[\frac{1}{\varepsilon}\right]} \Kantorovich{\Lip_\D}(\varphi\circ\alpha_j^g\circ\pi_j, \psi\circ\alpha_k^{\varsigma_j(g)}\circ\pi_k)
\end{equation*}
\end{definition}

The magnitude of a covariant tunnel summarizes all the data computed above.

\begin{definition}\label{magnitude-def}
  Let $\varepsilon > 0$. The \emph{$\varepsilon$-magnitude} $\tunnelmagnitude{\tau}{\varepsilon}$ of a $\varepsilon$-covariant tunnel $\tau$ is the maximum of its $\varepsilon$-reach and its extent:
  \begin{equation*}
    \tunnelmagnitude{\tau}{\varepsilon} = \max\left\{ \tunnelreach{\tau}{\varepsilon}, \tunnelextent{\tau} \right\} \text{.}
  \end{equation*}
\end{definition}

\begin{remark}
  If $G_1 = G_2 = \{e\}$ and using the notations of Definition (\ref{magnitude-def}), then we note that for all $\varphi \in \StateSpace(\A)$, since $\varphi\circ\pi_\A\in\StateSpace(\D)$, there exists $\psi \in \StateSpace(\B)$ such that $\Kantorovich{\Lip_1}(\varphi\circ\pi_\A,\psi\circ\pi_\B) \leq \tunnelextent{\tau}$. It is then easy to check that $\tunnelmagnitude{\tau}{\varepsilon} = \tunnelextent{\tau}$ for all $\varepsilon > 0$, and thus we just recover the extent defined in \cite{Latremoliere14} for the propinquity.
\end{remark}

\begin{remark}
  By \cite{Latremoliere13b}, there always exists $F$-tunnels between any two $F$-{\qcms s}. By Remark (\ref{almost-isoiso-exists-rmk}), we conclude that there also always exists covariant $F$-tunnels. 
\end{remark}

We first show that composition of tunnels, up to an arbitrary small error, as defined in \cite{Latremoliere14}, extends to covariant tunnels, thanks in part to our Lemma (\ref{uiso-composition-lemma}). A key aspect of this construction is to ensure that the covariant tunnels obtained by composition have the same quasi-Leibniz property as the covariant tunnels they are built from.

\begin{theorem}\label{triangle-thm}
Let $\mathds{A} = (\A,\Lip_\A,G_1,\delta_1,\alpha)$, $\mathds{B} = (\B,\Lip_\B,G_2,\delta_2,\beta)$ and $\mathds{E} = (\alg{E},\Lip_{\alg{E}},G_3,\delta_3,\gamma)$ be three Lipschitz dynamical $F$-systems for some given permissible function $F$. Let $\varepsilon_1, \varepsilon_2 \in \left(0,\frac{\sqrt{2}}{2}\right)$.

Let $\tau_1 = (\D_1,\Lip_{\D_1},\pi_1,\rho_1,\varsigma_1,\varkappa_1)$ be a $\varepsilon_1$-covariant $F$-tunnel from $\mathds{A}$ to $\mathds{B}$. Let $\tau_2 = (\D_2,\Lip_{\D_2},\pi_2,\rho_2,\varsigma_2,\varkappa_2)$ be a $\varepsilon_2$-covariant $F$-tunnel from $\mathds{B}$ to $\mathds{E}$.

Let $\varepsilon > 0$. We define for $(d_1,d_2) \in \sa{\D} = \sa{\D_1}\oplus\sa{\D_2}$:
\begin{equation*}
\Lip(d_1,d_2) = \max\left\{ \Lip_{\D_1}(d_1), \Lip_{\D_2}(d_2), \frac{1}{\varepsilon}\norm{\rho_\B(d_1) - \pi_\B(d_2)}{\B} \right\} \text{.}
\end{equation*}

We set $\varsigma=\varsigma_2\circ\varsigma_1$ and $\varkappa = \varkappa_1\circ\varkappa_2$. We also set $\eta_1 : (d_1,d_2)\in \D \mapsto \pi_\A(d_1)$ and $\eta_2:(d_1,d_2)\in\D\mapsto \rho_{\alg{E}}(d_2)$. If:
\begin{equation*}
\tau_1\circ_\varepsilon \tau_2 = (\D_1\oplus\D_2,\Lip,\eta_1,\eta_2,\varsigma,\varkappa)
\end{equation*}
then $\tau_1\circ_\varepsilon \tau_2$ is a $(\varepsilon_1 + \varepsilon_2)$-covariant $F$-tunnel from $(\A,\Lip_\A)$ to $(\alg{E},\Lip_{\alg{E}})$ such that:
\begin{equation*}
\tunnelmagnitude{\tau_1\circ_\varepsilon\tau_2}{\varepsilon_1 + \varepsilon_2} \leq \tunnelmagnitude{\tau_1}{\varepsilon_1} + \tunnelmagnitude{\tau_2}{\varepsilon_2} + \varepsilon \text{.}
\end{equation*}
\end{theorem}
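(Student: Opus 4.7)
The plan is to split the verification into three parts: first check that $\tau_1 \circ_\varepsilon \tau_2$ genuinely is an $(\varepsilon_1+\varepsilon_2)$-covariant $F$-tunnel and recover the extent bound from prior work, then bound its $(\varepsilon_1+\varepsilon_2)$-reach by telescoping through the coupling term built into $\Lip$, and finally combine these to obtain the magnitude inequality.

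For the tunnel structure, Lemma~\ref{uiso-composition-lemma} directly yields $(\varsigma,\varkappa) \in \UIso{\varepsilon_1+\varepsilon_2}{(G_1,\delta_1)}{(G_3,\delta_3)}{1/(\varepsilon_1+\varepsilon_2)}$, since $\max\{\varepsilon_1,\varepsilon_2\} < \frac{\sqrt{2}}{2}$. That $(\D_1\oplus\D_2,\Lip)$ is a {\Qqcms{F}}, that $\eta_1$ and $\eta_2$ are quantum isometries onto $(\A,\Lip_\A)$ and $(\alg{E},\Lip_{\alg{E}})$ respectively, and that $\tunnelextent{\tau_1\circ_\varepsilon\tau_2} \leq \tunnelextent{\tau_1}+\tunnelextent{\tau_2}+\varepsilon$, are all the content of the (non-covariant) tunnel composition from \cite{Latremoliere14}, applied to the underlying tunnels $(\D_1,\Lip_{\D_1},\pi_\A,\rho_\B)$ and $(\D_2,\Lip_{\D_2},\pi_\B,\rho_{\alg{E}})$. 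The only point that genuinely requires checking is that adding $\frac{1}{\varepsilon}\norm{\rho_\B(d_1)-\pi_\B(d_2)}{\B}$ inside the max does not break the $F$-quasi-Leibniz inequality, which is handled there using that $\rho_\B$ and $\pi_\B$ are $\ast$-homomorphisms of C*-norm one. I would simply invoke that construction rather than redo it.

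The genuinely new ingredient is the reach estimate. Fix $\varphi \in \StateSpace(\A)$; only the case $\{j,k\}=\{1,3\}$ with $j=1$ is treated, the other case being symmetric. Given $\eta>0$, pick $\psi \in \StateSpace(\B)$ almost achieving the infimum in the definition of $\tunnelreach{\tau_1}{\varepsilon_1}$ against $\varphi$, and then $\omega \in \StateSpace(\alg{E})$ almost achieving the infimum in $\tunnelreach{\tau_2}{\varepsilon_2}$ against $\psi$. For $g \in G_1[1/(\varepsilon_1+\varepsilon_2)]$, one has $g \in G_1[1/\varepsilon_1]$ and, by the ball-inclusion step inside the proof of Lemma~\ref{uiso-composition-lemma}, $\varsigma_1(g) \in G_2[1/\varepsilon_2]$. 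For any $(d_1,d_2) \in \sa{\D_1}\oplus\sa{\D_2}$ with $\Lip(d_1,d_2)\leq 1$, I would telescope
\begin{equation*}
\varphi(\alpha^g(\pi_\A(d_1))) \longrightarrow \psi(\beta^{\varsigma_1(g)}(\rho_\B(d_1))) \longrightarrow \psi(\beta^{\varsigma_1(g)}(\pi_\B(d_2))) \longrightarrow \omega(\gamma^{\varsigma(g)}(\rho_{\alg{E}}(d_2)))\text{.}
\end{equation*}
The outer two jumps are bounded by $\tunnelreach{\tau_1}{\varepsilon_1}+\eta$ and $\tunnelreach{\tau_2}{\varepsilon_2}+\eta$ by applying the respective reach definitions to $d_1$ and $d_2$, both of which have their $\Lip_{\D_j}$-seminorm at most $1$. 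The middle jump equals $|\psi(\beta^{\varsigma_1(g)}(\rho_\B(d_1)-\pi_\B(d_2)))|$, which is at most $\norm{\rho_\B(d_1)-\pi_\B(d_2)}{\B} \leq \varepsilon$ because $\beta^{\varsigma_1(g)}$ is a unital positive map, hence contractive on the self-adjoint part of $\B$, and by the very definition of $\Lip$. Passing to the supremum over $(d_1,d_2)$, then over $g$, and letting $\eta \to 0^+$ yields $\tunnelreach{\tau_1\circ_\varepsilon\tau_2}{\varepsilon_1+\varepsilon_2} \leq \tunnelreach{\tau_1}{\varepsilon_1}+\tunnelreach{\tau_2}{\varepsilon_2}+\varepsilon$; combined with the extent bound, the magnitude inequality follows.

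The main obstacle, in my view, is bookkeeping: the intermediate functionals $\psi\circ\beta^{\varsigma_1(g)}\circ\rho_\B$ and $\psi\circ\beta^{\varsigma_1(g)}\circ\pi_\B$ live on $\D_1$ and $\D_2$ respectively, so they are not themselves states on $\D_1\oplus\D_2$; the entire purpose of the third entry in the definition of $\Lip$ is to control their discrepancy at the level of a single element $(d_1,d_2)$ of the direct sum, and setting this pairing up correctly is exactly what makes the telescoping legitimate.
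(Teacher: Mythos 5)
Your proposal is correct and follows essentially the same route as the paper's proof: the underlying tunnel structure and extent bound are delegated to the composition theorem of \cite{Latremoliere14}, the almost isometric isomorphism is composed via Lemma~(\ref{uiso-composition-lemma}), and the reach is bounded by exactly the same three-term telescoping through $\psi\circ\beta^{\varsigma_1(g)}$, with the middle jump controlled by the coupling term $\frac{1}{\varepsilon}\norm{\rho_\B(d_1)-\pi_\B(d_2)}{\B}$ in $\Lip$ and the contractivity of positive unital maps on self-adjoint elements. The only (harmless) deviation is your use of states chosen within $\eta$ of the infima followed by $\eta\to 0^+$, where the paper asserts the existence of states attaining the reach bounds directly.
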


\begin{proof}
We will write $\tau = \tau_1\circ_\varepsilon\tau_2$ and $\D = \D_1\oplus\D_2$ in this proof to lighten our notation. The quadruple $(\D,\Lip,\eta_1,\eta_2)$ is indeed an $F$-tunnel from $(\A,\Lip_\A)$ to $(\alg{E},\Lip_{\alg{E}})$ with $\tunnelextent{\tau} \leq \tunnelextent{\tau_1} + \tunnelextent{\tau_2} + \varepsilon$ by \cite[Theorem 3.1]{Latremoliere14}.

Now, we let $\varsigma = \varsigma_2\circ\varsigma_1$ and let $\varkappa = \varkappa_1\circ\varkappa_2$. It then follows by Lemma (\ref{uiso-composition-lemma}) that:
\begin{equation*}
  (\varsigma,\varkappa) \in \UIso{\varepsilon_1 + \varepsilon_2}{(G_1,\delta_1)}{(G_3,\delta_3)}{\frac{1}{\varepsilon_1 + \varepsilon_2}} \text{,}
\end{equation*}
and therefore
\begin{equation*}
\left(\D_1 \oplus \D_2, \Lip, \eta_1, \eta_2, \varsigma, \varkappa\right)
\end{equation*}
is an $(\varepsilon_1 + \varepsilon_2)$-covariant $F$-tunnel. It remains to compute the reach of $\tau$ for $r = \frac{1}{\varepsilon_1 + \varepsilon_2}$.

Let $\varphi \in \StateSpace(\A)$. There exists $\psi \in \StateSpace(\B)$ such that:
\begin{equation*}
  \sup_{g \in G_1[r]} \Kantorovich{\Lip_1}(\varphi\circ\alpha^g\circ\pi_\A,\psi\circ\beta^{\varsigma_1(g)}\circ\pi_\B) \leq \tunnelreach{\tau_1}{\varepsilon_1} \text{.}
\end{equation*}

There exists $\theta\in \StateSpace(\alg{E})$ such that:
\begin{equation*}
  \sup_{g \in G_2\left[\frac{1}{\varepsilon_2}\right]} \Kantorovich{\Lip_1}(\psi\circ\beta^g\circ\rho_\B,\theta\circ\gamma^{\varsigma_2(g)}\circ\pi_{\alg{E}}) \leq \tunnelreach{\tau_2}{\varepsilon_2} \text{.}
\end{equation*}
Therefore, noting $\beta^g$ is a positive unital map for all $g \in G_2$, and thus a linear map of norm $1$, and that as in Lemma (\ref{uiso-composition-lemma}), if $g \in G_1[r]$ then $\varsigma_1(g) \in G_2[r+\varepsilon_2] = G_2\left[\frac{1}{\varepsilon_2}\right]$ and:
\begin{multline*}
  \left|\varphi\circ\alpha^g\circ\pi_\A(d_1) - \theta\circ\gamma^{\varsigma(g)}\circ\rho_{\alg{E}}(d_2)\right| \\ 
  \begin{split}
    &\leq \left|\varphi\circ\alpha^g\circ\pi_\A(d_1) - \psi\circ\beta^{\varsigma_1(g)}\circ\pi_{\B}(d_1)\right|  \\
    &\quad + \left| \psi\circ\beta^{\varsigma_1(g)}\circ\pi_{\B}(d_1) - \psi\circ\beta^{\varsigma_1(g)}\circ\rho_\B(d_2) \right| \\
    &\quad + \left|\psi\circ\beta^{\varsigma_1(g)}\circ\rho_\B(d_2) - \theta\circ\gamma^{\varsigma(g)}\circ\rho_{\alg{E}}(d_2)\right| \\
    &\leq \tunnelreach{\tau_1}{\varepsilon_1} + \norm{\pi_\B(d_1) - \rho_\B(d_2)}{\B} \\
    &\quad + \left|\psi\circ\beta^{\varsigma_1(g)}\circ\rho_\B(d_2) - \theta\circ\gamma^{\varsigma_2(\varsigma_1(g))}\circ\rho_{\alg{E}}(d_2)\right|\\
    &\leq \tunnelreach{\tau_1}{\varepsilon_1} + \varepsilon + \tunnelreach{\tau_2}{\varepsilon_2} \text{.}
  \end{split}
\end{multline*}
Therefore, as the computation is symmetric in $\mathds{A}$ and $\mathds{E}$, we conclude:
\begin{equation*}
  \tunnelreach{\tau}{\varepsilon_1 + \varepsilon_2} \leq \tunnelreach{\tau_1}{\varepsilon_1} + \tunnelreach{\tau_2}{\varepsilon_2} + \varepsilon \text{.}
\end{equation*}

Hence:
\begin{equation*}
\tunnelmagnitude{\tau}{\varepsilon_1 + \varepsilon_2} \leq \tunnelmagnitude{\tau_1}{\varepsilon_1} + \tunnelmagnitude{\tau_2}{\varepsilon_2} + \varepsilon \text{.}
\end{equation*}

This concludes our proof.
\end{proof}

\begin{remark}
  The fact that the actions do not actually enter the construction of the composition of covariant tunnels is important as it allows us to work with actions by non-multiplicative maps, without worry that this would compromise the quasi-Leibniz property of the composed tunnel.
\end{remark}

As with the dual propinquity \cite{Latremoliere13b,Latremoliere14}, we can enforce additional properties on the covariant tunnels used to define the covariant propinquity. The motivation for this flexibility is that it becomes possible to make sure that when two Lipschitz dynamical systems are close for a chosen specialization of the covariant propinquity, then the tunnels has desirable properties for the problem at hand. In order to define the covariant propinquity, we require certain properties on the choice of a class of covariant tunnels --- so that our construction in this paper indeed leads to a metric up to equivariant full quantum isometry, as seen later on.

\begin{definition}\label{appropriate-def}
  Let $F$ be a permissible function. Let $\mathcal{C}$ be a nonempty class of Lipschitz dynamical $F$-systems. A class $\mathcal{T}$ of covariant $F$-tunnels is \emph{appropriate} for $\mathcal{C}$ when:
  \begin{enumerate}
    \item for all $\mathds{A},\mathds{B} \in \mathcal{C}$, there exists a $\varepsilon$-covariant tunnel from $\mathds{A}$ to $\mathds{B}$ for some $\varepsilon > 0$,
    \item if $\tau \in \mathcal{T}$, then there exist $\mathds{A},\mathds{B} \in \mathcal{C}$ such that $\tau$ is a covariant tunnel from $\mathds{A}$ to $\mathds{B}$,
    \item if $\mathds{A} = (\A,\Lip_\A,G,\delta_G,\alpha)$, $\mathds{B} = (\B,\Lip_\B,H,\delta_H,\beta)$ are elements of $\mathcal{C}$, and if there exists an equivariant full quantum isometry $(\pi,\varsigma)$ from $\mathds{A}$ to $\mathds{B}$, then:
      \begin{equation*}
        \left( \A,\Lip_\A,\mathrm{id}_\A,\pi,\varsigma,\varsigma^{-1} \right), \left( \B,\Lip_\B,\pi^{-1},\mathrm{id}_\B,\varsigma^{-1},\varsigma \right)  \in \mathcal{T}
      \end{equation*}
      where $\mathrm{id}_\A$, $\mathrm{id}_\A$ are the identity *-automorphisms of $\A$ and $\B$, respectively,
    \item if $\tau = (\D,\Lip,\pi,\rho,\varsigma,\varkappa) \in \mathcal{T}$ then $\tau^{-1} = (\D,\Lip,\rho,\pi,\varkappa,\varsigma) \in \mathcal{T}$,
    \item if $\varepsilon > 0$ and if $\tau_1,\tau_2 \in \mathcal{T}$ are $\frac{\sqrt{2}}{2}$-tunnels, then there exists $\delta \in (0,\varepsilon]$ such that $\tau_1\circ_\delta \tau_2 \in \mathcal{T}$.
  \end{enumerate}
\end{definition}

\begin{proposition}
  If $F$ is a permissible function, then the class of all $F$-covariant tunnels is appropriate for the class of all Lipschitz dynamical $F$-systems.
\end{proposition}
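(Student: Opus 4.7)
The plan is to verify each of the five conditions in Definition (\ref{appropriate-def}) in turn for $\mathcal{T}$ equal to the class of all $F$-covariant tunnels and $\mathcal{C}$ equal to the class of all Lipschitz dynamical $F$-systems. Condition (1) is essentially the existence statement recorded in the remark following Definition (\ref{magnitude-def}): for any two $F$-{\qcms s} there is an $F$-tunnel by the corresponding existence result from \cite{Latremoliere13b}, and Remark (\ref{almost-isoiso-exists-rmk}) supplies an $r$-local $\varepsilon$-almost isometric isomorphism between any two proper monoids (in fact with $\varepsilon = \sqrt{3}$, $r = \frac{\sqrt 3}{3}$). Combining these two ingredients produces an $\varepsilon$-covariant $F$-tunnel between any two Lipschitz dynamical $F$-systems for some $\varepsilon > 0$. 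Condition (2) is immediate from Definition (\ref{equi-tunnel-def}), since the domain and codomain Lipschitz dynamical $F$-systems are part of the data of a covariant $F$-tunnel.

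For Condition (3), I would fix an equivariant full quantum isometry $(\pi,\varsigma)$ between $\mathds{A} = (\A,\Lip_\A,G,\delta_G,\alpha)$ and $\mathds{B} = (\B,\Lip_\B,H,\delta_H,\beta)$ and check directly that $(\A,\Lip_\A,\mathrm{id}_\A,\pi,\varsigma,\varsigma^{-1})$ is a covariant $F$-tunnel: the underlying quadruple $(\A,\Lip_\A,\mathrm{id}_\A,\pi)$ is an $F$-tunnel because $\mathrm{id}_\A$ is trivially a quantum isometry and $\pi$ is one by the definition of a full quantum isometry ($\Lip_\B\circ\pi = \Lip_\A$), while $(\varsigma,\varsigma^{-1}) \in \UIso{\varepsilon}{(G,\delta_G)}{(H,\delta_H)}{1/\varepsilon}$ for every $\varepsilon > 0$ since $\varsigma$ is a monoid isometric isomorphism, exactly as observed at the end of the proof of Theorem (\ref{upsilon-metric-thm}). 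The other quintuple is handled symmetrically. Condition (4) follows from the fact that Definition (\ref{equi-tunnel-def}) and Definition (\ref{almost-iso-def}) are symmetric in their two arguments, so reversing the roles of the two quantum isometries and of the two components of the almost isometric isomorphism yields another $\varepsilon$-covariant $F$-tunnel.

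The last condition is where essentially all the real content sits, and it is a direct consequence of Theorem (\ref{triangle-thm}). Given $\varepsilon > 0$ and two $\frac{\sqrt{2}}{2}$-covariant $F$-tunnels $\tau_1$ and $\tau_2$ in $\mathcal{T}$, I would simply choose any $\delta \in (0,\varepsilon]$ (say $\delta = \varepsilon$) and apply Theorem (\ref{triangle-thm}) to conclude that $\tau_1\circ_\delta\tau_2$ is an $(\varepsilon_1+\varepsilon_2)$-covariant $F$-tunnel, hence in $\mathcal{T}$; Theorem (\ref{triangle-thm}) also packages the preservation of the $F$-quasi-Leibniz property (the L-seminorm on $\D_1\oplus\D_2$ is built as a maximum of the constituent L-seminorms together with a norm-type term, which automatically satisfies the $F$-quasi-Leibniz inequality because each summand does and because the norm term contributes nothing to the Jordan or Lie product bound beyond the inherited one). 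Since no additional restriction on the covariant tunnels is being imposed in this proposition, there is no further condition to verify, and this completes the verification. The only genuinely delicate step is Condition (5), and that step has already been carried out in Theorem (\ref{triangle-thm}); the rest reduces to unpacking definitions.
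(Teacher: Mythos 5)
Your proposal is correct and follows essentially the same route as the paper: Conditions (1)--(4) of Definition (\ref{appropriate-def}) are dispatched by unpacking the definitions (existence via the remark following the definition of magnitude together with Remark (\ref{almost-isoiso-exists-rmk}), the quintuples built from an equivariant full quantum isometry checked directly, symmetry of the data for Condition (4)), while the only substantive point, Condition (5), is delegated to Theorem (\ref{triangle-thm}), exactly as in the paper's proof. The additional detail you supply merely fills in what the paper treats as straightforward, so no further comment is needed.
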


\begin{proof}
  We already observed that Condition (1) of Definition (\ref{appropriate-def}) holds and Condition (2) is trivial here. Condition (3) of Definition (\ref{appropriate-def}) is met since it is straightforward to check that the quintuple listed there are indeed $F$-tunnels. Condition (4) is trivial here. Last, Condition (5) of Definition (\ref{appropriate-def}) is non-trivial, and is met precisely thanks to Theorem (\ref{triangle-thm}).
\end{proof}

We summarize a recurrent assumption to many statements in the rest of this section to improve our presentation.
\begin{hypothesis}\label{hyp}
  Let $F$ be a permissible function. Let $\mathcal{C}$ be a nonempty class of Lipschitz dynamical $F$-systems and $\mathcal{T}$ be a class of $F$-tunnels appropriate for $\mathcal{C}$.
\end{hypothesis}

We are now ready to define the main object of this paper, which is a metric on the class of Lipschitz dynamical systems.

\begin{notation}
  Assume Hypothesis (\ref{hyp}). Let $\mathds{A}$ and $\mathds{B}$ in $\mathcal{C}$. Let $\varepsilon > 0$. The class of all $\varepsilon$-covariant tunnels in $\mathcal{T}$ from $\mathds{A}$ to $\mathds{B}$ is denoted as:
  \begin{equation*}
    \tunnelset{\mathds{A}}{\mathds{B}}{\mathcal{T}}{\varepsilon} \text{.}
  \end{equation*}
We write $\tunnelset{\mathds{A}}{\mathds{B}}{F}{\varepsilon}$ for the set of \emph{all} $\varepsilon$-covariant $F$-tunnels from $\mathds{A}$ to $\mathds{B}$.
\end{notation}

\begin{definition}\label{covariant-propinquity-def}
  Assume Hypothesis (\ref{hyp}). For $\mathds{A},\mathds{B} \in \mathcal{C}$, the \emph{covariant $\mathcal{T}$-propinquity} $\covpropinquity{\mathcal{T}}(\mathds{A},\mathds{B})$ is defined as:
\begin{equation*}
  \min\left\{ \frac{\sqrt{2}}{2}, \inf\left\{ \varepsilon > 0 \middle\vert \exists \tau \in \tunnelset{\mathds{A}}{\mathds{B}}{\mathcal{T}}{\varepsilon} \quad \tunnelmagnitude{\tau}{\varepsilon} \leq \varepsilon \right\} \right\} \text{.}
\end{equation*}
\end{definition}

\begin{notation}
  We write $\covpropinquity{F}$ for the covariant propinquity on the class of all Lipschitz dynamical $F$-systems, using all possible covariant $F$-tunnels.
\end{notation}

We conclude from our result on tunnels composition:

\begin{proposition}\label{pseudo-prop}
  Assume Hypothesis (\ref{hyp}). The covariant propinquity $\covpropinquity{\mathcal{T}}$ is a pseudo-metric, bounded above by $\frac{\sqrt{2}}{2}$, on the class of Lipschitz dynamical systems, and moreover, for all Lipschitz dynamical systems $\mathds{A} = (\A,\Lip_\A,G,\delta_G,\alpha)$ and $\mathds{B} = (\B,\Lip_\B,H,\delta_H,\beta)$:
  \begin{equation*}
    \min\left\{\propinquity{\mathcal{T}'}((\A,\Lip_\A),(\B,\Lip_\B)), \frac{\sqrt{2}}{2} \right\} \leq \covpropinquity{\mathcal{T}}(\mathds{A},\mathds{B}) \text{,}
  \end{equation*}
where $\mathcal{T'} = \left\{ (\D,\Lip,\pi,\rho) : \exists \tau = (\D,\Lip,\pi,\rho,\varsigma,\varkappa) \in \mathcal{T} \right\}$ is a class of tunnels appropriate with $\{(\A,\Lip_\A) : \exists (\A,\Lip_\A,G,\delta,\alpha) \in \mathcal{C}\}$.
\end{proposition}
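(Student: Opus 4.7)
The argument splits into verifying the pseudo-metric axioms for $\covpropinquity{\mathcal{T}}$, then the comparison with $\propinquity{\mathcal{T}'}$ together with the appropriateness of $\mathcal{T}'$.

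Non-negativity and the upper bound $\frac{\sqrt{2}}{2}$ are built into Definition (\ref{covariant-propinquity-def}). For symmetry, observe that the class of tunnels in Definitions (\ref{extent-def}) and (\ref{reach-def}) depends only on the data of a covariant tunnel in a way symmetric in the two endpoints: if $\tau = (\D,\Lip,\pi,\rho,\varsigma,\varkappa)$, then swapping the roles of $\pi,\rho$ and $\varsigma,\varkappa$ yields $\tunnelmagnitude{\tau^{-1}}{\varepsilon} = \tunnelmagnitude{\tau}{\varepsilon}$, and $\tau^{-1} \in \mathcal{T}$ by Condition (4) of Definition (\ref{appropriate-def}). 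Hence the infima defining $\covpropinquity{\mathcal{T}}(\mathds{A},\mathds{B})$ and $\covpropinquity{\mathcal{T}}(\mathds{B},\mathds{A})$ coincide. For reflexivity, given $\mathds{A} = (\A,\Lip_\A,G,\delta,\alpha) \in \mathcal{C}$, Condition (3) of Definition (\ref{appropriate-def}) applied to the identity equivariant full quantum isometry places the tunnel $\iota = (\A,\Lip_\A,\mathrm{id}_\A,\mathrm{id}_\A,\mathrm{id}_G,\mathrm{id}_G)$ in $\mathcal{T}$; this $\iota$ is an $\varepsilon$-covariant tunnel for every $\varepsilon>0$ since $(\mathrm{id}_G,\mathrm{id}_G) \in \UIso{\varepsilon}{(G,\delta)}{(G,\delta)}{1/\varepsilon}$ trivially, and both its extent and reach vanish, whence $\covpropinquity{\mathcal{T}}(\mathds{A},\mathds{A}) = 0$.

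The triangle inequality is where Condition (5) of Definition (\ref{appropriate-def}) and Theorem (\ref{triangle-thm}) pull their weight. Given $\mathds{A},\mathds{B},\mathds{E} \in \mathcal{C}$ with $s := \covpropinquity{\mathcal{T}}(\mathds{A},\mathds{B}) + \covpropinquity{\mathcal{T}}(\mathds{B},\mathds{E}) < \frac{\sqrt{2}}{2}$ (else there is nothing to show), fix $\eta > 0$ with $s + \eta < \frac{\sqrt{2}}{2}$ and pick $\varepsilon_1 > \covpropinquity{\mathcal{T}}(\mathds{A},\mathds{B})$, $\varepsilon_2 > \covpropinquity{\mathcal{T}}(\mathds{B},\mathds{E})$ such that $\varepsilon_1 + \varepsilon_2 < s + \eta/2$ and admitting covariant tunnels $\tau_1, \tau_2 \in \mathcal{T}$ with $\tunnelmagnitude{\tau_j}{\varepsilon_j} \leq \varepsilon_j$. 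Condition (5) of Definition (\ref{appropriate-def}) furnishes $\delta \in (0,\eta/2]$ with $\tau_1 \circ_\delta \tau_2 \in \mathcal{T}$, and Theorem (\ref{triangle-thm}) certifies it as an $(\varepsilon_1+\varepsilon_2)$-covariant tunnel of magnitude at most $\varepsilon_1+\varepsilon_2+\delta \leq s + \eta$. Letting $\eta\downarrow 0$ delivers $\covpropinquity{\mathcal{T}}(\mathds{A},\mathds{E}) \leq s$ as required.

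For the final comparison with the propinquity, note that by Definitions (\ref{extent-def}) and (\ref{magnitude-def}) we have $\tunnelextent{(\D,\Lip,\pi,\rho)} = \tunnelextent{\tau} \leq \tunnelmagnitude{\tau}{\varepsilon}$ for any covariant tunnel $\tau = (\D,\Lip,\pi,\rho,\varsigma,\varkappa)$, and the underlying quadruple lies in $\mathcal{T}'$ by definition of $\mathcal{T}'$. Consequently, every $\varepsilon$ witnessing $\covpropinquity{\mathcal{T}}(\mathds{A},\mathds{B}) < \varepsilon$ also witnesses $\propinquity{\mathcal{T}'}((\A,\Lip_\A),(\B,\Lip_\B)) \leq \varepsilon$, yielding the stated inequality capped at $\frac{\sqrt{2}}{2}$. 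That $\mathcal{T}'$ is an appropriate class of tunnels in the sense of \cite{Latremoliere14} is verified by transposing each clause of Definition (\ref{appropriate-def}) to the non-covariant setting: existence of tunnels and closure under inversion descend directly, the identity full quantum isometry gives the required trivial tunnels, and closure under (approximate) composition transfers from Condition (5) of Definition (\ref{appropriate-def}) because, by construction in Theorem (\ref{triangle-thm}), the underlying quadruple of $\tau_1 \circ_\delta \tau_2$ is the ordinary composition of the underlying tunnels.

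\textbf{Main obstacle.} The bookkeeping is uniformly light; the only non-routine point is ensuring the $\frac{\sqrt{2}}{2}$ cap interacts properly with the triangle inequality and with Condition (5) of Definition (\ref{appropriate-def}) (which requires the input tunnels to be $\frac{\sqrt{2}}{2}$-tunnels). The case split on whether the sum of the two distances already exceeds $\frac{\sqrt{2}}{2}$ handles this cleanly, so there is no genuine difficulty beyond invoking Theorem (\ref{triangle-thm}) and Lemma (\ref{uiso-composition-lemma}) at the right threshold.
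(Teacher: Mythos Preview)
Your proof is correct and follows essentially the same approach as the paper: the triangle inequality is obtained by composing covariant tunnels via Theorem (\ref{triangle-thm}) and Condition (5) of Definition (\ref{appropriate-def}), and the comparison with $\propinquity{\mathcal{T}'}$ comes from the trivial inequality $\tunnelextent{\tau}\leq\tunnelmagnitude{\tau}{\varepsilon}$. You are somewhat more explicit than the paper in treating reflexivity, symmetry, and the appropriateness of $\mathcal{T}'$ (the paper dispatches these as ``obvious'' or ``easy to check''), but the substance is identical.
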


\begin{proof}
  Symmetry is obvious by definition. Let $\mathds{A}  = (\A,\Lip_\A,G_1,\delta_1,\alpha)$, $\mathds{B} = (\B,\Lip_\B,\allowbreak G_2,\delta_2,\beta)$, and $\mathds{E} = (\alg{E},\Lip_{\alg{E}},G_3,\delta_3,\gamma)$ be three dynamical systems. If $\covpropinquity{\mathcal{T}}(\mathds{A},\mathds{B}) \geq \frac{\sqrt{2}}{2}$ or $\covpropinquity{\mathcal{T}}(\mathds{B},\mathds{E}) \geq  \frac{\sqrt{2}}{2}$ then by definition:
\begin{equation*}
  \covpropinquity{\mathcal{T}}(\mathds{A},\mathds{E})\leq \covpropinquity{\mathcal{T}}(\mathds{A},\mathds{B})  + \covpropinquity{\mathcal{T}}(\mathds{B},\mathds{E})\text{.}
\end{equation*}
Hence, we now assume that both $\covpropinquity{\mathcal{T}}(\mathds{A},\mathds{B}) < \frac{\sqrt{2}}{2}$ and $\covpropinquity{\mathcal{T}}(\mathds{B},\mathds{E}) <  \frac{\sqrt{2}}{2}$. Write $\upsilon_1 = \covpropinquity{\mathcal{T}}(\mathds{A},\mathds{B})$ and $\upsilon_2 = \covpropinquity{\mathcal{T}}(\mathds{B},\mathds{E})$.

Let $\varepsilon > 0$ such that $\max\{\upsilon_1+\frac{\varepsilon}{3}, \upsilon_2 + \frac{\varepsilon}{3} \} < \frac{\sqrt{2}}{2}$. By definition, there exist:
  \begin{equation*}
    \tau_1 \in \tunnelset{\mathds{A}}{\mathds{B}}{\mathcal{T}}{\upsilon_1+\frac{\varepsilon}{3}}\text{ and }\tau_2 \in \tunnelset{\mathds{B}}{\mathds{E}}{\mathcal{T}}{\upsilon_2+\frac{\varepsilon}{3}}
  \end{equation*}
  such that:
  \begin{equation*}
    \tunnelmagnitude{\tau_1}{\upsilon_1 + \frac{\varepsilon}{3}} \leq  \upsilon_1 + \frac{\varepsilon}{3} \text{ and }\tunnelmagnitude{\tau_2}{\upsilon_2 + \frac{\varepsilon}{3}} \leq \upsilon_2 + \frac{\varepsilon}{3} \text{.}
  \end{equation*}
As $\mathcal{T}$ is appropriate, for some $\delta \in (0,\varepsilon]$, we have $\tau = \tau_1\circ_\delta \tau_2 \in \mathcal{T}$. By Theorem (\ref{triangle-thm}), the tunnel $\tau$ is an $(\upsilon_1+\upsilon_2+\varepsilon)$-covariant tunnel $\tau$ from $\mathds{A}$ to $\mathds{E}$ such that:
\begin{equation*}
  \tunnelmagnitude{\tau}{\upsilon_1 + \upsilon_2 + \varepsilon} \leq  \tunnelmagnitude{\tau_1}{\upsilon_1 + \frac{\varepsilon}{3}} + \tunnelmagnitude{\tau_2}{\upsilon_2 + \frac{\varepsilon}{3}} + \frac{\varepsilon}{3} \leq \upsilon_1 + \upsilon_2 + \varepsilon \text{.}
\end{equation*}

Therefore by definition:
\begin{equation*}
  \covpropinquity{\mathcal{T}}(\mathds{A},\mathds{E}) \leq \upsilon_1 + \upsilon_2 + \varepsilon = \covpropinquity{\mathcal{T}}(\mathds{A},\mathds{B}) + \covpropinquity{\mathcal{T}}(\mathds{B},\mathds{E}) + \varepsilon \text{.}
\end{equation*}
As $\varepsilon > 0$ is arbitrary, this concludes our proof of the triangle inequality.

To conclude the proof of our proposition, we observe that if $\tau = (\D,\Lip,\pi_\A,\pi_\B,\allowbreak \varsigma,\varkappa)$ is a $\varepsilon$-covariant tunnel from $\mathds{A}$ to $\mathds{B}$, then in particular $\gamma = (\D,\Lip,\pi_\A,\pi_\B)$ is a tunnel in $\mathcal{T}'$ and:
\begin{equation*}
  \tunnelextent{\gamma} = \tunnelextent{\tau} \leq \tunnelmagnitude{\tau}{\varepsilon}\text{.}
\end{equation*}
It is also easy to check that $\mathcal{T}'$ is appropriate for the class of underlying {\qcms s} of $\mathcal{C}$. It then follows by definition that:
\begin{equation*}
  \propinquity{\mathcal{T}'}((\A,\Lip_\A),(\B,\Lip_\B)) \leq \inf\left\{\varepsilon > 0 \middle\vert \exists \tau \in \tunnelset{\mathds{A}}{\mathds{B}}{\mathcal{T}}{\varepsilon} \quad \tunnelmagnitude{\tau}{\varepsilon} \leq \varepsilon \right\} \text{.}
\end{equation*}
Therefore, by definition:
\begin{equation*}
\min\left\{\propinquity{\mathcal{T}'}((\A,\Lip_\A),(\B,\Lip_\B)),\frac{\sqrt{2}}{2}\right\} \leq \covpropinquity{\mathcal{T}}(\mathds{A},\mathds{B})
\end{equation*}
as desired.
\end{proof}

We now prove that distance zero for the covariant propinquity is equivalent to the existence of an equivariant full quantum isometry. We will use our work in \cite{Latremoliere13b}; in particular we recall the notion of a target set for a tunnel and a couple of their properties which we will use here.

Let $(\A,\Lip_\A)$ and $(\B,\Lip_\B)$ be two {\qcms s}. Let $\tau = (\D,\Lip_\D,\pi_\A,\pi_\B)$ be a tunnel from $(\A,\Lip_\A)$ to $(\B,\Lip_\B)$. For any $a\in \dom{\Lip_\A}$ and $l\geq\Lip_\A(a)$, the \emph{$l$-target set of $a$} is defined by:
\begin{equation*}
  \targetsettunnel{\tau}{a}{l} = \left\{ \pi_\B(d) \middle\vert d\in \sa{\D}, \Lip_\D(d) \leq l, \pi_\A(d) = a \right\} \text{.} 
\end{equation*}
Now, if $\tau = (\D,\Lip_\D,\pi_\A,\pi_\B,\varsigma,\varkappa)$ is a covariant tunnel, then for all $a\in\sa{\A}$ and $l \geq \Lip_\A(a)$, by a mild abuse of notations, we write $\targetsettunnel{\tau}{a}{l}$ for $\targetsettunnel{\tau'}{a}{l}$ where $\tau' = (\D,\Lip_\D,\pi_\A,\pi_\B)$.

Moreover, we denote $(\D,\Lip_\D,\pi_\B,\pi_\A,\varkappa,\varsigma)$ as $\tau^{-1}$. 

Now, by \cite[Corollary 4.5]{Latremoliere13b},\cite[Proposition 2.12]{Latremoliere14}, if $a, a' \in \dom{\Lip_\A}$ and $l\geq \max\{\Lip_\A(a),\Lip_\A(a')\}$, and if $b\in\targetsettunnel{a}{\tau}{l}$ and $b' \in \targetsettunnel{\tau}{a'}{l}$ then:
\begin{equation*}
  \norm{b - b'}{\B} \leq \norm{a - a'}{\A} + 2 l \tunnelextent{\tau} \text{.}
\end{equation*}

We now prove the coincidence property for our covariant propinquity.

\begin{theorem}
  Assume Hypothesis (\ref{hyp}). If $(\A,\Lip_\A,G,\delta_G,\alpha)$ and $(\B,\Lip_\B,H,\delta_H,\allowbreak \beta)$ in $\mathcal{C}$ then:
   \begin{equation*}
     \covpropinquity{\mathcal{T}}((\A,\Lip_\A,G,\delta_G,\alpha),(\B,\Lip_\B,H,\delta_H,\beta)) = 0
   \end{equation*}
if and only if there exists a full quantum isometry $\pi : (\A,\Lip_\A)\rightarrow(\B,\Lip_\B)$ and an isometric isomorphism of monoids $\varsigma: G\rightarrow H$ such that:
   \begin{equation*}
     \forall g \in G \quad \varphi\circ\alpha^g = \beta^{\varsigma(g)}\circ\varphi \text{.}
   \end{equation*}
i.e. $(\A,\Lip_\A,G,\delta_G,\alpha)$ and $(\B,\Lip_\B,H,\delta_H,\beta)$ are isomorphic as Lipschitz dynamical systems.
\end{theorem}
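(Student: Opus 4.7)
Suppose there is an equivariant full quantum isometry $(\pi,\varsigma):\mathds{A}\to\mathds{B}$. Condition (3) of Definition \ref{appropriate-def} places $\tau = (\A,\Lip_\A,\mathrm{id}_\A,\pi,\varsigma,\varsigma^{-1})$ in $\mathcal{T}$, and because $\varsigma$ is an isometric isomorphism, $(\varsigma,\varsigma^{-1}) \in \UIso{\varepsilon}{(G,\delta_G)}{(H,\delta_H)}{1/\varepsilon}$ for every $\varepsilon > 0$, so $\tau$ qualifies as an $\varepsilon$-covariant tunnel for every such $\varepsilon$. Its extent is zero (both dual maps $\mathrm{id}_\A^\ast$ and $\pi^\ast$ are bijections onto $\StateSpace(\A)$); its reach is zero as well, because taking $\psi = \varphi\circ\pi^{-1}$ for a given $\varphi \in \StateSpace(\A)$ converts equivariance into $\varphi\circ\alpha^g\circ\mathrm{id}_\A = \psi\circ\beta^{\varsigma(g)}\circ\pi$ for every $g\in G$. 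Hence $\tunnelmagnitude{\tau}{\varepsilon}=0$ for every $\varepsilon>0$, and $\covpropinquity{\mathcal{T}}(\mathds{A},\mathds{B})=0$.

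\textbf{Nontrivial direction: separate limits.} Assume $\covpropinquity{\mathcal{T}}(\mathds{A},\mathds{B})=0$ and pick $\tau_n = (\D_n,\Lip_n,\pi_n,\rho_n,\varsigma_n,\varkappa_n)\in\mathcal{T}$ with $\tunnelmagnitude{\tau_n}{1/(n+1)}\leq 1/(n+1)$. On the algebraic side, the underlying tunnels $(\D_n,\Lip_n,\pi_n,\rho_n)$ have extent at most $1/(n+1)$, so Proposition \ref{pseudo-prop} combined with the coincidence property of the dual propinquity from \cite{Latremoliere13b} yields, after a subsequence extraction, a full quantum isometry $\pi:(\A,\Lip_\A)\to(\B,\Lip_\B)$: concretely, for each $a \in \dom{\Lip_\A}$ the target sets $\targetsettunnel{\tau_n}{a}{\Lip_\A(a)}$ contract in norm to $\{\pi(a)\}$ along a diagonal subsequence built from a countable dense subset of $\dom{\Lip_\A}$. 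On the monoid side, $(\varsigma_n,\varkappa_n)\in\UIso{1/(n+1)}{(G,\delta_G)}{(H,\delta_H)}{n+1}$, so $\Upsilon((G,\delta_G),(H,\delta_H))=0$, and the diagonal argument already carried out inside the proof of Theorem \ref{upsilon-metric-thm} delivers, after a further subsequence, an isometric monoid isomorphism $\varsigma:G\to H$ with $\varsigma_n(g)\to \varsigma(g)$ for every $g\in G$ (and similarly $\varkappa_n(h)\to\varsigma^{-1}(h)$ for $h\in H$).

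\textbf{Identifying the reach-limit state.} Fix $\varphi \in \StateSpace(\A)$. For each $n$ the reach condition supplies $\psi_n \in \StateSpace(\B)$ with
\begin{equation*}
\Kantorovich{\Lip_n}\bigl(\varphi\circ\alpha^g\circ\pi_n,\ \psi_n\circ\beta^{\varsigma_n(g)}\circ\rho_n\bigr) \leq \tfrac{1}{n+1}
\end{equation*}
uniformly in $g\in G[n+1]$; by weak*-compactness of $\StateSpace(\B)$, extract a subsequence along which $\psi_n \to \psi$ weak*. Specialising to $g=e_G$, so $\varsigma_n(e_G)=e_H$, gives $\Kantorovich{\Lip_n}(\varphi\circ\pi_n,\psi_n\circ\rho_n)\leq 1/(n+1)$. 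For any $b\in\dom{\Lip_\B}$, lift $b$ through the quantum isometry $\rho_n$ to $d_n\in\sa{\D_n}$ with $\rho_n(d_n)=b$ and $\Lip_n(d_n)$ arbitrarily close to $\Lip_\B(b)$; applying the dual-propinquity coincidence construction to $\tau_n^{-1}$ gives $\pi_n(d_n)\to\pi^{-1}(b)$ in norm. Passing to the limit in $|\varphi(\pi_n(d_n))-\psi_n(b)|\leq\Lip_n(d_n)/(n+1)$ forces $\psi(b)=\varphi(\pi^{-1}(b))$, i.e.\ $\psi=\varphi\circ\pi^{-1}$.

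\textbf{Equivariance, and the key difficulty.} Fix $g\in G$ and $a\in\dom{\Lip_\A}$, and pick a lift $d_n\in\sa{\D_n}$ with $\pi_n(d_n)=a$, $\Lip_n(d_n)\to\Lip_\A(a)$ and $\rho_n(d_n)\to\pi(a)$ in norm. For $n$ large $g\in G[n+1]$; since $\varsigma_n(g)\to\varsigma(g)$, the local boundedness of $h\mapsto\dil{\beta^h}$ and the strong continuity of $\beta$ yield $\beta^{\varsigma_n(g)}(\rho_n(d_n))\to\beta^{\varsigma(g)}(\pi(a))$ in norm, and combined with $\psi_n\to\psi$ weak* and $\|\psi_n\|=1$ this gives $\psi_n(\beta^{\varsigma_n(g)}(\rho_n(d_n)))\to\psi(\beta^{\varsigma(g)}(\pi(a)))$. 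The reach inequality evaluated at $d_n$ then forces $\varphi(\alpha^g(a))=\psi(\beta^{\varsigma(g)}(\pi(a)))=\varphi(\pi^{-1}(\beta^{\varsigma(g)}(\pi(a))))$. Letting $\varphi$ vary over $\StateSpace(\A)$ and using density of $\dom{\Lip_\A}$ in $\A$, we conclude $\pi\circ\alpha^g=\beta^{\varsigma(g)}\circ\pi$ for every $g\in G$, so $(\pi,\varsigma)$ is the desired equivariant full quantum isometry. The main obstacle is orchestrating, in this last step, three simultaneous limits (weak* in $\StateSpace(\B)$, metric in $H$, and norm in $\B$); in particular, the identification $\psi=\varphi\circ\pi^{-1}$ is what welds together the separately produced $\pi$ and $\varsigma$ into an \emph{equivariant} pair, and it relies crucially on having the reach (not merely the extent) control the tunnel at the identity of $G$.
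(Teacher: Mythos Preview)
Your proof is correct and takes a genuinely different route from the paper's. Both begin identically---extracting $\pi$ from target-set convergence and $\varsigma$ from Theorem \ref{upsilon-metric-thm}---but diverge on the equivariance argument. The paper fixes $\psi\in\StateSpace(\B)$ and $h\in H$, uses the reach in the $\B\to\A$ direction to produce auxiliary states $\varphi_n\in\StateSpace(\A)$ which are never passed to a limit, and works entirely with three target-set sequences $a_n,b_n,c_n$ in $\B$; the local boundedness of $g\mapsto\dil{\alpha^g}$ is genuinely needed there, because one must bound $\Lip_\A(\alpha^{\varkappa_n(h)}(a))$ uniformly in $n$ so that $c_n$ sits in a target set of fixed parameter. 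You instead fix $\varphi\in\StateSpace(\A)$, use the reach in the $\A\to\B$ direction, extract a weak*-limit $\psi$ of the resulting $\psi_n$, and then identify $\psi=\varphi\circ\pi^{-1}$ via the $g=e_G$ case---this identification step is the crux of your argument and has no counterpart in the paper. Your route is arguably more economical: it uses a single target-set lift and, despite your citing it, does not actually require the local boundedness of $h\mapsto\dil{\beta^h}$, since each $\beta^{\varsigma_n(g)}$ is positive unital hence contractive in the C*-norm, and strong continuity alone already gives $\beta^{\varsigma_n(g)}(\rho_n(d_n))\to\beta^{\varsigma(g)}(\pi(a))$. The price you pay is an extra $\varphi$-dependent subsequence extraction, but since the conclusion $\varphi(\alpha^g(a))=\varphi(\pi^{-1}\beta^{\varsigma(g)}\pi(a))$ is subsequence-free, this is harmless when you then let $\varphi$ range over $\StateSpace(\A)$.
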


\begin{proof}
  Write $\mathds{A} = (\A,\Lip_\A,G,\delta_G,\alpha)$ and $\mathds{B} = (\B,\Lip_\B,H,\delta_H,\beta)$.

  Assume that there exists a full quantum isometry $\pi : (\A,\Lip_\A)\rightarrow(\B,\Lip_\B)$ and a proper monoid isometric isomorphism $\varsigma: G_1 \rightarrow G_2$ such that for all $g\in G_1$ we have $\pi\circ\alpha^g = \beta^{\varsigma(g)}\circ\pi$. As $\pi$ is a full quantum isometry, $(\A,\Lip_\A,\mathrm{id}_\A,\pi)$ is a tunnel (where $\mathrm{id}_\A$ is the identity of $\A$) of extent $0$. Let $\tau = (\A,\Lip_\A,\mathrm{id}_\A,\varsigma,\varsigma^{-1})$ --- which is, for all $\varepsilon > 0$, a $\varepsilon$-covariant tunnel from $\mathds{A}$ to $\mathds{B}$, and by definition, an element of $\mathcal{T}$. The extent of $\tau$ is of course $0$. We note that since $\pi$ is equivariant:
\begin{equation*}
\sup\left\{ \Kantorovich{\Lip_\A}(\varphi\circ\alpha^g, \varphi\circ\pi\circ\beta^{\varsigma(g)} ) : g \in G_1, \varphi \in \StateSpace(\A) \right\} = 0
\end{equation*}
and similarly exchanging $\A$ for $\B$, so $\tunnelreach{\tau}{\varepsilon} = 0$ for all $\varepsilon > 0$. Hence $\tunnelmagnitude{\tau}{\varepsilon} = 0 < \varepsilon$ for all $\varepsilon > 0$. Hence $\covpropinquity{\mathcal{T}}(\mathds{A},\mathds{B}) = 0$.

Conversely, assume that $\covpropinquity{\mathcal{T}}(\mathds{A},\mathds{B}) = 0$. For each $n\in\N$, there exists an $(n+2)$-covariant tunnel $\tau_n = (\D_n,\Lip_n,\pi_n,\rho_n,\varsigma_n,\varkappa_n)$ in $\mathcal{T}$ from $\mathds{A}$ to $\mathds{B}$ with $\tunnelmagnitude{\tau_n}{\frac{1}{n+2}} \leq \frac{1}{n+2}$. As the covariant propinquity dominates the propinquity (when $\covpropinquity{\mathcal{T}}\leq\frac{\sqrt{2}}{2}$) by Proposition (\ref{pseudo-prop}), we can use our work on the coincidence property of the propinquity from \cite[Theorem 4.16]{Latremoliere13b} and \cite[Theorem 5.13]{Latremoliere13}. Therefore, there exists a strictly increasing function $f : \N \rightarrow \N$ and a full quantum isometry $\pi: (\A,\Lip_\A)\rightarrow(\B,\Lip_\B)$ such that, for all $a\in\dom{\Lip_\A}$ and $l\geq\Lip_\A(a)$, the sequence $\left(\targetsettunnel{\tau_{f(n)}}{a}{l}\right)_{n\in\N}$ converges to $\{\pi(a)\}$ for $\Haus{\|\cdot\|_\B}$, and for all $b \in \dom{\Lip_\B}$ and $l\geq\Lip_\B(b)$, the sequence $\left(\targetsettunnel{\tau_{f(n)}^{-1}}{b}{l}\right)_{n\in\N}$ converges to $\{\pi^{-1}(b)\}$ for $\Haus{\|\cdot\|_\A}$. It remains to prove the equivariance property of $\pi$.

By Theorem (\ref{upsilon-metric-thm}), there exists a monoid isometric isomorphism $\varsigma : G \rightarrow H$ and a strictly increasing function $j : \N\rightarrow\N$ such that for all $g \in G$, the sequence $(\varsigma_{f(j(n))}(g))_{n\in\N}$ converges to $\varsigma(g)$, and for all $g\in H$, the sequence $(\varkappa_{f(j(n))}(g))_{n\in\N}$ converges to $\varsigma^{-1}(g)$. We write $\varkappa=\varsigma^{-1}$ and $k = f\circ j$.

Let $a\in \dom{\Lip_\A}$ and $h \in H$. Let $n\in\N$. Set $l = \Lip_\A(a)$. As $g\in G\mapsto \dil{\alpha^g}$ is locally bounded, there exists $\eta>0$ and $D \geq 0$ such that if $g\in G$ and $\delta_G(g,\varkappa(h)) < \eta$ then $\dil{\alpha^g} \leq D$. Since $\varkappa_{k(n)}(h)$ converges to $\varkappa(h)$ in $G$, there exists $N\in\N$ such that for all $n\geq N$, we have $\dil{\alpha^{\varkappa_{k(n)}(h)}} \leq D$.

Moreover, the sequence $(\varkappa_{k(n)}(h))_{n\in\N}$ is convergent, hence bounded, so there exists $N_1 \in \N$ such that for all $n\in\N$ we have $\varkappa_{k(n)}(h) \in G[N_1]$. Let $N_2 = \max\{ N, N_1 \}$.

For all $n\geq N_2$, we choose:
\begin{itemize}
\item $b_n \in \targetsettunnel{\tau_{k(n)}}{a}{l}$,
\item $c_n\in \targetsettunnel{\tau_{k(n)}}{\alpha^{\varkappa_{k(n)}(h)}(a)}{D l}$,
\item $a_n \in \targetsettunnel{\tau_{k(n)}}{\alpha^{\varkappa(h)}(a)}{D l}$.
\end{itemize}

We first note that (if $n\geq N_2$), using \cite[Proposition (4.4)]{Latremoliere13b}, \cite[Proposition 2.12]{Latremoliere14}:
\begin{align*}
  \norm{a_n - c_n}{\B} \leq \norm{\alpha^{\varkappa(h)}(a) -  \alpha^{\varkappa_{k(n)}(h)}(a)}{\A} + 2 D l \tunnelextent{\tau_{k(n)}}
\end{align*}
and since $\alpha$ is strongly continuous:
\begin{multline*}
  \limsup_{n\rightarrow\infty} \norm{a_n - c_n}{\B} \\
  \leq \limsup_{n\rightarrow\infty} \norm{\alpha^{\varkappa(h)}(a) -  \alpha^{\varkappa_{k(n)}(h)}(a)}{\A} + 2 l D \limsup_{n\rightarrow\infty} \tunnelextent{\tau_{k(n)}} = 0 \text{.}
\end{multline*}

Let $\psi \in \StateSpace(\B)$ and $n\geq N_2$. By definition of the magnitude of a covariant tunnel, there exists $\varphi_n \in \StateSpace(\A)$ such that:
\begin{equation*} 
  \Kantorovich{\Lip_{k(n)}}(\psi\circ\beta^g\circ\rho_{k(n)},\varphi_n\circ\alpha^{\varkappa_{k(n)}(g)}\circ\pi_{k(n)}) \leq \tunnelreach{\tau_{k(n)}}{\frac{1}{k(n)+2}}\leq \frac{1}{k(n)+2} \text{.}
\end{equation*}
Consequently, since $c_n\in\targetsettunnel{\tau_{k(n)}}{\alpha^{\varkappa_{k(n)}(h)}}{D l}$, we have:
\begin{equation*}
  \left|\psi(c_n) - \varphi_n(\alpha^{\varkappa_{k(n)}(h)}(a))\right| \leq D l \Kantorovich{\Lip_{k(n)}}(\psi,\varphi_n) < \frac{1}{k(n)+2}\text{.}
\end{equation*}
Similarly, since $b_n \in \targetsettunnel{\tau_{k(n)}}{a}{l}$, we have:
\begin{equation*}
  \left|\varphi_n(\alpha^{\varkappa_{k(n)}(h)}(a)) - \psi(\beta^h(b_n))\right| \leq \frac{l}{k(n)+2}\text{.}
\end{equation*}

We then compute for all $n\geq N_2$:
  \begin{multline*}
    \left|\psi(a_n - \beta^h(b_n))\right| \\
    \begin{split}
      &\leq \norm{a_n - c_n}{\B} + \left|\psi(c_n) - \varphi_n(\alpha^{\varkappa_{k(n)}(h)}(a))\right| + \left|\varphi_n(\alpha^{\varkappa_{k(n)}(h)}(a)) - \psi(\beta^h(b_n))\right| \\
      &\leq \norm{a_n - c_n}{\B} + \frac{(1 + D) l}{k(n)+2}  \\
      &\xrightarrow{n\rightarrow\infty} 0 \text{.}
    \end{split}
  \end{multline*}

Therefore:
\begin{equation*}
  \left| \psi(\pi(\alpha^{\varkappa(h)}(a)) - \beta^h(\pi(a))) \right|= \lim_{n\rightarrow\infty} \left|\psi(a_n - \beta^{h}(b_n))\right| = 0\text{.}
\end{equation*}

As $\psi \in \StateSpace(\B)$ is arbitrary, $\norm{\pi(\alpha^{\varkappa(h)}(a)) - \beta^h(\pi(a))}{\B} = 0$, i.e. $\pi(\alpha^{\varkappa(h)}(a)) = \beta^h(\pi(a))$. By linearity and continuity of $\pi$ and $\alpha^{\varkappa(h)}$, $\beta^h$, we conclude $\pi\circ\alpha^{\varkappa(h)} = \beta^{h}\circ\pi$ since $\dom{\Lip_\A}$ is a total subset of $\A$. As $h\in H$ was arbitrary, we conclude:
\begin{equation*}
  \forall h \in H \quad \pi\circ\alpha^{\varkappa(h)} = \beta^h\circ\pi \text{.}
\end{equation*}

Therefore:
\begin{equation*}
  \forall g \in G\quad \pi\circ\alpha^g = \pi\circ\alpha^{\varkappa(\varsigma(g))} = \beta^{\varsigma(g)} \text{.}
\end{equation*}
This concludes our proof.
\end{proof}

We thus have proven:
\begin{corollary}
  Assume Hypothesis (\ref{hyp}). The covariant propinquity $\covpropinquity{\mathcal{T}}$ is a metric up to equivariant full quantum isometry on the class $\mathcal{C}$ of Lipschitz dynamical $F$-systems.
\end{corollary}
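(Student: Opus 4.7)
The plan is to combine the two preceding results. By Proposition~(\ref{pseudo-prop}), the quantity $\covpropinquity{\mathcal{T}}$ is already a pseudo-metric bounded above by $\frac{\sqrt{2}}{2}$ on $\mathcal{C}$: symmetry follows from Condition~(4) of Definition~(\ref{appropriate-def}) (closure of $\mathcal{T}$ under tunnel reversal) together with the observation that the extent and the reach of a covariant tunnel are visibly symmetric in the roles of the domain and the codomain; the triangle inequality is obtained via the almost-composition of covariant tunnels in Theorem~(\ref{triangle-thm}), using Condition~(5) of Definition~(\ref{appropriate-def}) to keep the composed tunnel inside the class $\mathcal{T}$; and the bound by $\frac{\sqrt{2}}{2}$ is built into Definition~(\ref{covariant-propinquity-def}).

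All that remains is to upgrade the pseudo-metric to a genuine metric on the quotient by equivariant full quantum isometry, which is exactly the content of the preceding theorem. Thus, if $\mathds{A},\mathds{B} \in \mathcal{C}$ are equivariantly fully quantum isometric, via $(\pi,\varsigma)$, then Condition~(3) of Definition~(\ref{appropriate-def}) furnishes the tunnel $\tau = (\A,\Lip_\A,\mathrm{id}_\A,\pi,\varsigma,\varsigma^{-1})$ in $\mathcal{T}$, whose extent is zero (since $\mathrm{id}_\A$ and $\pi$ are full quantum isometries) and whose $\varepsilon$-reach vanishes for every $\varepsilon>0$ thanks to the intertwining $\pi\circ\alpha^g=\beta^{\varsigma(g)}\circ\pi$; hence $\covpropinquity{\mathcal{T}}(\mathds{A},\mathds{B})=0$. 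Conversely, if $\covpropinquity{\mathcal{T}}(\mathds{A},\mathds{B})=0$, then the preceding theorem produces an equivariant full quantum isometry from $\mathds{A}$ to $\mathds{B}$.

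I do not expect any genuine obstacle here; the corollary is a direct synthesis. The only point that deserves a line of care is checking that the two canonical ``identity'' quintuples promised by Condition~(3) of Definition~(\ref{appropriate-def}) have vanishing magnitude in Definition~(\ref{magnitude-def}), and this reduces to a one-line computation using that $\pi$ (resp. $\varsigma$) intertwines the actions (resp. the multiplications) exactly.
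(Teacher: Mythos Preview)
Your proposal is correct and takes essentially the same approach as the paper: the corollary is stated immediately after the coincidence theorem with the words ``We thus have proven,'' and no separate proof is given, since it is a direct synthesis of Proposition~(\ref{pseudo-prop}) (pseudo-metric properties) and the preceding theorem (coincidence property). Your write-up is, if anything, more explicit than the paper's treatment.
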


\section{Convergence of Quantum Tori and Fuzzy Tori and their dual actions}

In this section, we prove that the dual actions on quantum tori form a continuous family for the $\covpropinquity{}$, and the dual actions on the fuzzy tori converge to the dual actions on quantum tori for $\covpropinquity{}$. We base this section on \cite{Latremoliere13c}.

Let $\Nbar = \N_\ast \cup \{\infty\}$, with $\N_\ast = \N\setminus\{0\}$, endowed with its usual topology:
\begin{equation*}
  \left\{ U \subseteq\Nbar : U\subseteq\N_\ast \text{ or }\Nbar\setminus U \text{ is finite}\right\}\text{.}
\end{equation*}

Fix $d \in \N\setminus\{0,1\}$ in this entire section. For any $k = (k_1,\ldots,k_d)\in\Nbar^d$, we write $\Z_k^d$ for $\bigslant{\Z^d}{\prod_{j=1}^d k_j \Z}$, with the convention $\infty\Z = \{ 0 \}$, and $\U_k^d$ for the Pontryagin dual of $\Z_k^d$ seen as a subgroup of the $d$-torus $\T^d = \{ (z_1,\ldots,z_d) \in \C^d : |z_1|=\ldots=|z_d| = 1\}$.

For this entire section, we fix a continuous length function $\ell$ on $\T^d$ and denote the metric it induces on $\T^d$ by $D_\ell$. We will in fact need to work with the metric $D$ induced by usual Hermitian $\norm{\cdot}{\C^d}$ as well. 

As a first observation:
\begin{equation*}
  \lim_{k\rightarrow \infty^d} \Haus{D}(\U_k^d, \T^d) = 0 \text{.}
\end{equation*}
By \cite[Proposition 3.4]{Latremoliere17c}, we have:
\begin{equation}\label{torus-cv-norm-eq}
    \lim_{k\rightarrow \infty^d} \Upsilon((\U_k^d,D),(\T^d,D)) = 0 \text{.}
\end{equation}
Moreover, following the proof of \cite[Proposition 3.4]{Latremoliere17c}, for $k\in\Nbar^d$, if we set $\varsigma_k(g) = g$ for all $g\in U_k^d$, and $\varkappa_k(h)$ is one of the closest element of $\U_k^d$ to $h \in \T^d$ for $\norm{\cdot}{\C^d}$ (it does not matter which such element we choose). Then:
\begin{equation*}
  (\varsigma_k,\varkappa_k) \in \UIso{\Upsilon((\U_k^d,D),(\T^d,D))}{(\U_k^d,D)}{(\T^d,D)}{2} \text{.}
\end{equation*}

Moreover, convergence in the sense of $\Haus{D}$ is the same as convergence for the Vietoris topology, and  then this is equivalent to convergence for $\Haus{D_\ell}$. Again by \cite[Proposition 3.4]{Latremoliere17c},  we conclude:
\begin{equation}\label{torus-cv-ell-eq}
    \lim_{k\rightarrow \infty^d} \Upsilon((\U_k^d,D_\ell),(\T^d,D_\ell)) = 0 \text{.}
\end{equation}

Let $A_d(\R)$ be the space of anti-symmetric $d\times d$-matrices over $\R$ metrized by the usual operator norm, and then set, for $(k_1,\ldots,k_d)\in\Nbar^d$:
\begin{equation*}
  \Xi_k^d = \left\{ (\theta_{n m})_{1\leq n,m \leq d} \in A_d(\R) \middle\vert \forall n,m \in \{1,\ldots,d\} \quad \mathrm{gcd}(k_n,k_m) \theta_{nm} \in \Z  \right\}
\end{equation*}
where $\mathrm{gcd}(n,m)$ is the greatest common divisor of $n$ and $m$ with the convention that $\mathrm{gcd}(\infty,n)=\mathrm{gcd}(n,\infty) = n$ for all $n\in\Nbar$.

Let $k \in \Nbar^d$. Any multiplier of $\Z_k^d$ is given by:
\begin{equation}\label{multiplier-eq}
  \sigma_{k,\theta} : (z,w) \in \Z_k^d \times \Z_k^d \mapsto \exp\left( 2i\pi \inner{\theta s(z)}{s(w)}{\C^d} \right)
\end{equation}
where $\theta\in\Xi_k^d$, the product $\inner{\cdot}{\cdot}{\C^d}$ is the usual inner-product on $\C^d$ and $s : \Z_k^d \rightarrow \Z^d$ is any section of the canonical surjection $\Z^d \rightarrow \Z_k^d$ --- by definition of $\Xi_k^d$, Expression (\ref{multiplier-eq}) is independent of the choice of $s$.

We denote the twisted convolution C*-algebra of $\Z^d_k$ by $\sigma_{k,\theta}$ for $\theta\in \Xi_k^d$ by $C^\ast(\Z_k^d,\theta)$. In the literature, $C^\ast(\Z^d,\theta)$ are known as quantum tori.

For all $k\in\Nbar^d$ and $\theta \in \U_k^d$, there is a natural strongly continuous ergodic action $\alpha_{k,\theta}$ of $\U_k^d$ on $C^\ast(\Z_k^d,\theta)$, called the dual action, which extends the action on the dense subspace $\ell^1(\Z_k^d)$ in $C^\ast(\Z_k^d,\theta)$ where we set, for $\xi \in \ell^1(\Z_k^d)$ and $\lambda=(\lambda_1,\ldots,\lambda_d)$:
\begin{equation*}
  \alpha_{k,\theta}\xi : z = (z_1,\ldots,z_d)\in\Z_k^d \mapsto \lambda^z \xi(z)
\end{equation*}
where we define $\lambda^k = (\lambda_1^{s(k_1)},\ldots,\lambda_d^{s(k_d)})$ with again $s : \Z_k^d\rightarrow\Z^d$ a section of the canonical surjection $\Z^d \rightarrow \Z_k^d$, whose choice does not influence the value of $\lambda^k$.

This action is instrumental in defining the noncommutative geometry of $C^\ast(\Z_k^d,\theta)$ via transport of structure. For our purpose, Rieffel proved in \cite{Rieffel98a} that for all $k\in\Nbar^d$, $\theta\in \Xi_k^d$, and $a\in C^\ast(\Z^d,\theta)$, if we set:
\begin{equation*}
  \Lip_{k,\theta}(a) = \sup\left\{ \frac{\norm{a - \alpha_{k,\theta}^\lambda(a)}{C^\ast(\Z_k^d,\theta)}}{\ell(\lambda)} \middle\vert \lambda \in \U_k^d \setminus \{1\} \right\},
\end{equation*}
then $\left( C^\ast(\Z_k^d,\theta), \Lip_{k,\theta}\right)$ is a Leibniz {\qcms}. In particular, the action $\alpha_{k,\theta}$ is an action by full quantum isometries.

We proved in \cite{Latremoliere13c} that:
\begin{equation*}
  \lim_{\substack{(k,\eta)\rightarrow(\infty_d,\theta)\\(q,\eta)\in\Omega}}\dpropinquity{}\left(\left(C^\ast(\Z_k^d,\eta),\Lip_{k,\eta}\right), \left(C^\ast(\Z^d,\theta),\Lip_{\infty^d,\theta}\right)\right) = 0
\end{equation*}
where $\Omega = \{ (k,\eta) \in \Nbar^d\times A(\R^d) : \eta\in\Xi_k^d \}$ is topolgized as a subset of the product $\Nbar^d\times A(\R^d)$. In this section, we work with the propinquity --- and the covariant propinquity --- for the class of all Leibniz tunnels over Leibniz {\qcms s}.

The computation of an upper bound on the propinquity between quantum tori and fuzzy tori in \cite{Latremoliere13c} relies on a particular choice of *-representations of these C*-algebras. A noteworthy property of this choice is that all the dual actions described above are implemented using a particular family of unitaries which does not depend on the choice of multiplier.

Fix $k\in\Nbar^d$ and $\theta\in \Xi_k^d$. We denote by $\ell^2(\Z_k^d)$ the Hilbert space of square summable, $\C$-valued families indexed by $\Z_k^d$. For all $n\in\Z_k^d$, we write:
\begin{equation*}
  e_k^n : m \in \Z_k^d \mapsto \begin{cases}
    1 \text{ if $n = m$,}\\
    0 \text{ otherwise.}
  \end{cases}
\end{equation*}
and we note that $(e_k^n)_{n\in\Z_k^d}$ is a Hilbert basis of $\ell^2(\Z_k^d)$. We also denote the C*-algebra of all bounded linear operators on $\ell^2(\Z_k^d)$ by $\B_k$. To ease our notations later on, we set $\B_{\infty,\ldots,\infty} = \B$ and $e^{\infty,\ldots,\infty}_n = e_n$ for all $n\in\Z^d$.

For each $n\in\Z^d_k$ and $\theta \in \Xi_k^d$, we define $U_{k,\theta}^n\in\B_k$ by linearity and continuity from the relations:
\begin{equation*}
  U_{k,\theta}^n e_k^m = \sigma_{k,\theta}(n,m) e_k^{m-n}\text{.}
\end{equation*}
Since $U_{k,\theta}^n  U_{k,\theta}^m = \sigma_{k,\theta} (n,m) U_{k,\theta}^{n+m}$ for all $n,m \in \Z_k^d$, by universality, there exists a unique *-representation $\rho_{k,\theta}$ of $C^\ast(\Z_k^d,\theta)$ on $\ell^2(\Z_k^d)$ such that $\rho_{k,\theta}(f) = \sum_{n\in\Z^d_k} f(n) U_{k,\theta}^n$ for all $f$ in the dense subspace $\ell^1(\Z_k^d)$ of $C^\ast(\Z_k^d,\theta)$.

On $\ell^2(\Z^d)$, for any $\omega\in\T^d$,$\lambda\in\U_k^d$, we define the unitary $u_{k,\omega,\lambda}\in\B_k$ by setting $u_{k,\omega,\lambda}(e_k^n) = \omega \lambda^{-n} e_n$ for all $n\in\Z_k^d$ where we multiply the $d$-tuples of $\T^d$ coordinatewise. We now compute, for all $\lambda\in\U_k^d$, $\omega\in\T^d$:
\begin{align*}
  u_{k,\omega,\lambda} U_{k,\theta}^n u_{k,\omega,\lambda}^\ast(e_m) 
  &= u_{k,\omega,\lambda} U_{k,\theta}^n \overline{\omega}\lambda^{m} e_k^m \\
  &= \overline{\omega} \lambda^{m} u_{k,\omega,\lambda} \sigma_{k,\theta}(n,m)e_k^{m-n} \\
  &= \overline{\omega} \lambda^m \sigma_{k,\theta}(n,m) \omega \lambda^{n-m} e_k^{m-n}\\
  &= \lambda^{n} \sigma_{k,\theta}(n,m) e_k^{m-n} \\
  &= \lambda^{n} U_{k,\sigma}^n e_k^m \text{.}
\end{align*}

Therefore, for all $\lambda\in\U_k^d$ and $\omega\in\T^d$, we have $\rho_{k,\sigma}(\alpha_{k,\sigma}^\lambda(a)) = \mathrm{Ad}u_{k,\omega,\lambda} (\rho_{k,\sigma}(a))$ for all $a\in C^\ast(\Z^d_k,\theta)$ --- as can be directly checked for $a\in\ell^1(\Z_k^d)$ and then holds by continuity over all of $C^\ast(\Z_k^d,\theta)$.

In order to compute an upper bound on the propinquity between any two of our twisted C*-algebras of quotients of products of cyclic groups, it proved useful in \cite{Latremoliere13c} to represent all these C*-algebras on the same Hilbert space $\ell^2(\Z^d)$.

Define:
\begin{equation*}
  I_k = \prod_{j=1}^d J_{k_j}
\end{equation*}
where for any $n\in\Nbar$:
\begin{equation*}
  J_n = \begin{cases}
    \left\{ \left\lfloor \frac{1-n}{2} \right\rfloor, \left\lfloor \frac{1-n}{2} \right\rfloor + 1, \ldots, \left\lfloor \frac{n-1}{2} \right\rfloor \right\} \text{ if $n \in \N_\ast$,}\\
    \Z \text{ otherwise.}
\end{cases}
\end{equation*}

Let $q : \Z^d\twoheadrightarrow\Z_k^d$ be the canonical surjection over, which we observe restricts to a bijection over $I_k$. Let $\mathcal{E}_m = \mathrm{span}\left\{e_{n + m} : n \in I_k \right\}$ for all $m\in k\Z^d$. For $m\in k\Z^d$, we define $y_m$ by linearity and continuity from:
\begin{equation*}
  y_m : e^n \mapsto\begin{cases}
    e_k^{q(n - m)} \text{ if $n \in m + I_k$,}\\
    0 \text{ otherwise.}
  \end{cases}
\end{equation*} 

We then extend $\rho_{k,\theta}$ to a representation of $C^\ast(\Z_k^d,\theta)$ on $\B$ by setting:
\begin{equation*}
  \pi_{k,\theta} (a)  = \sum_{m \in k\Z^d} y_m^\ast \rho_{k,\theta}(a) y_m\text{.}
\end{equation*}
Let $u_\lambda$ be our simplified notation for $u_{\infty^d,1,\lambda}$. We now compute for all $m\in k\Z^d$ and $n\in m + I_k$:
\begin{equation*}
   y_m u_\lambda(e^n) = \lambda^{-n}e_k^{q(n - m)} = \lambda^{m} \lambda^{n - m} y_m(e^n) = u_{\lambda^m,\lambda} y_m(e^n) \text{.}
\end{equation*}
Of course, $y_m u_\lambda(e^n) = 0 = u_{\lambda^m,\lambda} y_m(e^n)$ for all $n\not\in m+I_k$.

Hence for all $\lambda\in\U_k^d$, we compute:
\begin{align*}
  \mathrm{Ad}u_\lambda\circ\pi_{k,\theta}(\cdot) &= \sum u_\lambda y_m^\ast \rho_{k,\theta}(\cdot) y_m u_\lambda^\ast \\
  &= \sum y_m^\ast u_{k,\lambda^m,\lambda} \rho_{k,\theta}(\cdot) u_{k,\lambda^m,\lambda}^\ast y_m \\
  &= \sum y_n^\ast \rho\circ\alpha^\lambda_{k,\theta}(\cdot) y_m \text{.}
\end{align*}
We thus conclude that $\pi_{k,\theta}\circ\alpha^\lambda = \mathrm{Ad}u_\lambda\circ\pi_{k,\theta}$ for all $\lambda\in\U_k^d$.

In \cite{Latremoliere13c}, we computed an upper bound on the propinquity between quantum tori and fuzzy tori (or other quantum tori) from the construction of a bridge, in the sense of \cite{Latremoliere13}, which we now recall:
\begin{definition}[\cite{Latremoliere13}]\label{bridge-def}
  A \emph{bridge} $(\D,x,\pi_\A,\pi_\B)$ from a unital C*-algebra $\A$ to a unital C*-algebra $\B$ is given by a unital C*-algebra $\D$, two unital *-monomorphisms $\pi_\A : \A\rightarrow\D$ and $\pi_\B : \B\rightarrow\D$, and an element $x\in \D$ with the property that if $y=\unit_\D - x$ then:
  \begin{equation*}
    \StateSpace_1(\D|x) = \left\{ \varphi \in \StateSpace(\D) \middle\vert \varphi(y^\ast y) = \varphi(y y^\ast) = 0 \right\} \not= \emptyset \text{.}
  \end{equation*}
\end{definition}

A bridge provides a mean to measure how far two {\qcms s} are by using the notion of length.
\begin{definition}[\cite{Latremoliere13}]
  Let $(\A_1,\Lip_1)$ and $(\A_2,\Lip_2)$ be two {\qcms s}. The \emph{length} $\bridgelength{\gamma}{\Lip_1,\Lip_2}$ of a bridge $\gamma = (\D,x,\pi_1,\pi_2)$ from $\A_1$ to $\A_2$ is the maximum of its \emph{height}, defined by:
  \begin{equation*}
    \max_{j\in\{1,2\}}\left\{ \Haus{\Kantorovich{\Lip_j}}\left(\StateSpace(\A_j),\{\varphi\circ\pi_j : \varphi \in \StateSpace_1(\D|x)\right) \right\}
  \end{equation*}
  and its \emph{reach}, defined by:
  \begin{equation*}
    \max_{\{j,k\}=\{1,2\}} \sup_{\substack{a_j \in \sa{\A_j}\\ \Lip_j(a_j)\leq 1}} \inf_{\substack{a_k \in \sa{\A_k} \\ \Lip_k(a_k) \leq 1}} \bridgenorm{\gamma}{a_1,a_2}
  \end{equation*}
  where $\bridgenorm{\gamma}{a,b} = \norm{\pi_1(a) x - x \pi_2(b)}{\D}$ for all $a\in\A_1$ and $b\in \A_2$.
\end{definition}
We refer to \cite{Latremoliere13} for the observation that the length of a bridge is always finite and that bridges always exist, as well as their use in computing estimates on the propinquity. We will now discuss how to generalize this notion to prove convergence for the covariant propinquity.

Let $\varepsilon > 0$ and recall from above our choice of $\B = \ell^2(\Z^d)$ and the representations $\pi_{k,\theta}$ for all $(k,\theta)\in\Omega$. Fix $\theta \in A(\R^d)$. 

By \cite{Latremoliere13c}, there exists an open neighborhood $V$ of $(\infty_d,\theta)$ in $\Omega$ and a finite rank operator $x\in \B$, diagonal in the basis $(e^n)_{n\in\N}$, such that for all $(k,\eta) \in V$, the quadruple $\gamma_{k,\eta} = (\B,x_{k,\eta},\pi_{\infty^d,\theta},\pi_{k,\eta})$ is a bridge $C^\ast(\Z^d,\theta)$ to $C^\ast(\Z_k^d,\eta)$ where:
\begin{itemize}
  \item $\bridgelength{\gamma_{k,\eta}}{\Lip_\A,\Lip_\B} \leq \frac{\varepsilon}{2}$,
  \item if $a\in C^\ast(\Z^d_k,\theta)$ with $\Lip_\theta(a) \leq 1$ then $\norm{[x_{k,\eta},\pi_{\infty^d,\theta}(a)]}{\B} < \varepsilon$.
\end{itemize}
Note in particular that for all $\lambda\in\T^d$, the operators $x$ and $u_\lambda$ commute. It will be important that $x$ is chosen uniformly over $V$, and this can be seen in \cite[Claim 5.2.7]{Latremoliere13c} of the proof of \cite[Theorem 5.2.4]{Latremoliere13c}.

Since $x$ is finite rank, diagonal in the basis $(e^n)_{n\in\Z^d}$, there exists $N\in\N$ such that if $n=(n_1,\ldots,n_d) \in \Z^d$ and $|n_j| > N$ for some $j\in\{1,\ldots,d\}$ then $x(e^n) = 0$.

The function which associates to a {\qcms} the diameter of its state space for the {\MongeKant} is continuous with respect to the propinquity, and the class $\left\{ \left(C^\ast(\Z_k^d),\theta),\Lip_{k,\theta}\right) : (k,\theta)\in \Omega \right\}$ is compact for $\dpropinquity{}$, so there exists an upper bound $R > 0$ to all the diameters of the state space of all the spaces in this class.

Now, we can choose $M \in \N$ with $M \geq 2 N$ such that if $n\geq M$ then $\sqrt{d} \left|\sin\left(\frac{N \pi}{n}\right)\right| < \frac{\varepsilon}{4 R}$. Then, for all $k = (k_1,\ldots,k_d) \in \Nbar^d$ with $\min\{k_j:j\in\{1,\ldots,d\}\} \geq M$, we compute for all $m=(m_1,\ldots,m_d)\in\N^d$ with $\max_{j\in\{1,\ldots,d\}} |m_j| \leq N$: 
\begin{align*}
  \norm{\lambda^m - \varkappa_k(\lambda)^m}{\C^d} 
  &= \norm{1-\left(\lambda^{-1}\varkappa_k(\lambda)\right)^m}{\C^d} \\
  &\leq \sqrt{d} \max\left\{ \left|1 - \exp\left(\frac{2i N \pi}{k_j}\right)\right| : j \in \{1,\ldots,d\} \right\}\\
  &\leq \sqrt{d} \max\left\{ 2\left| \sin\left( \frac{N \pi}{k_j} \right)  \right| : j \in \{1,\ldots,d\} \right\}\\
  &\leq \frac{\varepsilon}{2 R} \text{.}
\end{align*}

Moreover, let $\mu \in \StateSpace(C^\ast(\Z^d,\theta))$. We then note that for all $a\in\sa{C^\ast(\Z^d,\theta)}$, we have:
\begin{equation*}
\forall\varphi\in\StateSpace(C^\ast(\Z^d,\theta)) \quad |\varphi(a - \mu(a))| \leq |\varphi(a)-\mu(a)| \leq R \Lip_{\infty^d,\theta}(a)
\end{equation*}
so:
\begin{equation*}
  \norm{a-\mu(a)}{C^\ast(\Z^d,\theta)} \leq R \Lip_{\infty^d,\theta}(a) \text{.}
\end{equation*}

Let now $V_1 = V \cap \left( \left\{ (k_1,\ldots,k_d) \in \Nbar^d : \min_{j\in\{1,\ldots,d\}} |k_j| \geq M \right\} \times A(\R^d) \right)$. Note that $V_1$ is a neighborhood of $(\infty^d, \theta)$ in $\Omega$.

In the rest of this section, fix $(k,\eta) \in V_1$. We write $\pi_\eta$ for $\pi_{k,\eta}$, and $\pi_\theta$ for $\pi_{\infty^d,\theta}$. We also simply write $\alpha_\theta$ and $\alpha_\eta$ for the actions $\alpha_{\infty^d,\theta}$ and $\alpha_{k,\eta}$ respectively. We write $\gamma$ for $\gamma_{k,\eta}$.

Now, if $a\in \sa{C^\ast(\Z^d,\theta)}$ and $b \in \sa{C^\ast(\Z_k^d,\eta)}$, and if $\lambda \in \T^d$ and $z = \varkappa_k(\lambda) \in \U_k^d$, then:
\begin{align*}
  \bridgenorm{}{\alpha_\theta^\lambda(a),\alpha_\eta^z(b)}
  &= \norm{\pi_\theta(\alpha_\theta(a)) x - x\pi_\eta(\alpha_\eta^z(b))}{\B}\\
  &= \norm{u_\lambda \pi_\theta(a) u_\lambda^\ast x - x u_z \pi_\eta(b) u_z^\ast}{\B}\\
  &= \norm{u_\lambda \pi_\theta(a) x u_\lambda^\ast  - u_z x \pi_\eta(b) u_z^\ast}{\B}\\  
  &\leq \norm{u_\lambda \pi_\theta(a) x u_\lambda^\ast  - u_z \pi_\theta(a) x u_z^\ast}{\B} + \norm{u_z \pi_\theta(a) x u_z^\ast  - u_z x \pi_\eta(b) u_z^\ast}{\B}\\
  &= \norm{u_\lambda \pi_\theta(a-\mu(a)\unit) x u_\lambda^\ast  - u_\lambda \pi_\sigma(a-\mu(a)\unit) x u_z^\ast}{\B} \\
  &\quad + \norm{u_\lambda \pi_\theta(a-\mu(a)\unit) x u_z^\ast  - u_z \pi_\sigma(a-\mu(a)\unit) x u_z^\ast}{\B} \\
  &\quad + \norm{\pi_\theta(a) x  - x \pi_\eta(b)}{\B}\\
  &\leq  \norm{\pi_\theta(a-\mu(a)\unit) x \left(u_\lambda^\ast  - u_z\right)}{\B} \\
  &\quad +  \norm{\left(u_\lambda^\ast  - u_z\right) \pi_\theta(a-\mu(a)\unit) x}{\B} + \bridgenorm{\gamma}{a,b} \\
  &\leq 2\norm{(u_z-u_\lambda)x}{\B} \norm{a-\mu(a)\unit}{\A} + \bridgenorm{\gamma}{a,b}\\
  &\leq 2 R \max_{\substack{m=(m_1,\ldots,m_d)\\m_1\leq N,\ldots,m_d\leq N}}\norm{z^m - \lambda^m}{\C^d}  + \bridgenorm{\gamma}{a,b} \\
  &\leq  \varepsilon \text{.}
\end{align*}

We now investigate how our computation can be used to prove a convergence result for the covariant propinquity. We abstract some of the observations we have made above. To keep our notations reasonable for this particular example, we work within the context of actions from compact groups.

\begin{definition}\label{covariant-bridge-def}
    Let $\mathds{A}_1 = (\A_1,\Lip_1,G_1,\delta_1,\alpha_1)$ and $\mathds{A}_2 = (\A_2,\Lip_2,G_2,\delta_2,\alpha_2)$ be two Lipschitz dynamical systems with $\alpha_1$, $\alpha_2$ actions by unital *-endomorphisms. A \emph{covariant bridge} $\gamma  = (\D,x,\pi_\A,\pi_\B,\varsigma_1,\varsigma_2)$ is given by:
  \begin{enumerate}
    \item a bridge $(\D,x,\pi_\A,\pi_\B)$,
    \item a map $\varsigma_1 : G_1 \rightarrow G_2$ and a map $\varsigma_2  : G_2\rightarrow G_1$, both mapping the identity element to the identity element.
  \end{enumerate}
\end{definition}

\begin{definition}
  Let $\mathds{A}_1 = (\A_1,\Lip_1,G_1,\delta_1,\alpha_1)$ and $\mathds{A}_2 = (\A_2,\Lip_2,G_2,\delta_2,\alpha_2)$ be two Lipschitz dynamical systems with $\alpha_1$,$\alpha_2$ actions by unital *-endomorphisms. Let $\gamma = (\D,x,\pi_1,\pi_2,\varsigma_1,\varsigma_2)$ be a covariant bridge from $\mathds{A}_1$ to $\mathds{A}_2$. The \emph{reach} $\bridgereach{\gamma}{\Lip_\A,\Lip_\B}$ of $\gamma$ is:
  \begin{equation*}
    \max_{ \{j,k\}=\{1,2\} } \; \sup_{\substack{ a\in\dom{\Lip_j} \\ \Lip_j(a) \leq 1}} \; \inf_{\substack{b\in\dom{\Lip_k} \\ \Lip_k(b)\leq 1}} \; \sup_{g\in G_j}  \bridgenorm{\gamma}{\alpha_j^g(a),\alpha_k^{\varsigma_j(g)}(b)} \text{.} 
  \end{equation*}

  The \emph{length} $\bridgelength{\gamma}{\Lip_1,\Lip_2}$ of $\gamma$ is the maximum of its reach and the height of the bridge $(\D,x,\pi_1,\pi_2)$.
\end{definition}

\begin{proposition}\label{cobridge-prop}
  Let $F$ be a permissible function. Let $\mathds{A} = (\A,\Lip_\A,G,\delta_G,\alpha)$ and $\mathds{B} = (\B,\Lip_\B,H,\delta_H,\beta)$ be two Lipschitz dynamical $F$-systems, with $\alpha$ and $\beta$ being actions via unital *-endomorphisms. Let $\gamma = (\D,x,\pi_\A,\pi_\B,\varsigma,\varkappa)$ be a covariant bridge from $\mathds{A}$ to $\mathds{B}$.
  If $\varepsilon > 0$ is chosen so that:
  \begin{itemize}
  \item $\varepsilon \geq \bridgelength{\gamma}{\Lip_\A,\Lip_\B}$,
  \item $(\varsigma,\varkappa) \in \UIso{\varepsilon}{(G,\delta_G)}{(H,\delta_H)}{\frac{1}{\varepsilon}}$,
  \end{itemize}
  and if for all $a\in\dom{\Lip_\A}$ and $b\in\dom{\Lip_\B}$, we set: 
  \begin{multline*}
    \Lip(a,b) = \sup\left\{ \Lip_\A(a),\Lip_\B(b), \frac{1}{\varepsilon}\bridgenorm{\gamma}{\alpha^g(a),\beta^{\varsigma(g)}(b)}, \right. \\
      \left. \frac{1}{\varepsilon}\bridgenorm{\gamma}{\alpha^{\varkappa(h)}(a),\beta^h(b)} : g \in G, h \in H  \right\}
  \end{multline*}
  then $\tau = (\A\oplus\B,\Lip,\rho_\A,\rho_\B,\varsigma,\varkappa)$ is a $\frac{1}{4\varepsilon}$-covariant $F$-tunnel such that $\tunnelmagnitude{\tau}{\frac{1}{4\varepsilon}} \leq 4\varepsilon$.

  In particular:
  \begin{equation*}
    \covpropinquity{F}(\mathds{A},\mathds{B}) \leq 4 \varepsilon \text{.}
  \end{equation*}
\end{proposition}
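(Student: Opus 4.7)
The plan is to adapt the bridge-to-tunnel construction of \cite{Latremoliere13} to the covariant setting, where the orbit-compatibility terms built into $\Lip$ play the role of producing reach control for the resulting covariant tunnel. Three ingredients must be verified: that $(\A\oplus\B,\Lip)$ is a {\Qqcms{F}}, that $\rho_\A$ and $\rho_\B$ are quantum isometries, and that the extent and reach of $\tau$ are controlled by $\varepsilon$.

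For the first ingredient, I would note that $\Lip$ is a supremum of a family of seminorms, each finite because $\alpha^g$ and $\beta^{\varsigma(g)}$ are contractions so each bridge term is bounded by $(\norm{a}{\A}+\norm{b}{\B})\norm{x}{\D}/\varepsilon$. The seminorms $\Lip_\A\circ\rho_\A$ and $\Lip_\B\circ\rho_\B$ are lower semicontinuous and $F$-quasi-Leibniz by hypothesis, while each bridge-norm term inherits the $F$-quasi-Leibniz inequality from the fact that $\alpha^g$ and $\beta^{\varsigma(g)}$ are unital $\ast$-endomorphisms (hence commute with Jordan and Lie products and are contractive), combined with the standard submultiplicative estimates on bridge norms. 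The supremum of $F$-quasi-Leibniz seminorms remains $F$-quasi-Leibniz since $F$ is increasing. The kernel of $\Lip$ reduces to $\R\unit_{\A\oplus\B}$ because all bridge terms vanish on $(t\unit_\A, t\unit_\B)$, and lower semicontinuity is preserved under suprema of lower semicontinuous functions. That $\Kantorovich{\Lip}$ metrizes the weak-$\ast$ topology on $\StateSpace(\A\oplus\B)$ will follow in tandem with the quotient identifications.

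For the quotient identifications, let $a\in\dom{\Lip_\A}$ with $\Lip_\A(a)\leq 1$; I must exhibit $b\in\dom{\Lip_\B}$ with $\Lip(a,b)\leq 1$. The case $j=1, k=2$ of the covariant bridge reach produces $b$ with $\Lip_\B(b)\leq 1$ and $\sup_{g\in G}\bridgenorm{\gamma}{\alpha^g(a),\beta^{\varsigma(g)}(b)}\leq\varepsilon$, which handles the $g$-family of terms in $\Lip(a,b)$. For the $h$-family, I would bound $\bridgenorm{\gamma}{\alpha^{\varkappa(h)}(a),\beta^h(b)}$ by interpolating through $\beta^{\varsigma(\varkappa(h))}(b)$: for $h\in H[1/\varepsilon]$, $\varsigma(\varkappa(h))$ lies within $\varepsilon$ of $h$ by Lemma (\ref{almost-isoiso-lemma})(1c), and then strong continuity of $\beta$ together with local boundedness of $h\mapsto\dil{\beta^h}$ controls the difference. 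This is the main obstacle: the $h$-bound does not follow directly from the covariant bridge reach as defined, so the almost-isometry data must be fed in carefully, and this is where the factor-of-$4$ slack in the conclusion is absorbed. The symmetric argument, using the $j=2, k=1$ case of the covariant bridge reach, gives the analogous identification for $\rho_\B$.

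Finally, the extent of $\tau$ is bounded by twice the bridge height from the classical bridge-to-tunnel computation of \cite{Latremoliere13} --- a state in $\StateSpace_1(\D|x)$ pulls back via $\pi_\A$ and $\pi_\B$ to exhibit mutual density of $\rho_\A^\ast\StateSpace(\A)$ and $\rho_\B^\ast\StateSpace(\B)$ in $\StateSpace(\A\oplus\B)$ --- yielding $\tunnelextent{\tau}\leq 2\varepsilon$. For the reach, fix $\varphi\in\StateSpace(\A)$ and $g\in G[1/(4\varepsilon)]$. Choose $\psi\in\StateSpace(\B)$ with $\Kantorovich{\Lip}(\varphi\circ\rho_\A,\psi\circ\rho_\B)\leq 2\varepsilon$, possible by the extent bound, and then for $(a,b)$ with $\Lip(a,b)\leq 1$ estimate
\begin{equation*}
|\varphi(\alpha^g(a))-\psi(\beta^{\varsigma(g)}(b))|
\end{equation*}
by factoring through a state on $\D$ and using that $\Lip(a,b)\leq 1$ forces $\bridgenorm{\gamma}{\alpha^g(a),\beta^{\varsigma(g)}(b)}\leq\varepsilon$. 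Combined with the extent contribution this yields a reach bound of at most $4\varepsilon$. Taking the maximum gives $\tunnelmagnitude{\tau}{4\varepsilon}\leq 4\varepsilon$, and $\covpropinquity{F}(\mathds{A},\mathds{B})\leq 4\varepsilon$ then follows immediately from Definition (\ref{covariant-propinquity-def}).
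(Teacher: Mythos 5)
Your overall architecture mirrors the paper's (check that $\Lip$ is an $F$-quasi-Leibniz L-seminorm, verify the quotient conditions from the covariant bridge reach, then bound extent and reach), but the step you yourself flag as the main obstacle does not close. Your repair of the $h$-indexed terms --- interpolating through $\beta^{\varsigma(\varkappa(h))}(b)$ and invoking strong continuity of $\beta$ together with local boundedness of $h\mapsto\dil{\beta^h}$ --- yields no quantitative bound: strong continuity carries no modulus, and $\dil{\beta^h}$ controls $\Lip_\B(\beta^h(b))$, not $\norm{\beta^{h'}(b)-\beta^{h}(b)}{\B}$ for $h'$ near $h$, so knowing $\delta_H(\varsigma(\varkappa(h)),h)\leq\varepsilon$ tells you nothing about $\bridgenorm{\gamma}{\alpha^{\varkappa(h)}(a),\beta^h(b)}$. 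Moreover, the idea that this discrepancy is ``absorbed in the factor-of-$4$ slack'' is structurally wrong: for $\tau$ to be a covariant tunnel at all, $\rho_\A$ must be a quantum isometry, i.e. $\Lip_\A(a)=\inf\{\Lip(a,b) : b\in\sa{\B}\}$ exactly; the $4\varepsilon$ lives in the magnitude estimate and cannot compensate a failure of the quotient condition. The paper does not run any continuity argument here: it takes the covariant reach of $\gamma$ as furnishing, for each $a$, a single $b$ with $\Lip_\B(b)\leq\Lip_\A(a)$ making the orbit terms in $\Lip(a,b)$ at most $\varepsilon\Lip_\A(a)$, so that $\Lip(a,b)\leq\Lip_\A(a)$ on the nose (in the application to the tori one has $\varsigma\circ\varkappa=\mathrm{id}$ on $H$, so the $h$-family is literally a subfamily of the $g$-family); the right place to handle your concern is the reach hypothesis on the bridge, not an approximation argument.

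Two further steps do not go through as written. First, you never establish that $\Kantorovich{\Lip}$ metrizes the weak-$\ast$ topology; ``in tandem with the quotient identifications'' is not an argument, since the quotient conditions do not imply the Lip-norm property. The paper gets this by observing that $\Lip$ dominates the bridge seminorm of \cite[Theorem 6.3]{Latremoliere13}, which is a Lip-norm, and then applying Rieffel's comparison theorem \cite[Theorem 1.10]{Rieffel98a}. Second, your reach estimate chooses $\psi$ through the extent and then ``factors through a state on $\D$''; but the extent only compares $\varphi\circ\rho_\A$ and $\psi\circ\rho_\B$ on $\Lip$-bounded elements, and the shifted pair $(\alpha^g(a),\beta^{\varsigma(g)}(b))$ need not be $\Lip$-bounded ($\dil{\alpha^g}$ may exceed $1$, and its bridge terms are not among those defining $\Lip(a,b)$), so the estimate does not follow. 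The paper instead picks $\psi\in\StateSpace_1(\D|x)$ with $\Kantorovich{\Lip_\A}(\varphi,\psi\circ\pi_\A)$ bounded by the bridge height, sets $\psi_\B=\psi\circ\pi_\B$, and uses $\psi(xd)=\psi(dx)=\psi(d)$ to convert the bound $\bridgenorm{\gamma}{\alpha^g(a),\beta^{\varsigma(g)}(b)}\leq\varepsilon\Lip(a,b)$ built into $\Lip$ directly into a $\Kantorovich{\Lip}$ estimate, valid for every $g\in G$; the extent is then obtained from the classical relation between tunnel length and extent (giving $4\bridgelength{\gamma}{\Lip_\A,\Lip_\B}$, not your claimed $2\varepsilon$, which is harmless only because the target is $4\varepsilon$). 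So the skeleton is right, but these three analytic steps need to be replaced by the paper's arguments.
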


\begin{proof}
  By construction, and since $\alpha$ and $\beta$ are actions by unital *-endomorphisms, the seminorm $\Lip$ if $F$-quasi-Leibniz. Moreover, if $(a,b) \in \dom{\Lip_\A}\oplus\dom{\Lip_\B}$ then $(a,b) \in \dom{\Lip}$, and since $\dom{\Lip_\A}\oplus\dom{\Lip_\A}$ is dense in $\sa{\A\oplus\B}$, we conclude that $\dom{\Lip}$ is dense in $\sa{\A\oplus\B}$. Moreover, $\Lip$ is lower-semi-continuous since $\Lip_\A$ and $\Lip_\B$ are lower semi-continuous and $\bridgenorm{\gamma}{\cdot}$ is continuous, as well as the maps $\alpha^g$ and $\beta^h$ for $g\in G$ and $h \in H$.

  We also note that if $\Lip(a,b) = 0$ then $a = t\unit_\A$ and $b = s\unit_\B$ for some $t,s \in \R$. Now $\bridgenorm{\gamma}{a,b} = 0$ as well, so $\norm{(t-s)x}{\D} = 0$ and thus $t = s$. Hence $(a,b) \in \R\unit_{\A\oplus\B}$. Of course $\Lip(\unit_{\A\oplus\B}) = 0$.

  We also note that $\Lip$ dominates the Lip-norm defined in \cite[Theorem 6.3]{Latremoliere13}, and therefore by \cite[Theorem 1.10]{Rieffel98a}, $\Lip$ is a Lip-norm. Hence $\Lip$ is an $F$-quasi-Leibniz L-seminorm.

  If $a\in\sa{\A}$ then $\Lip_\A(a)\leq\Lip(a,b)$ for all $b\in \sa{\B}$. If moreover $\Lip_\A(a)<\infty$, by definition of the reach of $\gamma$, there exists $b \in \B$ with $\Lip_\B(b) \leq \Lip_\A(a)$ and $\bridgenorm{\gamma}{\alpha^g(a),\beta^{\varsigma(g)}(b)} \leq \bridgereach{\gamma}{\Lip_\A,\Lip_\B} \leq\varepsilon$ for all $g \in G$. Thus $\Lip(a,b) \leq \Lip_\A(a)$. The reasoning is symmetric with respect to $\A$ and $\B$ and thus we conclude that $(\A\oplus\B,\Lip,\rho_\A,\rho_\B)$ is a tunnel. By assumption, $\tau$ is a $\varepsilon$-covariant $F$-tunnel.

  Let now $\varphi \in \StateSpace(\A)$. By definition of the length of a bridge, there exists $\psi \in \StateSpace(\D)$ with $\Kantorovich{\Lip}(\varphi,\psi\circ\pi_\A) \leq \bridgelength{\gamma}{\Lip_\A,\Lip_\B}$ and $\psi(x d) = \psi(d x) = \psi(d)$ for all $d\in \D$. We write $\psi_\A = \psi\circ\pi_\A$ and $\psi_\B = \psi\circ\pi_\B$. We also identify all states of $\A$ (resp. $\B$) with states over $\A\oplus\B$ by setting $\mu(a,b) = \mu(a)$ for all $(a,b) \in \A\oplus\B$ and $\mu\in\StateSpace(\A)$.

  Let $g \in G$. Let $(a,b) \in \A\oplus\B$ with $\Lip(a,b) \leq 1$. We then compute:
  \begin{align*}
    \left|\psi_\A\circ\alpha^g (a,b) - \psi_\B\circ\beta^{\varsigma(g)}(a,b) \right| &=  \left|\psi(\pi_\A(\alpha^g (a))) - \psi(\pi_\B(\beta^{\varsigma(g)}(b))) \right|\\
    &=\left|\psi(\pi_\A(\alpha^g (a))x) - \psi(x\pi_\B(\beta^{\varsigma(g)}(b))) \right|\\
    &=\left|\psi(\pi_\A(\alpha^g (a))x - x\pi_\B(\beta^{\varsigma(g)}(b))) \right|\\
    &\leq \bridgenorm{\gamma}{\alpha^g(a),\beta^{\varsigma(g)}(b)}\\
    &\leq \bridgelength{\gamma}{\Lip_\A,\Lip_\B} \text{.}
\end{align*}

Hence $\Kantorovich{\Lip}(\psi_\A\circ\alpha^g,\psi_\B\circ\beta^{\varsigma(g)}) \leq \bridgelength{\gamma}{\Lip_\A,\Lip_\B}$. We conclude that $\Kantorovich{\Lip}(\varphi\circ\alpha^g,\psi_\B\circ\beta^g) \leq 2 \bridgelength{\gamma}{\Lip_\A,\Lip_\B}$. Hence $\tunnelreach{\tau}{r} \leq 2\bridgelength{\gamma}{\Lip_\A,\Lip_\B}$ for all $r > 0$ --- since we actually did our computation for all $g \in G$.

The extent of $\tau$ is easy to compute: the above computation proves the length of the tunnel $(\A\oplus\B,\Lip,\rho_\A,\rho_\B)$ is no more than $2\bridgelength{\gamma}{\Lip_\A,\Lip_\B}$. Hence the extent of $\tau$ is also no more than $4\bridgelength{\gamma}{\Lip_\A,\Lip_\B}$ by \cite[Proposition 2.12]{Latremoliere14}.
Hence $\tunnelmagnitude{\tau}{\frac{1}{4\varepsilon}} \leq 4\varepsilon$. This concludes our proof.
\end{proof}

\begin{remark}[The covariant quantum propinquity]
  Using covariant bridges, we can construct a covariant quantum propinquity in the model of \cite{Latremoliere13}, using treks of covariant bridges using a natural definition (simply chaining the local isometric isomorphisms). Proposition (\ref{cobridge-prop}) would then imply that the resulting pseudo-metric dominates the covariant propinquity, and thus it is a metric.
\end{remark}

We thus have proven:

\begin{theorem}
  \begin{equation*}
    \lim_{\substack{\eta\rightarrow\theta \\ k\rightarrow \infty^d }} \covpropinquity{}\left( \left(C^\ast(\Z^d_k,\eta),\Lip_{k,\eta},\U_k^d,D_\ell,\alpha_{k,\eta}\right), \left(C^\ast(\Z^d,\theta),\Lip_{\infty^d,\theta},\T^d,D_\ell,\alpha_{\infty^d,\theta} \right) \right) = 0 \text{.}
  \end{equation*}
\end{theorem}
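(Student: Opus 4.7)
The plan is to apply Proposition (\ref{cobridge-prop}) to a covariant bridge obtained by adjoining the pair $(\varkappa_k, \varsigma_k)$ of approximate isometries between the dual groups to the bridge $\gamma_{k,\eta}$ already built above. Fix $\varepsilon > 0$. The analytic work needed for the reach of this covariant bridge has effectively been prepared in the preceding computation: there is an open neighborhood $V_1$ of $(\infty^d,\theta)$ in $\Omega$ and a single finite-rank diagonal operator $x \in \B$ commuting with every $u_\lambda$, such that for $(k,\eta)\in V_1$ the bridge $\gamma_{k,\eta} = (\B, x, \pi_{\infty^d,\theta}, \pi_{k,\eta})$ has length at most $\varepsilon/2$ and satisfies the uniform bound
\begin{equation*}
\bridgenorm{\gamma_{k,\eta}}{\alpha_{\infty^d,\theta}^\lambda(a),\alpha_{k,\eta}^{\varkappa_k(\lambda)}(b)} \leq \varepsilon\quad\text{for all } \lambda\in\T^d
\end{equation*}
whenever $a$ on the quantum-torus side has $\Lip_{\infty^d,\theta}(a)\leq 1$ and $b$ is chosen from the reach of $\gamma_{k,\eta}$. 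Shrinking $V_1$ further using equation (\ref{torus-cv-ell-eq}) and the symmetry of $\UIso$ under swapping the roles of the two groups, I may further assume $(\varkappa_k,\varsigma_k)\in\UIso{\varepsilon}{(\T^d,D_\ell)}{(\U_k^d,D_\ell)}{1/\varepsilon}$.

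Next I will assemble the covariant bridge $\gamma^{\mathrm{cov}}_{k,\eta} = (\B, x, \pi_{\infty^d,\theta}, \pi_{k,\eta}, \varkappa_k, \varsigma_k)$ from the quantum-torus Lipschitz dynamical system to the fuzzy-torus Lipschitz dynamical system and check that its length is at most $\varepsilon$. The height is inherited from $\gamma_{k,\eta}$. The reach decomposes into two halves. The half starting from $a\in\sa{C^\ast(\Z^d,\theta)}$ with $\Lip_{\infty^d,\theta}(a)\leq 1$, pairing it with $b$ drawn from the bridge reach of $\gamma_{k,\eta}$, is precisely the displayed uniform estimate. The half starting from $b\in\sa{C^\ast(\Z^d_k,\eta)}$ with $\Lip_{k,\eta}(b)\leq 1$ is easier because now only $z \in \U_k^d \subseteq \T^d$ is involved, so $\varsigma_k(z) = z$ and a short computation using the commutation of $x$ with $u_z$ gives $\bridgenorm{\gamma_{k,\eta}}{\alpha_{\infty^d,\theta}^z(a),\alpha_{k,\eta}^z(b)} = \bridgenorm{\gamma_{k,\eta}}{a,b} \leq \varepsilon/2$ when $a$ is chosen from the reverse-direction reach of $\gamma_{k,\eta}$.

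With these inputs secured, the hypotheses of Proposition (\ref{cobridge-prop}) hold: the actions are by unital $\ast$-automorphisms, $\bridgelength{\gamma^{\mathrm{cov}}_{k,\eta}}{\Lip_{\infty^d,\theta},\Lip_{k,\eta}}\leq\varepsilon$, and $(\varkappa_k,\varsigma_k)$ is a $1/\varepsilon$-local $\varepsilon$-almost isometric isomorphism. The proposition then delivers a bound of $4\varepsilon$ on the covariant propinquity between the two dynamical systems, uniformly over the chosen neighborhood of $(\infty^d,\theta)$ in $\Omega$. Since $\varepsilon>0$ was arbitrary, the claimed limit follows. The main obstacle was the uniform-in-$\lambda$ control on the twisted bridge seminorm, which has already been discharged in the preceding computation; the essential ingredients were the commutation of $x$ with every $u_\lambda$ (enabling a clean conjugation argument), the finite-rank support of $x$ (allowing truncation to $|m_j|\leq N$), and the decay $\norm{(u_z - u_\lambda)x}{\B} \leq \sqrt{d}\max_j 2|\sin(N\pi/k_j)|\to 0$ as $k\to\infty^d$. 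The covariant assembly itself is then essentially formal.
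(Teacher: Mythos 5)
Your proposal is correct and follows essentially the same route as the paper: the paper's proof of this theorem is precisely to apply Proposition (\ref{cobridge-prop}) to the covariant bridge assembled from the bridge $\gamma_{k,\eta}$ of \cite{Latremoliere13c} together with the pair $(\varsigma_k,\varkappa_k)$, with the uniform-in-$\lambda$ estimate on $\bridgenorm{\gamma}{\alpha_\theta^\lambda(a),\alpha_\eta^{\varkappa_k(\lambda)}(b)}$ carried out beforehand exactly as you describe. Your additional remarks (the symmetric half of the covariant reach via $\varsigma_k(z)=z$ and the commutation of $x$ with $u_z$, and shrinking the neighborhood so that $(\varsigma_k,\varkappa_k)$ is an $\frac{1}{\varepsilon}$-local $\varepsilon$-almost isometric isomorphism for $D_\ell$) are exactly the details the paper leaves implicit in its one-line proof.
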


\begin{proof}
  Our theorem follows from Proposition (\ref{cobridge-prop}) applied to the construction outlined in this section.
\end{proof}

\bibliographystyle{amsplain}
\bibliography{../thesis}
\vfill

\end{document}